\documentclass[11pt]{amsart}
\usepackage{amsmath} 
\usepackage{amssymb} 
\usepackage{amsthm}
\usepackage{amsfonts}            
\usepackage{mathrsfs}
\usepackage[all]{xy}
\usepackage{mathtools}
\usepackage{mathabx}
\usepackage{colonequals}

\usepackage{amsmath} 
\usepackage{amssymb} 
\usepackage{amsthm}
\usepackage{amsfonts}            
\usepackage{mathrsfs}
\usepackage{amsthm,amsfonts,amssymb,amsmath,amsxtra,mathtools}
\usepackage[all]{xy}
\usepackage{mathtools}
\usepackage{tikz-cd}
\usepackage{mathabx}
\usepackage{colonequals}
\usepackage{xcolor}
\usepackage[pagebackref]{hyperref} 
\hypersetup{  
	colorlinks=true,
	linkcolor=black,
	citecolor=black,
	urlcolor=magenta,
}

\usepackage{geometry}
\usepackage{setspace}

\geometry{left=25mm, right=25mm, top=25mm, bottom=25mm}

\title{The Regularity of difference divisors}
\author{Baiqing Zhu}
\date{May 2024}
\setcounter{tocdepth}{1}
\setcounter{secnumdepth}{3}

\begin{document}
\theoremstyle{definition}        
\newtheorem{definition}{Definition}[subsection] 
\newtheorem{example}[definition]{Example}
\newtheorem{remark}[definition]{Remark}

\theoremstyle{plain}
\newtheorem{theorem}[definition]{Theorem}
\newtheorem{lemma}[definition]{Lemma}         
\newtheorem{proposition}[definition]{Proposition}
\newtheorem{corollary}[definition]{Corollary}

\newcommand{\ofb}{\mathcal{O}_{\Breve{F}}}
\newcommand{\zpb}{\Breve{\mathbb{Z}}_{p}}
\newcommand{\zp}{\mathbb{Z}_{p}}
\newcommand{\qpb}{\Breve{\mathbb{Q}}_{p}}
\newcommand{\fb}{\Breve{F}}
\newcommand{\vcrys}{\mathbf{V}_{\textup{crys}}}
\newcommand{\vcrysz}{\mathbf{V}_{\textup{crys},z}}
\newcommand{\bpi}{\boldsymbol{\pi}}
\newcommand{\testleftlong}{\longleftarrow\!\shortmid}

\begin{abstract}
    For a prime number $p>2$ and a finite extension $F/\mathbb{Q}_p$, we explain the construction of the difference divisors on the unitary Rapoport-Zink spaces of hyperspecial level over $\ofb$, and the GSpin Rapoport-Zink spaces of hyperspecial level over $\zpb$ associated to a minuscule cocharacter $\mu$ and a basic element $b$. We prove the regularity of the difference divisors, find the formally smooth locus of both the special cycles and the difference divisors, by a purely deformation-theoretic approach.
\end{abstract}
\maketitle

\pagenumbering{roman}
\tableofcontents
\newpage
\pagenumbering{arabic}

\section{Introduction}
\subsection{Background}
Let $n\geq2$ be a positive integer. Let $\mathcal{N}_{1,n-1}$ be the unitary Rapoport-Zink space of signature $(1,n-1)$ with hyperspecial level. Kudla-Rapoport special divisors $\mathcal{Z}(x)$ on $\mathcal{N}_{1,n-1}$ have been defined and extensively studied in the works of Kudla and Rapoport \cite{KR11}. Then Terstiege introduced the difference divisors on the formal scheme $\mathcal{N}_{1,n-1}$ in \cite{Ter13a}, he also proved the regularity of difference divisors in \cite{Ter13b}. A key step in the proof is a previous result in his joint work with Rapoport and Zhang \cite[Theorem 10.7]{RTZ13}, whose proof is based on the windows theory developed by Zink in \cite{Zin01} and \cite{Zin02}.
\par
For a self-dual quadratic lattice $V$ of rank $m=n+1$ over $\zp$, a Hodge type Rapoport-Zink space $\mathcal{N}(V)$ associated to a minuscule cocharacter $\mu$ and a basic element $b$ has been constructed and studied in the works of Kim \cite{Kim18} and Howard-Pappas \cite{HP17}. When $V$ is one of the self-dual lattices of rank 4, Terstiege defined difference divisors on the formal scheme $\mathcal{N}(V)$ and studied the intersection numbers of them in \cite{Ter11}, where he also proved the regularity of these difference divisors. The notion of difference divisors can be defined similarly on $\mathcal{N}(V)$ for general $V$, but the regularity of these difference divisors was previously not known to us. It's difficult to prove a similar result as \cite[Theorem 10.7]{RTZ13} since the $p$-divisible groups parameterized by the formal scheme $\mathcal{N}(V)$ have dimension $2^{n}$ and height $2^{n+1}$, so the windows theory becomes much more complicated in this case.

\subsection{Main results}
Let $p$ be an odd prime. Let $\mathbb{F}$ be the algebraic closure of the finite field $\mathbb{F}_p$. Let $n\geq2$ be an integer. Let $F/\mathbb{Q}_p$ be a finite extension with uniformizer $\varpi$ in its ring of integers $\mathcal{O}_{F}$. Let $\Breve{F}$ be the completion of the maximal unramified extension of $F$.
\par
In the unitary case, let $E/F$ be an unramified extension. The unitary Rapoport-Zink space $\mathcal{N}_{1,n-1}$ is a formally smooth formal scheme over $\textup{Spf}\,\ofb$ parameterizing hermitian formal $\mathcal{O}_{E}$-modules of signature $(1,n-1)$ within the supersingular quasi-isogeny class ($\S$\ref{rz-unitary}). Denote by $\mathcal{N}^{u}$ the formal scheme $\mathcal{N}_{1,n-1}$ for simplicity. Let $\overline{\mathbb{Y}}$ be the unique (up to isomorphism) hermitian $\mathcal{O}_{E}$-module of signature $(0,1)$ over $\mathbb{F}$. Let $\mathbb{X}$ be the framing hermitian $\mathcal{O}_{E}$-module of signature $(1,n-1)$ over $\mathbb{F}$. The space of space quasi-homomorphisms $\mathbb{V}^{u}\coloneqq\textup{Hom}^{\circ}_{\mathcal{O}_{E}}(\overline{\mathbb{Y}},\mathbb{X})$ carries a natural $E/F$-hermitian form $(\cdot,\cdot)$ ($\S$\ref{special-unitary}).
\par
In the GSpin case, let $V$ be a self-dual quadratic lattice of rank $m=n+1$ over $\zp$. Associated to $V$ we have a local unramified Shimura-Hodge datum $(G,b,\mu,C(V))$, where $G = \textup{GSpin}(V)$, $b\in G(\qpb)$ is a basic element, $\mu: \mathbb{G}_{m}\rightarrow G$ is a certain cocharacter, and $C(V)$ is the Clifford algebra of $V$. This local unramified Shimura-Hodge data gives rise to a supersingular $p$-divisible group $\mathbb{X}_{V}$, and a GSpin Rapoport-Zink space $\textup{RZ}(G, b, \mu, C(V))$ of Hodge type, parametrizing $p$-divisible groups quasi-isogenous to $\mathbb{X}_V$ with crystalline Tate tensors . It is a formally smooth formal scheme over $\zpb$ ($\S$\ref{gspin-rz}). Let $\mathcal{N}(V)$ be a connected component of the formal scheme $\textup{RZ}(G, b, \mu, C(V))$. The space of special quasi-homomorphisms $\mathbb{V}^{o}\coloneqq(V\otimes_{\zp}\qpb)^{\overline{b}\circ\sigma}$ can be identified as a subspace of $\textup{End}^{\circ}(\mathbb{X}_{V})$ ($\S$\ref{special-gspin}), where $\overline{b}$ is the image of $b$ under the homomorphism $G(\qpb)\rightarrow\textup{SO}(V)(\qpb)$, $\sigma\in\textup{Gal}(\Breve{\mathbb{Q}}_p/\mathbb{Q}_p)$ is the arithmetic Frobenius automorphism on $\Breve{\mathbb{Q}}_p$. The $\mathbb{Q}_p$-vector space $\mathbb{V}^{o}$ carries a natural quadratic form $q_{\mathbb{V}^{o}}$ induced from $V\otimes_{\zp}\qpb$. Denote by $\mathcal{N}^{o}$ the formal scheme $\mathcal{N}(V)$.
\par
For any subset $L\subset\mathbb{V}^{u}$ (resp. $L\subset\mathbb{V}^{o}$). The special cycle $\mathcal{Z}(L)$ is a closed formal subscheme of $\mathcal{N}^{u}$ (resp. $\mathcal{N}^{o}$), over which each quasi-homomorphism $x \in L$ deforms to homomorphisms. When $L=\{x\}$ for a nonzero element $x\in\mathbb{V}^{u}$ (resp. $x\in\mathbb{V}^{o}$). The special cycle $\mathcal{Z}(x)\coloneqq\mathcal{Z}(\{x\})$ is a Cartier divisor on the formal scheme $\mathcal{N}^{u}$ (resp. $\mathcal{N}^{o}$) ($\S$\ref{special-cycles}). There is a closed immersion of divisors $\mathcal{Z}(\varpi^{-1}x)\rightarrow\mathcal{Z}(x)$ (resp. $\mathcal{Z}(p^{-1}x)\rightarrow\mathcal{Z}(x)$), define the difference divisor associated to $x$ to be the following Cartier divisor on $\mathcal{N}^{u}$ (resp. $\mathcal{N}^{o}$),
\begin{equation*}
        \mathcal{D}(x)\coloneqq\mathcal{Z}(x)-\mathcal{Z}(\varpi^{-1}x)\,\,\,\,\textup{(resp. $\mathcal{D}(x)\coloneqq\mathcal{Z}(x)-\mathcal{Z}(p^{-1}x)$)}.
\end{equation*}
This definition recovers Terstiege's definition of difference divisors in \cite{Ter13b} (unitary case, $F=\mathbb{Q}_p$), and in \cite{Ter11} (GSpin case, $V$ is a specific rank $4$ self-dual lattice over $\mathbb{Z}_p$).
The main results of this article are stated in the following theorem (Theorem \ref{main-results} and Corollary \ref{finalproof}).
\begin{theorem}
Let $x\in\mathbb{V}^{u}$ (resp. $x\in\mathbb{V}^{o}$) be a nonzero special quasi-homomorphism such that $(x,x)\in\mathcal{O}_{E}$ (resp. $q_{\mathbb{V}^{o}}(x)\in \zp$).
\begin{itemize}
    \item[(i)] There is an equality $\mathcal{D}(x)(\mathbb{F})=\mathcal{Z}(x)(\mathbb{F})$.
    \item[(ii)] The difference divisor $\mathcal{D}(x)$ is a regular formal scheme. It is formally smooth over $\ofb$ (resp. $\zpb$) of relative dimension $n-2$ at a point $z\in\mathcal{D}(x)(\mathbb{F})$ if and only if $z\notin\mathcal{Z}(\varpi^{-1}x)(\mathbb{F})$ (resp. $z\notin\mathcal{Z}(p^{-1}x)(\mathbb{F})$) and there exists a lift of $z$ to $z^{\prime}\in\mathcal{Z}(x)(\ofb/\varpi^{2})$ (resp. $z^{\prime}\in\mathcal{Z}(x)(\zpb/p^{2})$).
    \item[(iii)] Let $z\in\mathcal{Z}(x)(\mathbb{F})$ be a point. Let $\mathcal{D}(x)_{\mathbb{F}}=\mathcal{D}(x)\times_{\ofb}\mathbb{F}$ and $\mathcal{Z}(x)_{\mathbb{F}}=\mathcal{Z}(x)\times_{\ofb}\mathbb{F}$ (resp. $\mathcal{D}(x)_{\mathbb{F}}=\mathcal{D}(x)\times_{\zpb}\mathbb{F}$ and $\mathcal{Z}(x)_{\mathbb{F}}=\mathcal{Z}(x)\times_{\zpb}\mathbb{F}$). Then 
    \begin{equation*}
        \textup{Tgt}_z(\mathcal{D}(x)_{\mathbb{F}})=\textup{Tgt}_z(\mathcal{Z}(x)_{\mathbb{F}}),
    \end{equation*}
    where $\textup{Tgt}_z(\mathcal{D}(x)_{\mathbb{F}})$ and $\textup{Tgt}_z(\mathcal{Z}(x)_{\mathbb{F}})$ are the tangent spaces of the formal schemes $\mathcal{D}(x)_{\mathbb{F}}$ and $\mathcal{Z}(x)_{\mathbb{F}}$ at the point $z$, respectively.
\end{itemize}
\label{main-intro}
\end{theorem}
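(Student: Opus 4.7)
The plan is to work locally in the complete local ring $R$ of the Rapoport--Zink space at a point $z\in\mathcal{Z}(x)(\mathbb{F})$. Since $\mathcal{N}^u$ (resp.\ $\mathcal{N}^o$) is formally smooth of relative dimension $n-1$ over $\ofb$ (resp.\ $\zpb$), one has $R \cong \ofb[[t_1,\dots,t_{n-1}]]$ (resp.\ $\zpb[[t_1,\dots,t_{n-1}]]$), with the $t_i$ parametrising deformations of the Hodge filtration on the universal Dieudonn\'e module. Grothendieck--Messing theory identifies $\mathcal{Z}(x)$ with the locus where the crystalline extension of $x$ sends the deformed Hodge filtration into itself; this yields an element $f_x\in R$ cutting out $\mathcal{Z}(x)$ and, analogously, an element $f_{\varpi^{-1}x}\in R$ cutting out $\mathcal{Z}(\varpi^{-1}x)$, together with a factorisation $f_x = h\cdot f_{\varpi^{-1}x}$ whose vanishing locus is $\mathcal{D}(x)$.

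The central computation is to pin down $f_x$ and $f_{\varpi^{-1}x}$ modulo $\mathfrak{m}_R^2$. In the unitary case, choosing a basis of the Dieudonn\'e module at $z$ adapted to the Hodge filtration, I expect a linear normal form $f_x \equiv \beta - \sum_{i=1}^{n-1}\alpha_i t_i \pmod{\mathfrak{m}_R^2}$, where $\alpha_i,\beta\in\ofb$ record the coefficients of $\bar x(\bar e)$ in the chosen basis and where $z\in\mathcal{Z}(x)(\mathbb{F})$ forces $\beta\in\varpi\ofb$. The divisibility pattern of the $\alpha_i$'s then controls the structure of $\mathcal{Z}(\varpi^{-1}x)$ near $z$: if some $\alpha_i$ is a unit then $f_{\varpi^{-1}x}$ is a unit locally and $(h)=(f_x)$, whereas if all $\alpha_i\in\varpi\ofb$ then $f_{\varpi^{-1}x}$ is obtained by dividing through by $\varpi$ and $h\in(\varpi)+\mathfrak{m}_R^2$. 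In the GSpin case the same computation should be carried out inside the Clifford module, with the ``Hodge vector'' and its complement replaced by data determined by the crystalline Tate tensors that cut $\mathcal{N}^o$ out of the ambient Hodge-type Rapoport--Zink space.

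Granting such a normal form, parts (i)--(iii) follow cleanly. For (i), in both cases above $h\in\mathfrak{m}_R$, so $z\in\mathcal{D}(x)(\mathbb{F})$. For (ii), the normal form puts $h$ locally into a regular system of parameters of $R$, yielding regularity of $\mathcal{D}(x)$; formal smoothness over $\ofb$ of relative dimension $n-2$ at $z$ is equivalent to $h\notin\mathfrak{m}_R^2+(\varpi)$, which the normal form translates precisely into the conjunction of $z\notin\mathcal{Z}(\varpi^{-1}x)(\mathbb{F})$ (so that the $t_i$-linear part of $h$ survives modulo $\varpi$) and the existence of a lift to $\mathcal{Z}(x)(\ofb/\varpi^2)$ (so that the $\varpi$-coefficient does not land in the span of the linear part). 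For (iii), when $z\in\mathcal{Z}(\varpi^{-1}x)(\mathbb{F})$ the factorisation $f_x = h\cdot f_{\varpi^{-1}x}$ forces $f_x\in\mathfrak{m}_R^2$ and the normal form puts $h\in(\varpi)+\mathfrak{m}_R^2$, so both $\mathcal{Z}(x)_{\mathbb{F}}$ and $\mathcal{D}(x)_{\mathbb{F}}$ attain the full tangent space of $\mathcal{N}_{\mathbb{F}}$ at $z$; outside $\mathcal{Z}(\varpi^{-1}x)(\mathbb{F})$ the two divisors coincide locally and the tangent-space equality is tautological.

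The principal obstacle I anticipate is carrying out this normal-form computation in the GSpin case. The framing $p$-divisible group $\mathbb{X}_V$ has dimension $2^n$ and height $2^{n+1}$, so its universal deformation is significantly more intricate than in the unitary case, and one cannot import the window-theoretic input of \cite[Theorem 10.7]{RTZ13} used in Terstiege's original arguments. The Grothendieck--Messing analysis must be performed inside the Clifford module while respecting all the crystalline Tate tensors cutting out $\mathcal{N}^o$; isolating the correct ``first-order'' direction playing the role of the generator $\bar e$ of the Hodge filtration of $\overline{\mathbb{Y}}$ in the unitary case is the delicate technical step. Once that is in place, all three parts of the theorem should follow uniformly across the unitary and GSpin settings.
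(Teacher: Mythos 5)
The proposal identifies the right local framework (working in $R \cong \ofb[[t_1,\dots,t_{n-1}]]$, obtaining $f_x$ from Grothendieck--Messing theory, factoring $f_x = h\cdot f_{\varpi^{-1}x}$) but the central claim --- that a ``linear normal form'' $f_x \equiv \beta - \sum_i\alpha_i t_i \pmod{\mathfrak m_R^2}$ lets you read off that $h\equiv(\text{unit})\cdot\varpi\pmod{\mathfrak m_R^2}$ when $z\in\mathcal Z(\varpi^{-1}x)(\mathbb F)$ --- has a genuine gap. In that case \emph{both} $h$ and $f_{\varpi^{-1}x}$ lie in $\mathfrak m_R$, so $f_x\in\mathfrak m_R^2$ and its linear normal form is simply zero. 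Knowing $f_x$ to first order then carries \emph{no} information about $h$ to first order, and in particular does not rule out $h\in\mathfrak m_R^2$, which is exactly what needs to be excluded for regularity. The heuristic ``divide through by $\varpi$'' to produce $f_{\varpi^{-1}x}$ is also not available: $f_{\varpi^{-1}x}$ is an independently-defined local equation with only the divisibility $f_{\varpi^{-1}x}\mid f_x$, and in the unitary case the two possible relationships between the divisor condition and the crystalline integrality of $\varpi^{-1}x_{\textup{crys},z}$ (the quantity the paper calls $l_{z,x}-m_{z,x}$, which can be $0$ or $1$; see Example \ref{difference1}) show that this naive division cannot be correct in general.

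What the paper does instead is to evaluate the local equations at \emph{every} $\ofb$-valued lift $\tilde z\in\widehat{\mathcal N}^u_z(\ofb)$, using Proposition \ref{orderlemma} to identify $\nu_\varpi\bigl(\tilde z^\sharp(f_x)\bigr)$ with the valuation of the pairing $\langle x_{\textup{crys},z}, l_{\tilde z}\rangle$, and then to feed the resulting Cartesian-diagram constraint for the whole tower $\{\mathcal Z(\varpi^a x)\}_{a\geq 0}$ into the purely commutative-algebra Lemma \ref{technical}. It is precisely this evaluation over all integral lifts --- not a single first-order Taylor expansion --- that forces $d_{\varpi^a x}\equiv(\text{unit})\cdot\varpi\pmod{\mathfrak m_R^2}$ for all $a\geq 1$, and hence regularity. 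Your proposal has no substitute for this step. (As a secondary point, the GSpin difficulty you flag is already handled cleanly by the deformation result in Lemma \ref{deformo}, which is of Grothendieck--Messing type and furnishes the same kind of pairing statement as the unitary case; the real additional work in the GSpin case is the strong divisibility of Lemma \ref{strongdiv}, used in the smoothness criterion Lemma \ref{smoothcycleo}, not the Clifford-module bookkeeping you anticipate.) To salvage the argument you would need, at minimum, the analogue of Lemma \ref{technical} together with the identity of Proposition \ref{orderlemma}, applied to the whole sequence of difference divisors rather than to $\mathcal D(x)$ and $\mathcal D(\varpi x)$ alone.
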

We give some examples of the equality $\mathcal{D}(x)(\mathbb{F})=\mathcal{Z}(x)(\mathbb{F})$ in Theorem \ref{main-intro} (i). See also Example \ref{d=z}.
\begin{example}
    \begin{itemize}
        \item [(a)] The orthogonal case: Let $H$ be the unique rank $4$ self-dual quadratic lattice over $\zp$ whose discriminant is not a square in $\zp^{\times}$. Let $x\in\mathbb{V}^{o}$ be a non-isotropic vector and $n=\nu_p(q_{\mathbb{V}^{o}}(x))$. Let $x^{\prime}=p^{-[n/2]}\cdot x$. Let $\mathscr{B}$ be the Bruhat-Tits building of $\textup{PGL}_2(\mathbb{Q}_{p^{2}})$. The element $x^{\prime}$ induces an automorphism of $\mathscr{B}$. Let $\mathscr{B}^{x^{\prime}}$ be the fixed set. For a vertex $[\Lambda]$ in the Bruhat-Tits building $\mathscr{B}$ of $\textup{PGL}_2(\mathbb{Q}_{p^{2}})$, let $d_{[\Lambda]}$ be the distance of $[\Lambda]$ to $\mathscr{B}^{x^{\prime}}$.
        \par
        Let $\mathcal{Z}(x)_{\mathbb{F}}=\mathcal{Z}(x)\times_{\zpb}\mathbb{F}$ and $\mathcal{D}(x)_{\mathbb{F}}=\mathcal{D}(x)\times_{\zpb}\mathbb{F}$. By \cite[Theorem 3.6]{Ter11}, we have the following equality of effective Cartier divisors on $\mathcal{N}^{o}_{\mathbb{F}}\coloneqq\mathcal{N}^{o}\times_{W}\mathbb{F}$,
        \begin{equation*}
        \mathcal{Z}(x)_{\mathbb{F}}=\sum\limits_{[\Lambda]:d_{[\Lambda]}\leq(n-1)/2}(1+p+\cdots+p^{(n-1)/2-d_{[\Lambda]}})\cdot\mathbb{P}_{[\Lambda]}+p^{n/2}\cdot s,
        \end{equation*}
        where $\mathbb{P}_{[\Lambda]}$ is a divisor associated to the vertex $[\Lambda]$ and isomorphic to the projective line $\mathbb{P}_{\mathbb{F}}^{1}$. The symbol $s$ represents another divisor which only depends on the element $x^{\prime}$ and is nonempty only if $n$ is even. Since $\mathcal{D}(x)=\mathcal{Z}(x)-\mathcal{Z}(p^{-1}x)$, we have
        \begin{equation*}
            \mathcal{D}(x)_{\mathbb{F}}=\begin{cases}
                \sum\limits_{[\Lambda]:d_{[\Lambda]}=(n-1)/2}\mathbb{P}_{[\Lambda]}+\sum\limits_{[\Lambda]:d_{[\Lambda]}\leq(n-3)/2}p^{(n-1)/2-d_{[\Lambda]}}\cdot\mathbb{P}_{[\Lambda]}+(p^{n/2}-p^{(n-2)/2})\cdot s, &\textup{if $n\geq1$;}\\
                s,&\textup{if $n=0$.}
            \end{cases}
        \end{equation*}
        Therefore $\mathcal{D}(x)(\mathbb{F})=\mathcal{Z}(x)(\mathbb{F})$.
        \item[(b)] The unitary case: Let $n=3$ and $F=\mathbb{Q}_p$. For an element $x\in\mathbb{V}^{u}$, the proof of (\cite[Proposition 2.13]{Ter13a}) implies that if $z\in\mathcal{Z}(x)(\mathbb{F})$, then the local equation of the difference divisor $\mathcal{D}(x)$ is not a unit at the point $z$, hence $\mathcal{Z}(x)(\mathbb{F})\subset\mathcal{D}(x)(\mathbb{F})$. Therefore $\mathcal{D}(x)(\mathbb{F})=\mathcal{Z}(x)(\mathbb{F})$.
    \end{itemize}
\end{example}

\subsection{Strategy and novelty of the proof}
We prove Theorem \ref{main-intro} by a purely deformation-theoretic approach. 
\par
In the unitary case, we have the Grothendieck-Messing deformation theory of hermitian $\mathcal{O}_{E}$-modules of signature $(1,n-1)$. In the GSpin case, similar deformation results (Lemma \ref{deformo}) are also available in Madapusi's work \cite[Proposition 5.16]{Mad16} (see also the works of Li and Zhang \cite[Lemma 4.6.2]{LZ22b}). 
\par
We prove a commutative algebra result Lemma \ref{technical} which helps us to identify a sequence of regular elements in a formally finitely generated and formally smooth algebra over a discrete valuation ring.
\par
Our final proof combines the deformation theory and the commutative algebra results. In stead of analyzing one single difference divisor, we study a sequence of difference divisors $\{\mathcal{D}(\varpi^{a}x)\}_{a\geq0}$ (resp. $\{\mathcal{D}(p^{a}x)\}_{a\geq0}$). We show that the local equations of difference divisors in this sequence satisfy the assumptions in Lemma \ref{technical}. The verification is a series of deformation-theoretic computations based on Lemma \ref{deformu} and Lemma \ref{deformo}, the computations are much easier compared to windows theory and works for general finite extension $F/\mathbb{Q}_p$ since both two lemmas are of Grothendieck-Messing type.

\subsection{Applications}
\subsubsection{Linear invariance}
Li and Zhang \cite[Lemma 2.8.1]{LZ22a} proved the linear invariance of the derived intersections on the Rapoport-Zink space $\mathcal{N}^{u}$ based on the regularity of difference divisors, which guarantees that the local arithmetic intersection numbers on $\mathcal{N}^{u}$ are well-defined. 
\par
The parallel result in the GSpin case is proved by Terstiege \cite[Lemma 4.1, Proposition 4.2]{Ter11} (see also \cite[Lemma 4.11.1]{LZ22b}) using globalization. Our result can be applied to give an alternative proof of the linear invariance of the derived intersections on the Rapoport-Zink space $\mathcal{N}^{o}$ along the lines of \cite[Lemma 2.8.1]{LZ22a} (see \cite[foodnote 3]{LZ22b}),  which guarantees that the local arithmetic intersection numbers on $\mathcal{N}^{o}$ are well-defined.
\subsubsection{Rapoport-Zink spaces with level structures}
In some cases, difference divisors can be identified with Rapoport-Zink spaces with certain level structures. We give several examples.
\begin{itemize}
    \item[(a)] When the valuation of the special quasi-homomorphism $x\in\mathbb{V}^{u}$ (resp. $x\in\mathbb{V}^{o}$) has valuation $0$, i.e., $\nu_{\varpi}((x,x))=0$ (resp. $\nu_p(q_{\mathbb{V}^{o}}(x))=0$). Then the difference divisor $\mathcal{D}(x)=\mathcal{Z}(x)$ is isomorphic to the unitary (resp. GSpin) Rapoport-Zink space of hyperspecial level of one dimension lower (\cite[Remark 4.5]{RSZ} and \cite[Lemma 3.2.2]{LZ17}).\\
        In the GSpin case, when the valuation of the special quasi-homomorphism $x\in\mathbb{V}^{o}$ has valuation $1$, i.e., $\nu_p(q_{\mathbb{V}^{o}}(x))=1$. Then the difference divisor $\mathcal{D}(x)=\mathcal{Z}(x)$ is isomorphic to the GSpin Rapoport-Zink space of almost self-dual level of one dimension lower (\cite[Theorem 4.5]{Oki20}).
    \item[(b)] In the unitary case, let $n=2$. Then $\mathcal{N}^{u}\simeq\textup{Spf}\,\ofb[[t]]$. Let $x\in\mathbb{V}^{u}$ be a special quasi-homomorphism such that $\nu_{\varpi}((x,x))=k\geq0$. Then the difference divisor $\mathcal{D}(x)$ is isomorphic to the quasi-canonical lifting of level $k$ with respect to the unramified extension $E/F$ (\cite[Proposition 8.1]{KR11} for the case $F=\mathbb{Q}_p$ which also works for general field $F$).
    \\
    In the GSpin case, we take $V=\textup{M}_2(\zp)^{\textup{tr}=0}$ equipped with the quadratic form given by the determinant. Then $\mathcal{N}^{o}\simeq\textup{Spf}\,\zpb[[t]]$. Let $x\in\mathbb{V}^{o}$ be a special quasi-homomorphism such that $\nu_{p}(q_{\mathbb{V}^{o}}(x))=k\geq0$. Then the difference divisor $\mathcal{D}(x)$ is isomorphic to the quasi-canonical lifting of level $[k/2]$ with respect to an unramified extension (when $k$ is even) or ramified extension (when $k$ is odd) (\cite[(5.10)]{GK93}, see also \cite[$\S$5.1]{LZ22b}).
    \item[(c)] Let $V=\textup{M}_2(\mathbb{Z}_p)$ with the quadratic form given by the determinant. It is a self-dual $\mathbb{Z}_p$-lattice of rank $4$. In this case, we have $\mathcal{N}^{o}\simeq\textup{Spf}\,\zpb[[t_1,t_2]]$, $\mathbb{V}^{o}$ is the unique division quaternion algebra over $\mathbb{Q}_p$. Let $x\in\mathbb{V}^{o}$ be an element such that $\nu_p(q_{\mathbb{V}^{o}}(x))=k$ for some integer $k\geq0$. The difference divisor $\mathcal{D}(x)$ on $\mathcal{N}^{o}$ is isomorphic to the Rapoport-Zink space associated with the modular curve $\mathcal{X}_0(p^{k})$ (cf. \cite[Theorem 6.2.3]{Zhu23}). A key ingredient in the proof is the regularity of the difference divisor $\mathcal{D}(x)$. Notice that the definition of difference divisor is relatively simple, while the Drinfeld level structure defining the modular curve $\mathcal{X}_0(p^{k})$ (cf. \cite{KM85}, see also \cite{Dr74}) is much more subtle.
\end{itemize}

\subsection{The structure of the paper}
In $\S$\ref{review}, we review the construction and basic properties of the unitary and GSpin Rapoport-Zink spaces. We also give the definition of the difference divisor. In $\S$\ref{def}, we study the deformation theory of the the unitary and GSpin Rapoport-Zink spaces and special cycles on them at a given $\mathbb{F}$-point. The main results are Lemma \ref{deformu} and Lemma \ref{deformo}. In $\S$\ref{smooth}, we find the formally smoothness locus of a special cycle in the deformation-theoretic language compatible with $\S$\ref{def}. In $\S$\ref{regularity-part}, we first prove a commutative algebra result (Lemma \ref{technical}), then we apply it to prove Theorem \ref{main-intro}.

\subsection{Acknowledgments}
The author is grateful to Professor Michael Rapoport and Professor Chao Li for their careful reading of the original manuscript and many helpful comments. The author is also grateful to referees for careful reading and very helpful suggestions. The author is supported by the Department of Mathematics at Columbia University in the city of New York.

\subsection{Notations}
Throughout this article, we fix an odd prime $p$. Let $F/\mathbb{Q}_{p}$ be a finite extension, with ring of integers $\mathcal{O}_{F}$, and uniformizer $\varpi$. Let $\mathbb{F}$ be the algebraic closure of $\mathbb{F}_p$. Let $\fb$ be the completion of the maximal unramified extension of $F$. Let $\sigma\in\textup{Gal}(\Breve{F}/F)$ be the arithmetic Frobenius automorphism of the field $\fb$. Let $\ofb$ be the integer ring of $\fb$. 
\par
For a scheme or formal scheme $S$ over $\ofb$, denote by $\textup{NCRIS}_{\ofb}(S/\textup{Spec}\,\ofb)$ the big fppf nilpotent crystalline site of $S$ over $\ofb$ (cf. \cite[Definition B.5.7.]{FGL08}), the definition is the same as the crystalline site defined in the works of Berthelot, Breen and Messing \cite[$\S$1.1.1.]{BBM82} except that we replace the pd-structure by $\mathcal{O}_F$-pd-structure \cite[Definition B.5.1.]{FGL08} (Notice that they are equivalent when $F=\mathbb{Q}_p$). Denote by $\mathcal{O}_{S}^{\textup{crys}}$ the structure sheaf in this site. For a point $z\in S(\mathbb{F})$, let $\widehat{\mathcal{O}}_{S,z}$ be the complete local ring of $S$ at the point $z$. Let $\textup{Nilp}_{\ofb}$ be the category of $\ofb$-schemes on which $\varpi$ is locally nilpotent. For an object $S$ in $\textup{Nilp}_{\ofb}$, we use $\overline{S}$ to denote the scheme $S\times_{\ofb}\mathbb{F}$. 
\par
Denote by $\textup{Alg}_{\ofb}$ the category of noetherian $\varpi$-adically complete $\ofb$-algebras. Denote by $\textup{Art}_{\ofb}$ the category of local artinian $\ofb$-algebras with residue field $\mathbb{F}$. Denote by $\textup{ANilp}_{\ofb}^{\textup{fsm}}$ be the category of noetherian adic $\ofb$-algebras, in which $\varpi$ is nilpotent, which are formally finitely generated and formally smooth over $\ofb/\varpi^{k}$ for some $k\geq1$.

\section{Special cycles on Rapoport-Zink spaces}
\label{review}
\subsection{Rapoport-Zink spaces}
\subsubsection{Unitary Rapoport-Zink space $\mathcal{N}^{u}$}
\label{rz-unitary}
Let $E$ be the unramified quadratic extension of $F$, and $\mathcal{O}_{E}$ be the integer ring of $E$. Fix an $\mathcal{O}_F$-algebra embedding $\phi_{0} : \mathcal{O}_{E}\rightarrow \ofb$ and denote by $\phi_{1}$ the embedding $\sigma\circ\phi_{0}: \mathcal{O}_{E}\rightarrow \ofb$. The embedding $\phi_{0}$ induces an embedding between the residue fields $\mathbb{F}_{q^{2}}\rightarrow \mathbb{F}$, which we shall think of as the natural embedding.
\par
Let $n$ be a positive integer. For an $\ofb$-scheme $S$, a hermitian $\mathcal{O}_{E}$-module of signature $(1, n-1)$ over an object $S$ in $\textup{Nilp}_{\ofb}$ is a triple $(X,\iota,\lambda)$ where
\begin{itemize}
    \item[(1)] $X$ is a formal $\varpi$-divisible group over $S$ of relative height $2n$ and dimension $n$;
    \item[(2)] $\iota : \mathcal{O}_{E} \rightarrow \textup{End}(X)$ is an action satisfying the signature $(1, n-1)$ condition, i.e., for $\alpha\in\mathcal{O}_{E}$,
\begin{equation*}
    \textup{char}(\iota(\alpha) : \textup{Lie}\,X)(T) = (T- \phi_{0}(\alpha))(T - \phi_{1}(\alpha))^{n-1} \in \mathcal{O}_{S}[T];
\end{equation*}
\item[(3)] $\lambda : X \rightarrow X^{\vee}$ is a principal polarization such that the associated Rosati involution induces $\alpha \mapsto \sigma(\alpha)$ on $\mathcal{O}_{E}$ via $\iota$.
\end{itemize}
\par
Up to $\mathcal{O}_{E}$-linear quasi-isogeny compatible with the polarizations, there is a unique such triple $\mathbb{X}=(\mathbb{X}, \iota_{\mathbb{X}}, \lambda_{\mathbb{X}})$ over $S=\textup{Spec}\,\mathbb{F}$. The unitary Rapoport-Zink space of signature $(1,n-1)$ is the following functor $\mathcal{N}_{1,n-1}:\textup{Nilp}_{\ofb}\rightarrow\textbf{Sets}$: for a scheme $S$ in the category $\textup{Nilp}_{\ofb}$, the set of $\mathcal{N}_{1,n-1}(S)$ is the isomorphism classes of quadruples $(X, \iota, \lambda, \rho)$, where $(X, \iota, \lambda)$ is a hermitian $\mathcal{O}_{E}$-module of signature $(1,n-1)$ over $S$ and $\rho : X \times_{S} \overline{S} \rightarrow \mathbb{X} \times_{\mathbb{F}} \overline{S}$ is an $\mathcal{O}_{E}$-linear quasi-isogeny of height zero such that $\rho^{\ast}((\lambda_{\mathbb{X}})_{\overline{S}})=\lambda_{\overline{S}}$. Two quadruples $(X, \iota, \lambda, \rho)$ and $(X^{\prime}, \iota^{\prime}, \lambda^{\prime}, \rho^{\prime})$ are isomorphic if there exists an isomorphism $f:X\rightarrow X^{\prime}$ of $\varpi$-divisible groups such that 
\begin{equation*}
    f^{\vee}\circ\lambda^{\prime}\circ f=\lambda,\,\,\rho^{\prime}=f\circ\rho,\,\,\iota^{\prime}(\alpha)\circ f=f\circ\iota(f)\,\,\textup{for all}\,\alpha\in\mathcal{O}_{E}.
\end{equation*}
\begin{theorem}
The functor $\mathcal{N}_{1,n-1}$ is represented by a formal scheme which is formally locally of finite type and formally smooth of relative dimension $n-1$ over $\textup{Spf}\,\ofb$.
\label{smoothu}
\end{theorem}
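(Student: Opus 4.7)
The plan is to derive representability and formally locally of finite type from the main theorem of Rapoport and Zink in its PEL formulation, and to deduce formal smoothness of the claimed relative dimension from a pointwise application of Grothendieck-Messing deformation theory. The relevant PEL datum is $(E/F,\sigma,(\cdot,\cdot),\mathbb{X})$ with $E/F$ unramified and signature $(1,n-1)$. This fits into the framework of the Rapoport-Zink book when $F=\mathbb{Q}_p$, and into the $\mathcal{O}_F$-module generalization developed by Ahsendorf-Cheng-Zink and Mihatsch for general $F/\mathbb{Q}_p$. Invoking this representability theorem yields a formal $\ofb$-scheme that is formally locally of finite type, since the underlying deformation problem is pro-represented locally by a noetherian adic $\ofb$-algebra.

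For formal smoothness, I would check the infinitesimal lifting criterion pointwise. Given $z=(X,\iota,\lambda,\rho)\in\mathcal{N}_{1,n-1}(\mathbb{F})$ and a square-zero extension $S\hookrightarrow\tilde{S}$ in $\textup{Nilp}_{\ofb}$ whose ideal carries the canonical $\mathcal{O}_F$-pd structure of \cite[Definition B.5.1.]{FGL08}, Grothendieck-Messing theory for $\varpi$-divisible $\mathcal{O}_F$-modules identifies the set of lifts of $(X,\iota,\lambda)$ from $S$ to $\tilde{S}$ with the set of lifts of the Hodge filtration inside $\mathbb{D}(X)(\tilde{S})$ that are stable under $\mathcal{O}_E$, isotropic for the pairing induced by $\lambda$, and satisfy the signature $(1,n-1)$ condition. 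Because $E/F$ is unramified, $\mathcal{O}_E\otimes_{\mathcal{O}_F}\mathcal{O}_{\tilde{S}}$ splits along the two embeddings $\phi_0,\phi_1$, so the crystal decomposes as $\mathbb{D}(X)(\tilde S)=\mathbb{D}_0\oplus\mathbb{D}_1$ with each summand locally free of rank $n$, and the polarization induces a perfect pairing between the two summands.

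The signature condition then cuts out the Hodge filtration as a corank-$1$ subbundle inside $\mathbb{D}_0$ together with its orthogonal complement inside $\mathbb{D}_1$, so lifts of the Hodge filtration over $\tilde{S}$ are parametrized by the Grassmannian of hyperplanes in a locally free rank-$n$ module over $\mathcal{O}_{\tilde{S}}$, which is smooth over $\ofb$ of relative dimension $n-1$. This simultaneously establishes the existence of lifts, giving formal smoothness, and the $n-1$ count of the tangent space. The only non-trivial input beyond the representability theorem is the verification that the $\mathcal{O}_F$-pd formalism of \cite{FGL08} is compatible with the $\mathcal{O}_E$-action and the Rosati involution (which induces $\sigma$ on $\mathcal{O}_E$), so that the decomposition $\mathbb{D}(X)=\mathbb{D}_0\oplus\mathbb{D}_1$ really reduces the deformation problem to a Grassmannian in a single isotypic component; this is standard, and once checked the theorem follows.
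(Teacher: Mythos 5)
Your proposal is essentially the standard argument that lies behind the references the paper cites for this theorem — the paper's ``proof'' of Theorem~\ref{smoothu} is simply a citation of \cite{RZ96}, \cite{KR11}, and \cite{Vol10}, so you are supplying the content that those references contain rather than a new route. Your Grothendieck--Messing analysis is moreover exactly the one the paper itself develops later in $\S$\ref{def-unitary} and encodes in Lemma~\ref{deformu}: the idempotent decomposition $\mathbb{D}(X)=\mathbb{D}_0\oplus\mathbb{D}_1$ along $\phi_0,\phi_1$, the perfect pairing between the two summands coming from $\lambda$, the corank-one piece of the Hodge filtration in $\mathbb{D}_0$ determined as the annihilator of the rank-one piece in $\mathbb{D}_1$, and the resulting identification of the lifting problem with a (torsor under a) Grassmannian of hyperplanes in a rank-$n$ module, smooth of relative dimension $n-1$. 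One small imprecision: lifts of a fixed Hodge filtration from $S$ to a square-zero extension $\tilde{S}$ form a torsor under $\operatorname{Hom}(\textup{Fil}^1,\operatorname{Lie})$ rather than literally a Grassmannian over $\mathcal{O}_{\tilde S}$, but this does not affect the smoothness and dimension count, since the local model is $\mathbb{P}^{n-1}$. The one thing your sketch does not address, and which is the genuine content of the cited representability theorems, is \emph{representability by a formal scheme formally locally of finite type}; you correctly outsource this to Rapoport--Zink and its $\mathcal{O}_F$-linear generalizations, which is the same move the paper makes.
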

\begin{proof}
    These facts about $\mathcal{N}_{1,n-1}$ are proved in \cite[Theorem 2.16]{RZ96}, \cite[$\S$2.1]{KR11} and \cite{Vol10}.
\end{proof}
In the following we denote $\mathcal{N}^{u} \coloneqq \mathcal{N}_{1,n-1}$. Let $(\mathbb{Y}, \iota_{\mathbb{Y}}, \lambda_{\mathbb{Y}})$ be the framing object for $n=1$. Let $(\overline{\mathbb{Y}}, \iota_{\overline{\mathbb{Y}}}, \lambda_{\overline{\mathbb{Y}}})=(\mathbb{Y}, \iota_{\mathbb{Y}}\circ\sigma, \lambda_{\mathbb{Y}})$ be its conjugate. There is a canonical lift $(\overline{Y},\iota_{\overline{Y}},\lambda_{\overline{Y}})$ of $(\overline{\mathbb{Y}}, \iota_{\overline{\mathbb{Y}}}, \lambda_{\overline{\mathbb{Y}}})$ over $\ofb$ (cf. \cite[Proposition 2.1]{Gro86}), here $\iota_{\overline{Y}}:\mathcal{O}_{E}\rightarrow\textup{End}_{\ofb}(\overline{Y})$ is an action such that for any $\alpha\in\mathcal{O}_{E}$,
\begin{equation*}
    \textup{char}(\iota(\alpha) : \textup{Lie}\,\overline{Y})(T) = T - \phi_{1}(\alpha) \in \ofb[T];
\end{equation*}
we fix an isomorphism $\rho_{\overline{Y}}:\overline{Y}\times_{\ofb}\mathbb{F}\rightarrow\overline{\mathbb{Y}}$.
\subsubsection{GSpin Rapoport-Zink space $\mathcal{N}^{o}$}
\label{gspin-rz}
Let $k\geq1$ be an integer. A quadratic lattice of rank $k$ is a pair $(L,q_{L})$ such that $L$ is a free $\mathcal{O}_{F}$-module of rank $k$ and $q_{L}: L\rightarrow F$ is a quadratic form on $L$. The quadratic form $q_{L}$ induces a symmetric bilinear form $(\cdot,\cdot):L\times L\rightarrow F$ by $(x,y)=\frac{1}{2}(q(x+y)-q(x)-q(y))$. Let $L^{\sharp}=\{x\in L\otimes_{\mathcal{O}_{F}}F : (x,L)\subset \mathcal{O}_{F}\}$. We say a quadratic lattice is integral if $q_{L}(x)\in\mathcal{O}_{F}$ for all $x\in L$, is self-dual if it is integral and $L$ = $L^{\sharp}$. In this article we will only work with quadratic lattice over $\mathbb{Z}_p$.
\par
Let $n\geq2$ be an integer, and $V$ be a self-dual quadratic lattice of rank $m=n+1$ over $\mathbb{Z}_p$. The Clifford algebra $C(V)$ of $V$ is the quotient of tensor algebra $T^{\bullet}(V)$ by the relations $v\cdot v=q(v)\cdot1$ for all $v\in V$. It is an associated $\mathbb{Z}/2\mathbb{Z}$-graded algebra with grading given by the parity of the tensors. Let $C^{+}(V)$ be the even part, while $C^{-}(V)$ be the odd part. The canonical involution on $C(V)$ is characterized by $(v_1\cdots v_d)^{\ast}=v_d\cdots v_1$ for all $v_1,\cdots, v_d\in V$. We fix an element $\delta\in C(V)^{\times}$ with $\delta^{\ast}=-\delta$, then
\begin{equation}
    \psi_{\delta}(c_1,c_2)=\textup{Trd}(c_1\delta c_2^{\ast})
    \label{polarization}
\end{equation}
is a perfect symplectic form on $C(V)$.
\par
The spinor similitude group $G = \textup{GSpin}(V)$ is a reductive group over $\mathbb{Z}_p$. For any $\mathbb{Z}_p$-algebra $R$, let $V_R=V\otimes_{\mathbb{Z}_p}R$ and $C^{+}(V_{R})=C^{+}(V)\otimes_{\mathbb{Z}_p}R$, the set of $R$-points of $G$ is given by
\begin{equation*}
    G(R)=\{g\in C^{+}(V_{R})^{\times}: g V_{R}g^{-1}=V_{R}, g^{\ast}g\in R^{\times}\}.
\end{equation*}
The group $G$ acts on $C(V)$ by left multiplication, yielding a closed immersion $G\rightarrow\textup{GL}(C(V))$. Let $(s_{\alpha})_{\alpha\in I}$ be a finite set of tensors $s_\alpha$ in the total tensor algebra $C(V)^{\otimes}$ cutting out $G$ from $\textup{GL}(C(V))$. One of the tensors $\boldsymbol{\pi}$ can be chosen as follows (\cite[$\S$1.3]{Mad16}): The left multiplication action of $V$ on $C(V)$ gives a $G$-equivariant embedding $V\hookrightarrow \textup{End}(C(V))$. We identify $V\subset\textup{End}(C(V))$ via this embedding. Then $f\circ f=q(f)\cdot\textup{id}_{C}$ for all $f\in V$. The non-degenerate symmetric bilinear form $(\cdot,\cdot)$ on $\textup{End}(C(V))$ defined by
\begin{equation*}
    (f_1,f_2)\coloneqq 2^{-m}\textup{tr}(f_1\circ f_2)
\end{equation*}
extends the symmetric bilinear form on $V$. Let $\{e_i\}_{i=1}^{m}$ be an orthogonal $\mathbb{Z}_p$-basis of $V$, define
\begin{equation}
    \bpi:\textup{End}(C(V))\rightarrow\textup{End}(C(V)),\,\,\bpi(f)\coloneqq\sum\limits_{i=1}^{m}\frac{(f,e_i)}{(e_i,e_i)}e_i.
    \label{pro-local}
\end{equation}
\par
Fix a basis $\{x_i\}_{i=1}^{m}$ of $V$ for which the inner product matrix of $V$ has the form 
\begin{equation*}
    \left((x_i,x_j)\right)=\begin{pmatrix}
        0 & 1 &    &      &      \\
        1 & 0 &    &      &      \\
          &   &\ast&      &      \\
          &   &    &\ddots&      \\
          &   &    &      &\ast
    \end{pmatrix}.
\end{equation*}
This choice of basis determines a cocharacter $\mu:\mathbb{G}_{m}\rightarrow G$ (cf. \cite[$\S$4.2.1]{HP17}) by
\begin{equation*}
    \mu(t)=t^{-1}x_1x_2+x_2x_1.
\end{equation*}
We also set $b=x_3(p^{-1}x_1+x_2)\in G(\mathbb{Q}_{p})$.
\par
Howard and Pappas \cite[Proposition 4.2.6]{HP17} proved that the quadruple $(G, b, \mu, C(V))$ is a local unramified Shimura-Hodge datum. Let $\Breve{\mathbb{Q}}_p$ be the completion of the maximal unramified extension of $\mathbb{Q}_p$, with ring of integers $\Breve{\mathbb{Z}}_p$. Set $C^{\vee} = \textup{Hom}_{\mathbb{Z}_{p}}(C(V), \mathbb{Z}_p)$ with the contragredient action of $G$. By \cite[Lemma 2.2.5]{HP17}, this local unramified Shimura-Hodge data gives rise to a (unique up to isomorphism) supersingular $p$-divisible group $\mathbb{X}_{V}$ over $\mathbb{F}$ whose contravariant Dieudonne module $\mathbb{D}(\mathbb{X}_{V})(\Breve{\mathbb{Z}}_p)$ is given by $C^{\vee}_{\Breve{\mathbb{Z}}_p}\coloneqq C^{\vee}\otimes_{\mathbb{Z}_{p}}\Breve{\mathbb{Z}}_p$ with Frobenius $\mathbf{F} = b \circ \sigma$, we also obtain $t_{\alpha,0}=s_{\alpha}\otimes1\in(C^{\vee\otimes})_{\zpb}=\mathbb{D}(\mathbb{X}_{V})^{\otimes}$, which are $\mathbf{F}$-invariant. Moreover, the perfect symplectic form $\psi_\delta$ in (\ref{polarization}) on $C(V)$ determines a principal polarization $\lambda_{0}:\mathbb{X}_{V}\rightarrow\mathbb{X}_{V}^{\vee}$.
\par
Associated to the local unramified Shimura-Hodge data $(G, b, \mu, C(V))$, we have a GSpin Rapoport–Zink space $\textup{RZ}(V) = \textup{RZ}(G, b, \mu, C(V))$ of Hodge type (\cite[$\S$4.3]{HP17}, see also \cite{Kim18}) parametrizing $p$-divisible groups with crystalline Tate tensors. More precisely, it is a formal scheme over $\textup{Spf}\,\zpb$ representing the functor sending $R\in\textup{ANilp}_{\zpb}^{\textup{fsm}}$ to the set of isomorphism classes of tuples $(X,(t_{\alpha})_{\alpha\in I},\rho)$, where
\begin{itemize}
    \item[$\bullet$] $X$ is a $p$-divisible group over $\textup{Spec}\,R$.
    \item[$\bullet$] $(t_{\alpha})_{\alpha\in I}$ is a collection of crystalline Tate tensors of $X$.
    \item[$\bullet$] $\rho:\mathbb{X}_{V}\times_{\mathbb{F}}R/J\rightarrow X\times_{R}R/J$ is a framing, i.e., a quasi-isogeny such that each $t_{\alpha}$ pulls back to $t_{\alpha,0}$ under $\rho$, where $J$ is some ideal of definition of $R$ such that $p\in J$.
\end{itemize}
The tuple $(X,(t_{\alpha})_{\alpha\in I},\rho)$ is required to satisfy additional assumptions \cite[Definition 2.3.3 (ii),(iii)]{HP17}.
\begin{theorem}
The GSpin Rapoport-Zink space $\textup{RZ}(V)$ is formally locally of finite type and formally smooth of relative dimension $n - 1$ over $\textup{Spf}\,\zpb$.
\label{smootho}
\end{theorem}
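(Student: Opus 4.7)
The plan is to reduce to the known representability and formal smoothness of a Siegel-type Rapoport-Zink space via the Clifford embedding, and then use Grothendieck-Messing type deformation theory to control the crystalline Tate tensors. Concretely, the symplectic form $\psi_\delta$ in (\ref{polarization}) realizes $G=\textup{GSpin}(V)$ as a closed subgroup of $\textup{GSp}(C(V),\psi_\delta)$, and the cocharacter $\mu$ is minuscule with respect to this embedding. The framing object $(\mathbb{X}_V,\lambda_0)$ is thereby a point of the Siegel Rapoport-Zink space $\textup{RZ}(\mathbb{X}_V,\lambda_0)$ over $\textup{Spf}\,\zpb$, which by \cite{RZ96} is representable by a formal scheme formally locally of finite type and formally smooth of relative dimension equal to $\dim\textup{GSp}(C(V))/P_\mu^{\textup{Sp}}$.

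Next I would identify $\textup{RZ}(V)$ with the closed formal subscheme of $\textup{RZ}(\mathbb{X}_V,\lambda_0)$ cut out by the condition that the $\mathbf{F}$-invariant tensors $t_{\alpha,0}\in \mathbb{D}(\mathbb{X}_V)^{\otimes}$ extend to crystalline Tate tensors on the universal $p$-divisible group $X$ pulled back via $\rho$, and that these tensors satisfy the filtration-compatibility condition of \cite[Definition 2.3.3]{HP17}. That this is a closed condition and that it gives a well-defined functor on $\textup{ANilp}_{\zpb}^{\textup{fsm}}$ is the content of Kim's and Howard-Pappas' constructions. Representability then follows from representability of the Siegel space together with the closedness of the tensor condition.

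For formal smoothness, I would apply Grothendieck-Messing deformation theory in its tensor-enhanced Hodge-type form (this is the origin of Lemma \ref{deformo}, cited from \cite[Proposition 5.16]{Mad16}). Given a square-zero thickening $R'\twoheadrightarrow R$ in $\textup{ANilp}_{\zpb}^{\textup{fsm}}$ and an $R$-point $(X,(t_\alpha),\rho)$ of $\textup{RZ}(V)$, lifting to $R'$ amounts to lifting the Hodge filtration on $\mathbb{D}(X)(R')$ to a direct summand which is stabilized by $G$ under the tensor identifications provided by $(t_\alpha)$. The torsor of such lifts is non-empty and is a torsor under the unipotent radical of the opposite parabolic $P_\mu^{\textup{op}}\subset G_{R'}$, whose Lie algebra is locally free of rank $n-1=\dim G/P_\mu$. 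This simultaneously shows formal smoothness and computes the relative dimension.

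The step I expect to be most delicate is the existence and uniqueness of the crystalline Tate tensor extension, i.e., showing that the condition defining $\textup{RZ}(V)$ inside the Siegel RZ space is indeed representable by a closed formal subscheme and interacts correctly with the deformation theory; this is precisely where one needs $p>2$ and the hypothesis that $V$ is self-dual (so $G$ is reductive over $\zp$), and where one must invoke the machinery of Kim \cite{Kim18} or Madapusi \cite{Mad16} to ensure that parallel transport along the crystal preserves the tensors. Once this is granted, the formal smoothness and the dimension count are a direct computation on the opposite parabolic, and $\textup{Spf}\,\zpb$-local finite type is inherited from the ambient Siegel Rapoport-Zink space.
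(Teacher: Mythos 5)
Your sketch reconstructs the strategy behind the paper's one-line citation to \cite[Theorem A]{HP17} (which itself rests on \cite{Kim18}): realize $\textup{RZ}(V)$ as a closed formal subscheme of the Siegel Rapoport--Zink space via the Clifford representation, deduce representability from the closedness of the tensor condition, and obtain formal smoothness and the relative dimension $n-1$ from tensor-enhanced Grothendieck--Messing theory together with the identification of the local model as the quadric of isotropic lines in $V$, which has dimension $m-2=n-1$. Since the paper provides no proof beyond the citation, your approach agrees in substance with the cited references; one small imprecision is that the lifted Hodge filtration must lie in the $G(R')$-orbit of the filtration induced by $\mu$ (i.e.\ be of type $\mu$), not be ``stabilized by $G$''.
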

\begin{proof}
    This is Theorem A in \cite{HP17}.
\end{proof}
Let $X^{\textup{univ}}$ be the universal $p$-divisible group over $\textup{RZ}(V)$ with the universal quasi-isogeny $\rho^{\textup{univ}}: \mathbb{X}_{V}\times_{\mathbb{F}}\overline{\textup{RZ}(V)}\rightarrow X^{\textup{univ}}\times_{\textup{RZ}(V)}\overline{\textup{RZ}(V)}$. Let $\mathcal{N}^{o}$ be the connected component of $\textup{RZ}(V)$ such that the principal polarization $\lambda_{0}$ of $\mathbb{X}_{V}$ lifts to a principal polarization $\lambda^{\textup{univ}}$ on $X^{\textup{univ}}$ via the universal quasi-isogeny $\rho^{\textup{univ}}$ (cf. \cite[$\S$4.5]{LZ22b}).

\subsection{The space of special quasi-homomorphisms}
\subsubsection{The unitary case}
\label{special-unitary}
Kudla and Rapoport \cite[Definition 3.1]{KR11} defined the space of special homomorphisms to be the $E$-vector space $\mathbb{V}^{u}\coloneqq\textup{Hom}_{\mathcal{O}_{E}} (\overline{\mathbb{Y}}, \mathbb{X})\otimes_{\mathbb{Z}}\mathbb{Q}$. There is a natural $\mathcal{O}_{E}$-valued $\sigma$-Hermitian form on $\mathbb{V}^{u}$ (cf. \cite[(3.1)]{KR11}) given by
\begin{equation*}
    (x,y)\mapsto \lambda_{\overline{\mathbb{Y}}}^{-1}\circ y^{\vee}\circ \lambda_{\mathbb{X}}\circ x\in\textup{End}_{\mathcal{O}_{E}}(\overline{\mathbb{Y}})\stackrel{\sim}\rightarrow\mathcal{O}_{E}.
\end{equation*}
The hermitian space $V^{u}$ is the unique (up to isomorphism) non-degenerate non-split $E/F$-hermitian space of dimension $n$ (cf. \cite[$\S$2.2]{LZ22a}).

\subsubsection{The GSpin case}
\label{special-gspin}
The inclusion $V \subset C(V)^{\textup{op}}$ (where $V$ acts on $C(V)$ via right multiplication) realizes $V \subset \textup{End}_{\mathbb{Z}_p}(C^{\vee})$. Tensoring with $\qpb$ gives a subspace $V_{\qpb} \subset \textup{End}_{\qpb}(C^{\vee}_{\qpb})$. Define the $\sigma$-linear operator $\Phi = \overline{b} \circ \sigma$ on $V_{\qpb}$, where $\overline{b} \in \textup{SO}(V)(\qpb)$ is the image of $b \in G(\qpb)$ under the natural quotient map $G = \textup{GSpin}(V ) \rightarrow \textup{SO}(V )$. Then $(V_{\qpb}, \Phi)$ is an isocrystal. The $\Phi$-fixed vectors form a distinguished $\mathbb{Q}_p$-vector subspace
\begin{equation*}
    \mathbb{V}^{o}\coloneqq V_{\qpb}^{\Phi}\subset \textup{End}^{\circ}(\mathbb{X}) \coloneqq \textup{End}(\mathbb{X})\otimes_{\mathbb{Z}}\mathbb{Q},
\end{equation*}
called the space of special quasi-homomorphisms of $\mathbb{X}_{V}$ (\cite[$\S$4.3.1]{HP17}, see also \cite[$\S$4.2]{LZ22b}). 
\par
Let $\Breve{q}$ be the base change of the quadratic form $q$ on $V$ to $V_{\qpb}$. For any $x\in V_{\qpb}$, we have $x\circ x=\Breve{q}(x)\cdot\textup{id}_{\mathbb{X}_{V}}$ in $\textup{End}_{\qpb}(C^{\vee}_{\qpb})$ by the construction of the Clifford algebra $C(V)$. Let $q_{\mathbb{V}^{o}}$ be the restriction of the quadratic form $\Breve{q}$ on $V_{\qpb}$ to $\mathbb{V}^{o}$. Therefore $x \circ x = q_{\mathbb{V}^{o}}(x)\cdot \textup{id}_{\mathbb{X}}$ for $x\in\mathbb{V}^{o}$. The quadratic form $q_{\mathbb{V}^{o}}$ values in $\mathbb{Q}_p$ and the quadratic space $\mathbb{V}^{o}$ is the unique (up to isomorphism of quadratic spaces) non-degenerate quadratic space with the same dimension and discriminant, but opposite Hasse invariant as $V$ (cf. \cite[Proposition 4.2.5, $\S$4.3.1]{HP17}).

\subsection{Special cycles and difference divisors}
\label{special-cycles}
\begin{definition}
In the unitary case, for any subset $L \subset \mathbb{V}^{u}$, define the special cycle $\mathcal{Z}(L)$ in $\mathcal{N}^{u}$ as the following subfunctor: for any object $S$ in $\textup{Nilp}_{\ofb}$, the set $\mathcal{Z}(L)(S)$ consists of elements $(X,\iota_{X},\lambda_{X},\rho_{X})\in\mathcal{N}^{u}(S)$ such that the quasi-homomorphism $\rho_{X}^{-1}\circ x\circ \rho_{\overline{Y}}: \overline{Y}\times_{S}\overline{S}\rightarrow X\times_{S}\overline{S}$ extends to a homomorphism from $\overline{Y}$ to $X$ for all $x\in L$.
\par
In the GSpin case, for any subset $L \subset \mathbb{V}^{o}$, define the special cycle $\mathcal{Z}(L) \subset \mathcal{N}^{o}$ to be the closed formal subscheme cut out by the condition
\begin{equation*}
    \rho^{\textup{univ}}\circ x\circ (\rho^{\textup{univ}})^{-1}\subset\textup{End}(X^{\textup{univ}}),
\end{equation*}
for all $x \in L$.
\label{cycle}
\end{definition}
\begin{remark}
    We use the same symbol $\mathcal{Z}(L)$ for special cycles, although they may lie in different formal schemes. In the following discussion, we will always make clear on which space the special cycle lie in.
\end{remark}
\begin{proposition}
Let $x\in\mathbb{V}^{u}$ (resp. $x\in\mathbb{V}^{o}$) be a nonzero element such that $(x,x)\in\mathcal{O}_{E}$ (resp. $q_{\mathbb{V}^{o}}(x)\in\mathbb{Z}_{p}$), the special cycle $\mathcal{Z}(x)$ is an effective Cartier divisor in $\mathcal{N}^{u}$ (resp. $\mathcal{N}^{o}$) and flat over $W$.
\label{divisor}
\end{proposition}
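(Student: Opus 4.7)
The strategy is entirely local at points $z \in \mathcal{Z}(x)(\mathbb{F})$. By Theorems \ref{smoothu} and \ref{smootho}, the complete local ring $R \coloneqq \widehat{\mathcal{O}}_{\mathcal{N}, z}$ is isomorphic to a power series ring $W[[t_1, \ldots, t_{n-1}]]$ (with $W = \ofb$ or $\zpb$). It suffices to show that the ideal of $\mathcal{Z}(x)$ in $R$ is generated by a single element $f_x$ not divisible by the uniformizer $\varpi$ (resp.\ $p$). The Cartier divisor property then follows, since $R$ is a regular domain so any nonzero $f_x$ is a non-zerodivisor; and $W$-flatness of $R/(f_x)$ follows from the standard check that $\varpi$ remains a non-zerodivisor on $R/(f_x)$ as soon as the image of $f_x$ in the domain $R/\varpi R$ is nonzero.

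To extract the local equation I would invoke the Grothendieck--Messing-type deformation results recalled later in the paper, namely Lemma \ref{deformu} in the unitary case and Lemma \ref{deformo} in the GSpin case. Both lemmas identify deformations of the framed $p$-divisible group over a small extension with lifts of the Hodge filtration on the evaluated crystal (equipped with its $\mathcal{O}_E$-action or its Tate tensors), and describe extension of a quasi-homomorphism as a compatibility condition with this lifted filtration. In the unitary case, the signature $(1, n-1)$ condition forces the ``non-canonical'' $\mathcal{O}_E \otimes R$-summand of $\textup{Lie}\,X$ to be free of rank one, so the compatibility criterion for $x : \overline{\mathbb{Y}} \to \mathbb{X}$ is the vanishing of a single scalar $f_x$. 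In the GSpin case, the image of $x \in V \subset \textup{End}(C(V))$ under the projector $\bpi$ of (\ref{pro-local}) lies in a rank-one component of the universal filtered endomorphism, again yielding one equation. Non-vanishing of $f_x$ is then immediate: if $f_x$ vanished identically, $x$ would extend along every deformation of $z$, in particular to a lift of $\mathbb{X}$ over $W$ whose rigid generic fiber has rational endomorphism algebra strictly smaller than $\mathbb{V}^u$ (resp.\ $\mathbb{V}^o$), a contradiction.

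The main obstacle is the flatness condition $f_x \not\in \varpi R$. To settle it I would inspect the linear part of $f_x$ modulo $\varpi$ directly via the deformation-theoretic description: formal smoothness of $\mathcal{N}$ over $W$ guarantees that the reduction modulo $\varpi$ of the universal Hodge filtration covers a generic open subset of the relevant Grassmannian as $t_1, \ldots, t_{n-1}$ vary, so the linearization at $z$ of the obstruction to extending $x$ is the Kodaira--Spencer-type pairing of $x$ against $\textup{Tgt}_z(\mathcal{N} \otimes \mathbb{F})$. Since $x \neq 0$ in $\mathbb{V}$, this pairing gives a nonzero linear form on the tangent space, forcing $f_x \not\equiv 0 \pmod{\varpi}$. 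The bookkeeping is more involved in the GSpin case because of the tensorial description of deformations via $C^{\vee}$, but the mechanism is identical to the unitary setting and relies on essentially the same computations that will appear in Section \ref{def}.
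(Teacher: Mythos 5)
The paper proves Proposition \ref{divisor} simply by citing \cite[Proposition 3.5, Lemma 3.7]{KR11} in the unitary case and \cite[Proposition 4.10.1]{LZ22b} in the GSpin case, so your attempt to reconstruct a direct deformation-theoretic proof is not comparable to anything in the text; it has to be judged on its own.

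The reduction to showing that $\mathcal{Z}(x)$ is locally cut out in $\widehat{\mathcal{O}}_{\mathcal{N},z}\simeq W[[t_1,\dots,t_{n-1}]]$ by a single nonzero $f_x$ with $f_x\notin\varpi R$ (resp.\ $p R$) is the right framework, and extracting a single local equation from Lemma \ref{deformu} / Lemma \ref{deformo} is plausible. The gap is in the flatness step. You claim that since $x\neq 0$ the Kodaira--Spencer pairing of $x$ against $\textup{Tgt}_z(\mathcal{N}\otimes\mathbb{F})$ is a \emph{nonzero} linear form, hence $f_x\not\equiv 0\pmod\varpi$. But that pairing is computed through $\overline{x_{\textup{crys},z}}$, the image of $x_{\textup{crys},z}$ in $\mathbf{D}_z/\varpi\mathbf{D}_z$ (resp.\ $\mathbf{V}_z/p\mathbf{V}_z$), and this image can vanish even for a \emph{primitive} nonzero $x$. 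The paper's own Example \ref{difference1}(b) exhibits this: with $n=2$ and $\nu_\varpi((x,x))=1$ one has $x_{\textup{crys},z}\in\varpi\mathbf{D}_z$ (so the pairing is identically zero, and by Lemma \ref{smoothcycleu} the cycle $\mathcal{Z}(x)$ is not formally smooth over $\ofb$), yet $\mathcal{Z}(x)\simeq\textup{Spf}\,W_1$ for a ramified extension $W_1/\ofb$ — in particular regular and flat with local equation of Eisenstein type $t^{q+1}+\cdots$, whose linear part is in $(\varpi)$. So vanishing of the linear part is compatible with $f_x\notin\varpi R$, and the flatness assertion cannot be read off from the linearization at $z$. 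Establishing $f_x\notin\varpi R$ genuinely needs information beyond first order (in the cited references this is handled by identifying the strata of $\mathcal{Z}(x)_{\mathbb{F}}$, or by a windows-theory/global argument), and your proposal as written does not supply it.

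A secondary, smaller issue: the nonvanishing of $f_x$ is asserted via ``$x$ would extend along every deformation, in particular to a characteristic-zero lift, contradicting the size of the generic endomorphism algebra.'' The conclusion is reachable (e.g.\ by choosing a $W$-point $\tilde z$ with $\nu_\varpi(\langle x_{\textup{crys},z},l_{\tilde z}\rangle)$ finite, as in Proposition \ref{orderlemma}), but as phrased the appeal to ``rational endomorphism algebra strictly smaller than $\mathbb{V}^u$'' conflates the space of special quasi-homomorphisms with an endomorphism algebra and does not pin down which lift is used; it would need to be made precise.
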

\begin{proof}
    For the unitary case, this is proved in \cite[Proposition 3.5, Lemma 3.7]{KR11}. For the GSpin case, this is proved in \cite[Proposition 4.10.1]{LZ22b}
\end{proof}
\begin{definition}
    Let $x\in\mathbb{V}^{u}$ (resp. $x\in\mathbb{V}^{o}$) be a nonzero element such that $(x,x)\in\mathcal{O}_{E}$ (resp. $q_{\mathbb{V}^{o}}(x)\in\mathbb{Z}_{p}$). For a point $z\in\mathcal{N}^{u}(\mathbb{F})$ (resp. $\mathcal{N}^{o}(\mathbb{F})$), let $f_{x,z}\in \widehat{\mathcal{O}}_{\mathcal{N}^{u},z}$ (resp. $\in\widehat{\mathcal{O}}_{\mathcal{N}^{o},z}$) be the local equation of the closed formal subscheme $\mathcal{Z}(x)$ at $z$, i.e., $\widehat{\mathcal{O}}_{\mathcal{Z}(x),z}\simeq \widehat{\mathcal{O}}_{\mathcal{N}^{u},z}/(f_{x,z})$ (resp. $\widehat{\mathcal{O}}_{\mathcal{Z}(x),z}\simeq \widehat{\mathcal{O}}_{\mathcal{N}^{o},z}/(f_{x,z})$). Notice that the element $f_{x,z}$ is well-defined up to units in $\widehat{\mathcal{O}}_{\mathcal{N}^{u},z}$ (resp. $\widehat{\mathcal{O}}_{\mathcal{N}^{o},z}$).
\end{definition}
\par
Let $x\in\mathbb{V}^{u}$ ($x\in\mathbb{V}^{o}$) be a nonzero element such that $(x,x)\in\mathcal{O}_{E}$ (resp. $q_{\mathbb{V}^{o}}(x)\in\mathbb{Z}_{p}$), we have a natural closed immersion of closed formal schemes $\mathcal{Z}(\varpi^{-1}x)\hookrightarrow\mathcal{Z}(x)$ (resp. $\mathcal{Z}(p^{-1}x)\hookrightarrow\mathcal{Z}(x)$) by Definition \ref{cycle}. The closed immersion $\mathcal{Z}(\varpi^{-1}x)\hookrightarrow\mathcal{Z}(x)$ (resp. $\mathcal{Z}(p^{-1}x)\hookrightarrow\mathcal{Z}(x)$) implies the divisibility $f_{\varpi^{-1}x,z}\vert f_{x,z}$ at every point $z\in\mathcal{N}^{u}(\mathbb{F})$ (resp. $\mathcal{N}^{o}(\mathbb{F})$).
\begin{definition}
    Let $x \in \mathbb{V}^{u}$ (resp. $x\in\mathbb{V}^{o}$) be an element such that $(x,x)\in\mathcal{O}_{E}$ (resp. $q_{\mathbb{V}^{o}}(x)\in\mathbb{Z}_{p}$). Define the difference divisor associated to $x$ to be the following Cartier divisor on $\mathcal{N}^{u}$ (resp. $\mathcal{N}^{o}$),
    \begin{equation*}
        \mathcal{D}(x)\coloneqq\mathcal{Z}(x)-\mathcal{Z}(\varpi^{-1}x)\,\,\,\,\textup{(resp. $\mathcal{D}(x)\coloneqq\mathcal{Z}(x)-\mathcal{Z}(p^{-1}x)$)}.
    \end{equation*}
    i.e., at a point $z\in\mathcal{Z}(x)(\mathbb{F})$, the difference divisor $\mathcal{D}(x)$ is cut out by $d_{x,z}\coloneqq f_{x,z}/f_{\varpi^{-1}x,z}\in\widehat{\mathcal{O}}_{\mathcal{N}^{u},z}$ (resp. $\widehat{\mathcal{O}}_{\mathcal{N}^{o},z}$).
    \label{difference}
\end{definition}

\section{Deformation theory}
\label{def}
\subsection{The unitary case}
\subsubsection{Preliminaries on linear algebra}
\begin{definition}
    Let $R$ be a commutative ring. Let $D$ be a locally free $R$-module of finite rank $n$, a hyperplane $P$ in $D$ is a locally free $R$-submodule of $D$ of rank $n-1$, a line $L$ in $D$ is a locally free $R$-submodule of $D$ of rank $1$.
    \label{plane-and-line}
\end{definition}
\begin{definition}
    Let $R$ be a commutative ring. Let $D_1$ and $D_2$ be two $R$-modules. An $R$-bilinear pairing $\langle\cdot,\cdot\rangle:D_1\times D_2\rightarrow R$ is perfect if the induced maps $D_1\rightarrow D_2^{\vee},\,d_1\rightarrow\langle d_1,\cdot\rangle$ and $D_2\rightarrow D_1^{\vee},\,d_1\rightarrow\langle d_1,\cdot\rangle$ are isomorphisms.
\end{definition}
\begin{lemma}
    Let $R$ be a commutative ring. Let $D_1$ and $D_2$ be two locally free $R$-module of finite rank $n$. Let $\langle\cdot,\cdot\rangle:D_1\times D_2\rightarrow R$ be an $R$-bilinear pairing. Then we have a bijection,
    \begin{align*}
        \{\textup{Lines in}\,\,D_1.\}&\longleftrightarrow\{\textup{Hyperplanes in $D_2$}.\},\\
        L & \longmapsto P=\{x\in D:\,\langle l,x\rangle=0,\,\,\textup{for all}\,\,l\in L.\};\\
        L=\{l\in D_1:\,\langle l,x\rangle=0,\,\,\textup{for all}\,\,x\in P.\}.&\testleftlong P
    \end{align*}
    \label{corre}
\end{lemma}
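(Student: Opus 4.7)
The plan is to reduce the statement to a direct computation over free modules. I first observe that, although the lemma is phrased with only a bilinear pairing, perfectness of $\langle\cdot,\cdot\rangle$ (as in the definition just above) is implicitly required; without it, the annihilator of a line has no reason to have the correct rank. Since the formation of annihilators and the property of being locally free both commute with localization, I would localize at an arbitrary prime of $R$ to reduce to the case where $R$ is local and both $D_1, D_2$ are free of rank $n$.

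The first step is well-definedness of the two maps. Given a line $L \subset D_1$ (in the local case, a free rank-one direct summand), the pairing induces a homomorphism $\alpha_L : D_2 \to L^\vee$ sending $x \mapsto \langle \cdot, x\rangle|_L$, whose kernel is precisely the proposed annihilator $P$. The perfect pairing identifies $D_2$ with $D_1^\vee$, under which $\alpha_L$ becomes the restriction map $D_1^\vee \to L^\vee$; this is surjective since $L \hookrightarrow D_1$ is a split inclusion. Because $L^\vee$ is projective of rank $1$, the short exact sequence
\begin{equation*}
0 \to P \to D_2 \to L^\vee \to 0
\end{equation*}
splits, exhibiting $P$ as a direct summand of rank $n-1$, i.e.\ a hyperplane. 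The construction in the reverse direction is entirely symmetric.

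The second step is to check that the two maps are mutually inverse. The inclusion $L \subset (L^\perp)^\perp$ is tautological. By step one, $(L^\perp)^\perp$ is again a line, and since $L$ and $(L^\perp)^\perp$ are both rank-one direct summands of $D_1$ with one contained in the other, they must coincide (after localizing and choosing a basis of $D_1$ adapted to $L$, this is immediate from perfectness). Equality $(P^\perp)^\perp = P$ is proved in the same way.

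The only real obstacle, and a mild one, is the bookkeeping around the locally free hypothesis: I need that a locally free rank-one submodule $L \subset D_1$ is in fact locally a direct summand, which holds as soon as $D_1/L$ has constant rank $n-1$. This is automatic in the local case after localization, at which point the whole argument collapses to the classical linear-algebra bijection between subspaces of a vector space and subspaces of its dual via annihilators.
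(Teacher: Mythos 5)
The paper's own proof is the single sentence that ``this can be proved by purely linear algebra,'' so there is no detailed argument to compare against; your fleshed-out version takes the expected route through the perfect pairing and the induced identification $D_2 \cong D_1^\vee$, and the main body of your argument is sound. You are also right to flag that the word ``perfect'' has been dropped from the hypothesis of the statement (the pairing is perfect everywhere the lemma is actually invoked, e.g.\ in Lemma \ref{deformu}), and inserting it is a genuine correction.

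There is, however, a real gap in your last paragraph. You need $L$ to be locally a \emph{direct summand} of $D_1$, and you assert that this ``is automatic in the local case after localization.'' It is not: over $R=\mathbb{Z}_p$ with $D_1=D_2=\mathbb{Z}_p^{n}$ and the standard pairing, the submodule $L=\mathbb{Z}_p\cdot(p,0,\dots,0)$ is free of rank one but is not a direct summand, its annihilator is the coordinate hyperplane $\{x_1=0\}$, and the annihilator of that hyperplane is $\mathbb{Z}_p\cdot(1,0,\dots,0)\supsetneq L$, so the two maps are not mutually inverse on this $L$. Thus the lemma as literally printed is false over a general commutative ring; the words ``line'' and ``hyperplane'' in Definition \ref{plane-and-line} must be read as ``locally free submodule with locally free quotient'' (equivalently, local direct summand). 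This imprecision is harmless for the paper, since in every application the lines and hyperplanes are Hodge filtrations or their Grothendieck--Messing lifts along nilpotent PD-thickenings, which are always direct summands. Your write-up should state the direct-summand hypothesis explicitly rather than claim it comes for free; with that one change, the argument via the split exact sequence $0 \to P \to D_2 \to L^\vee \to 0$ and the observation that nested rank-one direct summands coincide goes through exactly as you wrote it.
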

\begin{proof}
    This can be proved by purely linear algebra.
\end{proof}
\subsubsection{Deformation theory}
\label{def-unitary}
For an object $S\in\textup{Alg}_{\ofb}$ and any $z\in\mathcal{N}^{u}(S)$, the point $z$ corresponds to a hermitian $\mathcal{O}_{E}$-module $(X_z,\iota_{z},\lambda_z,\rho_{z})$ of signature $(1,n-1)$ as in $\S$\ref{rz-unitary}. Let $\mathbb{D}(X_z)$ be the Dieudonne crystal of $X_z$ in the site $\textup{NCRIS}_{\ofb}(S/\textup{Spec}\,\ofb)$, it is a rank $2n$ $\mathcal{O}_{S}^{\textup{crys}}$-module crystal with $\mathcal{O}_E$-action. Under the isomorphism $\mathcal{O}_{E}\otimes_{\mathcal{O}_F}\ofb\simeq\ofb\times\ofb$, we have
\begin{equation*}
    \mathcal{O}_{E}\otimes_{\mathcal{O}_F}\mathcal{O}_{S}^{\textup{crys}}\simeq\mathcal{O}_{S}^{\textup{crys}}\times \mathcal{O}_{S}^{\textup{crys}}.
\end{equation*}
Accordingly, the crystal $\mathbb{D}(X_z)$ has the following decomposition
\begin{equation*}
    \mathbb{D}(X_z)=\mathbb{D}_0(X_z)\oplus\mathbb{D}_1(X_z),
\end{equation*}
where both of $\mathbb{D}_0(X_z)$ and $\mathbb{D}_1(X_z)$ are $\mathcal{O}_{S}^{\textup{crys}}$-modules of rank $n$, and $\mathcal{O}_E$ acts on $\mathbb{D}_i(X_z)$ by multiplication through $\phi_i:\mathcal{O}_{E}\rightarrow\ofb$ for $i=0,1$.
\par
For simplicity, let $\mathbf{D}_z=\mathbb{D}_0(X_z)$ and $\overline{\mathbf{D}}_z=\mathbb{D}_1(X_z)$. Let $0\subset\textup{Fil}^{1}\mathbb{D}(X_z)(S)\subset\mathbb{D}(X_z)(S)$ be the Hodge filtration. Let $\textup{Fil}^{1}\mathbf{D}_z(S)=\textup{Fil}^{1}\mathbb{D}(X_z)(S)\cap\mathbf{D}_z(S)$ and $\textup{Fil}^{1}\overline{\mathbf{D}}_z(S)=\textup{Fil}^{1}\mathbb{D}(X_z)(S)\cap\overline{\mathbf{D}}_z(S)$. The compatibility  with the $\mathcal{O}_{E}$-action implies that
\begin{equation*}
    \textup{Fil}^{1}\mathbb{D}(X_z)(S)=\textup{Fil}^{1}\mathbf{D}_z(S)\oplus\textup{Fil}^{1}\overline{\mathbf{D}}_z(S).
\end{equation*}
\par
Recall that there is a principal polarization $\lambda_z:X_z\rightarrow X_z^{\vee}$. It induces an isomorphism of crystals $\mathbb{D}(\lambda_z):\mathbb{D}(X_z)\rightarrow\mathbb{D}(X_z^{\vee})$. Notice that $\mathbb{D}(X_z^{\vee})\simeq\mathbb{D}(X_z)^{\vee}$ by \cite[Theorem 5.1.8]{BBM82}. Hence $\mathbb{D}(\lambda_z)$ induces a perfect symplectic pairing between crystals $\langle\cdot,\cdot\rangle:\mathbb{D}(X_z)\times\mathbb{D}(X_z)\rightarrow\mathcal{O}_{S}^{\textup{crys}}$ (cf. \cite[Chapter 2]{Jong93}). Moreover, the Hodge filtration $\textup{Fil}^{1}\mathbb{D}(X_z)(S)$ is totally isotropic under this symplectic pairing (cf. \cite[Proposition 5.1.10]{BBM82}).
\begin{lemma}
Let $R\rightarrow S$ be a surjection in $\textup{Alg}_{\ofb}$ whose kernel admits divided powers. The perfect symplectic pairing defined by the principal polarization $\lambda_z$ induces a perfect pairing 
    \begin{equation*}
        \langle\cdot,\cdot\rangle:\mathbf{D}_z(R)\times\overline{\mathbf{D}}_z(R)\rightarrow R.
    \end{equation*}
 Moreover, the intersection $\textup{Fil}^{1}\mathbf{D}_z(S)$ is a hyperplane in $\mathbf{D}_z(S)$, $\textup{Fil}^{1}\overline{\mathbf{D}}_z(S)$ is a line in $\overline{\mathbf{D}}_z(S)$, and they are annihilators of each other under the perfect $R$-linear pairing $\langle\cdot,\cdot\rangle$.
\end{lemma}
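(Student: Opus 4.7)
The plan is to split the statement into three pieces: (i) the symplectic form restricts to a perfect pairing between $\mathbf{D}_z(R)$ and $\overline{\mathbf{D}}_z(R)$; (ii) the Hodge filtration induces a hyperplane in $\mathbf{D}_z(S)$ and a line in $\overline{\mathbf{D}}_z(S)$; (iii) these two pieces are annihilators of each other under the pairing of (i) reduced to $S$.

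For (i) I would exploit that the Rosati involution attached to $\lambda_z$ acts on $\iota(\mathcal{O}_E)$ through $\sigma$, which translates, at the level of crystals, into the identity $\langle \iota(\alpha)v, w\rangle = \langle v, \iota(\sigma(\alpha))w\rangle$ for all $\alpha \in \mathcal{O}_E$ and all local sections $v,w$ of $\mathbb{D}(X_z)$. Taking $v, w \in \mathbf{D}_z(R)$, the two sides become $\phi_0(\alpha)\langle v,w\rangle$ and $\phi_1(\alpha)\langle v,w\rangle$, so $(\phi_0(\alpha) - \phi_1(\alpha))\langle v,w\rangle = 0$. Since $E/F$ is unramified I can choose $\alpha\in\mathcal{O}_E$ with $\phi_0(\alpha) - \phi_1(\alpha)\in\ofb^{\times}$ (for instance, a lift of a generator of $\mathbb{F}_{q^2}/\mathbb{F}_q$), and because $R$ is an $\ofb$-algebra this element stays a unit in $R$, forcing $\langle v,w\rangle = 0$; the same reasoning gives $\overline{\mathbf{D}}_z(R) \perp \overline{\mathbf{D}}_z(R)$. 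Because $\mathbf{D}_z(R)$ and $\overline{\mathbf{D}}_z(R)$ are rank-$n$ direct summands of the rank-$2n$ module $\mathbb{D}(X_z)(R)$ on which the full pairing is perfect, the cross pairing is necessarily perfect.

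For (ii) the key input is the signature $(1,n-1)$ condition, which says $\iota(\alpha)$ acts on $\textup{Lie}(X_z)$ with characteristic polynomial $(T - \phi_0(\alpha))(T - \phi_1(\alpha))^{n-1}$. Since the Hodge filtration respects the $\mathcal{O}_E$-isotypic decomposition, this forces the quotients $\mathbf{D}_z(S)/\textup{Fil}^1\mathbf{D}_z(S)$ and $\overline{\mathbf{D}}_z(S)/\textup{Fil}^1\overline{\mathbf{D}}_z(S)$ to be locally free of ranks $1$ and $n-1$ respectively, so $\textup{Fil}^1\mathbf{D}_z(S)$ is a hyperplane in $\mathbf{D}_z(S)$ and $\textup{Fil}^1\overline{\mathbf{D}}_z(S)$ is a line in $\overline{\mathbf{D}}_z(S)$ in the sense of Definition \ref{plane-and-line}. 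For (iii), base-change of the perfect pairing of step (i) along $R \to S$ yields a perfect pairing $\mathbf{D}_z(S)\times\overline{\mathbf{D}}_z(S)\to S$; the total isotropy of $\textup{Fil}^1\mathbb{D}(X_z)(S)$ under the symplectic form then gives $\langle \textup{Fil}^1\mathbf{D}_z(S), \textup{Fil}^1\overline{\mathbf{D}}_z(S)\rangle = 0$, so $\textup{Fil}^1\mathbf{D}_z(S)$ sits inside the annihilator of $\textup{Fil}^1\overline{\mathbf{D}}_z(S)$, which by Lemma \ref{corre} is a unique hyperplane. Being a hyperplane itself, $\textup{Fil}^1\mathbf{D}_z(S)$ must coincide with that annihilator, and symmetry between the two summands handles the other direction.

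The main subtlety I expect is verifying the local freeness used implicitly throughout, i.e., that $\textup{Fil}^1\mathbf{D}_z(S)$ is genuinely a rank-$(n-1)$ locally free direct summand rather than just a finitely generated submodule of the correct rank. This hinges on the compatibility of the Hodge filtration with the idempotent $\mathcal{O}_E$-decomposition and on the kernel of $R\to S$ admitting divided powers so that Grothendieck--Messing theory applies cleanly and the crystal evaluations base-change along $R\to S$ in the expected way; both are built into the hypotheses. Everything else is formal linear algebra on $R$ and $S$ combined with the idempotent splitting of $\mathcal{O}_E\otimes_{\mathcal{O}_F}R$.
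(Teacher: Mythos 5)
Your proposal is correct and follows essentially the same route as the paper: use the Rosati compatibility $\langle\iota(\alpha)v,w\rangle=\langle v,\iota(\sigma(\alpha))w\rangle$ together with the $\phi_0/\phi_1$-eigenspace decomposition to kill the diagonal blocks (the paper fixes $\varepsilon\in\mathcal{O}_E^\times$ with $\sigma(\varepsilon)=-\varepsilon$ and cancels $2\varepsilon$, while you pick any $\alpha$ with $\phi_0(\alpha)-\phi_1(\alpha)$ a unit — an equivalent maneuver), then read ranks off the signature condition, then combine total isotropy of $\textup{Fil}^1\mathbb{D}(X_z)(S)$ with Lemma \ref{corre} and a rank count to get the mutual-annihilator statement. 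The one point where you are slightly more careful than the paper is in noting explicitly that the final annihilator claim concerns the $S$-linear pairing obtained by base change along $R\to S$, which is the correct reading of the statement.
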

\begin{proof}
    The principal polarization $\lambda_{z}$ induces a perfect symplectic $R$-linear pairing
    \begin{equation*}
        \langle\cdot,\cdot\rangle:\mathbb{D}(X_z)(R)\times\mathbb{D}(X_z)(R)\rightarrow R.
    \end{equation*}
    By definition, for all elements $a\in\mathcal{O}_{E}$, we have $\langle \mathbb{D}(\iota_z(a))(x),y\rangle=\langle x,\mathbb{D}(\iota_z(\sigma(a)))(y)\rangle$ for all $x,y\in\mathbb{D}(X_z)(R)$. Let $\varepsilon\in\mathcal{O}_{E}^{\times}$ be an element such that $\sigma(\varepsilon)=-\varepsilon$, we get  
    \begin{align*}
        \langle \varepsilon x,y\rangle&=\langle x,-\varepsilon y\rangle,\,\,\textup{for all}\,\,x,y\in\mathbf{D}_z(R);\\
        \langle -\varepsilon x,y\rangle&=\langle x,\varepsilon y\rangle,\,\,\textup{for all}\,\,x,y\in\overline{\mathbf{D}}_z(R).
    \end{align*}
    Hence $\langle  x,y\rangle=0$ if $x,y\in\mathbf{D}_z(R)$ or $x,y\in\overline{\mathbf{D}}_z(R)$. Therefore the perfect symplectic $R$-linear pairing $\langle\cdot,\cdot\rangle$ restricts to a perfect $R$-linear pairing $\langle\cdot,\cdot\rangle:\mathbf{D}_z(R)\times\overline{\mathbf{D}}_z(R)\rightarrow R$.
    \par
    For the second claim, notice that there is an exact sequence of locally free $S$-modules,
    \begin{equation*}
        0\longrightarrow\textup{Fil}^{1}\mathbb{D}(X_z)(S)\longrightarrow\mathbb{D}(X_z)(S)\longrightarrow\textup{Lie}\,X_z\longrightarrow0.
    \end{equation*}
    The signature $(1,n-1)$ condition on $\textup{Lie}\,X_z$ implies that the intersection $\textup{Fil}^{1}\mathbf{D}_z(S)$ is a hyperplane in $\mathbf{D}_z(S)$, $\textup{Fil}^{1}\overline{\mathbf{D}}_z(S)$ is a line in $\overline{\mathbf{D}}_z(S)$. We also know that $\textup{Fil}^{1}\mathbf{D}_z(S)$ is totally isotropic under the pairing $\langle\cdot,\cdot\rangle$, hence $\textup{Fil}^{1}\mathbf{D}_z(S)$ and $\textup{Fil}^{1}\overline{\mathbf{D}}_z(S)$ are annihilators of each other.
\end{proof}
Let $\mathbb{D}(\overline{\mathbb{Y}})$ be the (covariant) Dieudonne module of $\overline{\mathbb{Y}}$ over $\ofb$, it is a free $\ofb$-module of rank $2$. We have a decomposition $\mathbb{D}(\overline{\mathbb{Y}})=\mathbb{D}_0(\overline{\mathbb{Y}})\oplus\mathbb{D}_1(\overline{\mathbb{Y}})$, both $\mathbb{D}_0(\overline{\mathbb{Y}})$ and $\mathbb{D}_1(\overline{\mathbb{Y}})$ are free $\ofb$-module of rank $1$ by the signature condition. Let $\overline{1}_0$ be an $\ofb$-generator of $\mathbb{D}_0(\overline{\mathbb{Y}})$. Let $\overline{1}_1=\varpi^{-1}\Phi\overline{1}_0$ where $\Phi$ is the Frobenius morphism on the Dieudonne module $\mathbb{D}(\overline{\mathbb{Y}})$. We have $\mathbb{D}_1(\overline{\mathbb{Y}})=\ofb\cdot\overline{1}_1$ (\cite[Remark 2.5]{KR11}).
\par
For a point $z\in\mathcal{N}^{u}(\mathbb{F})$. The (covariant) Dieudonne module $\mathbb{D}(X_{z})$ of $X_{z}$ also has a decomposition $\mathbb{D}_0(X_{z})\oplus\mathbb{D}_1(X_{z})$ as $\ofb$-module. Let $\mathbf{D}_z=\mathbb{D}_0(X_{z})$ and $\overline{\mathbf{D}}_z=\mathbb{D}_1(X_{z})$. Let $x\in\mathbb{V}^{u}$ be a special quasi-homomorphism, it induces an $\Breve{F}$-linear homomorphism $\mathbb{D}(x):\mathbb{D}(\overline{\mathbb{Y}})\otimes_{\ofb}\Breve{F}\rightarrow\mathbb{D}(X_{z})\otimes_{\ofb}\Breve{F}$, sending $\mathbb{D}_i(\overline{\mathbb{Y}})\otimes_{\ofb}\Breve{F}$ to $\mathbb{D}_i(X_{z})\otimes_{\ofb}\Breve{F}$ for $i=0,1$. Define
\begin{equation}
    i_{\textup{crys},z}: \mathbb{V}^{u}\longrightarrow\mathbf{D}_{z}\otimes_{\ofb}\Breve{F},\,\,
    x\longmapsto x_{\textup{crys},z}\coloneqq\mathbb{D}(x)(\overline{1}_0)\in\mathbf{D}_{z}\otimes_{\ofb}\Breve{F}.
    \label{crystal-unitary}
\end{equation}
\par
More generally, for $S\in\textup{Alg}_{\ofb}$, let $z\in\mathcal{Z}(L)(S)$. For an arbitrary element $x\in L$, there is an isogeny $\overline{Y}_{S}\rightarrow X_z$, where $\overline{Y}$ is the canonical lift of $\overline{Y}$ and $\overline{Y}_S=\overline{Y}\times_{\ofb}S$, lifting the element $x\in\mathbb{V}^{u}=\textup{Hom}^{\circ}_{\mathcal{O}_E}(\overline{\mathbb{Y}},\mathbb{X})$.   Let $\mathbb{D}(\overline{Y}_S)$ be the Dieudonne crystal of $\overline{Y}_S$, then $\mathbb{D}(\overline{Y}_S)\simeq\mathbb{D}(\overline{\mathbb{Y}})\otimes_{\ofb}\mathcal{O}_{S}^{\textup{crys}}$. Similarly, we have a decomposition $\mathbb{D}(\overline{Y}_S)=\mathbb{D}_0(\overline{Y}_S)\oplus\mathbb{D}_1(\overline{Y}_S)$, here $\mathbb{D}_i(\overline{Y}_S)\simeq\mathbb{D}_i(\overline{\mathbb{Y}})\otimes_{\ofb}\mathcal{O}_{S}^{\textup{crys}}$ for $i=0,1$, then the element $\overline{1}_0$ we defined above is also a generator of the $\mathcal{O}_{S}^{\textup{crys}}$-module $\mathbb{D}_0(\overline{Y}_S)$.
\par
Moreover, the Hodge filtration on $\mathbb{D}(\overline{Y}_S)$ equals to $0\subset\mathbb{D}_0(\overline{Y}_S)\subset\mathbb{D}(\overline{Y}_S)$. Let $R\rightarrow S$ be a surjection in $\textup{Alg}_{\ofb}$ whose kernel admits a nilpotent $\mathcal{O}_F$-pd-structure, the element $x$ induces a morphism of crystals $\mathbb{D}(x):\mathbb{D}(\overline{Y}_S)\rightarrow\mathbb{D}(X_z)$. Let $x_{\textup{crys},z}(R)=\mathbb{D}(x)(\overline{1}_0)\in\mathbf{D}_z(R)$.
\begin{lemma}
Let $R\rightarrow S$ be a surjection in $\textup{Alg}_{\ofb}$ whose kernel admits nilpotent $\mathcal{O}_{F}$-pd-structure.\\
    (i)\,Let $z_0\in\mathcal{N}^{u}(\mathbb{F})$ and $\widehat{\mathcal{N}}^{u}_{z_0}$ be the completion of $\mathcal{N}^{u}$ at $z_0$. Let $z\in\widehat{\mathcal{N}}^{u}_{z_0}(S)$. Then there is a natural bijection
\begin{equation*}
    \left\{\textup{Lifts}\,\, \tilde{z}\in\widehat{\mathcal{N}}^{u}_{z_0}(R)\,\,\textup{of}\,\,z. \right\}\stackrel{\sim}\longrightarrow\left\{\textup{Lines in $\overline{\mathbf{D}}_{z}(R)$}\,\,\textup{lifting the line $\textup{Fil}^{1}\overline{\mathbf{D}}_z(S)$}\right\}.
\end{equation*}
which maps a lift $\Tilde{z}\in\widehat{\mathcal{N}}^{u}_{z_0}(R)$ to the line $\textup{Fil}^{1}\overline{\mathbf{D}}_{\Tilde{z}}(R)\subset\overline{\mathbf{D}}_{\Tilde{z}}(R)\simeq\overline{\mathbf{D}}_z(R)$.
    \\
    (ii)\,Let $L\subset\mathbb{V}^{u}$ be a $\mathcal{O}_{F}$-lattice of rank $r\geq1$. Let $z_0\in\mathcal{Z}(L)(\mathbb{F})$ and $\widehat{\mathcal{Z}(L)}_{z_0}$ be the completion of $\mathcal{Z}(L)$ at $z_0$, then there is a natural bijection compatible with the bijection in (i)
\begin{equation*}
      \left\{\textup{Lifts}\,\, \Tilde{z}\in\widehat{\mathcal{Z}(L)}_{z_0}(R)\,\,\textup{of}\,\,z. \right\}\stackrel{\sim}\longrightarrow 
    \left\{\begin{array}{ll}
     \textup{Lines in $\overline{\mathbf{D}}(R)$ lifting $\textup{Fil}^{1}\overline{\mathbf{D}}_z(S)$ and}\\
    \textup{orthogonal to $x_{\textup{crys},z}(R)$ for all $x\in L$.}\\
    \textup{under the pairing $\langle\cdot,\cdot\rangle$.}
    \end{array}\right\}.
\end{equation*}
\label{deformu}
\end{lemma}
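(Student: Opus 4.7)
The plan is to apply the hermitian variant of Grothendieck--Messing deformation theory in the setting of nilpotent $\mathcal{O}_F$-pd-thickenings, as developed in \cite{FGL08}. The basic principle is that lifting a formal $\varpi$-divisible group $X_z$ from $S$ to $R$ along such a thickening is equivalent to lifting the Hodge filtration $\textup{Fil}^{1}\mathbb{D}(X_z)(S)$ to a direct summand $\textup{Fil}^{1}\mathbb{D}(X_z)(R)\subset\mathbb{D}(X_z)(R)$; to lift the additional structure of a hermitian $\mathcal{O}_{E}$-module one further requires that the lifted filtration be preserved by the $\mathcal{O}_{E}$-action and totally isotropic for the symplectic pairing induced by $\lambda_z$. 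The framing quasi-isogeny $\rho_z$ extends canonically since $\rho_z$ is defined over the common special fiber, so no extra data is needed there.

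To prove (i), I would observe that $\mathcal{O}_{E}$-equivariance forces the lifted filtration to split as
\begin{equation*}
    \textup{Fil}^{1}\mathbb{D}(X_{z})(R)=\textup{Fil}^{1}\mathbf{D}_{\tilde{z}}(R)\oplus\textup{Fil}^{1}\overline{\mathbf{D}}_{\tilde{z}}(R),
\end{equation*}
where $\textup{Fil}^{1}\mathbf{D}_{\tilde{z}}(R)$ is a hyperplane in $\mathbf{D}_z(R)$ lifting $\textup{Fil}^{1}\mathbf{D}_z(S)$ and $\textup{Fil}^{1}\overline{\mathbf{D}}_{\tilde{z}}(R)$ is a line in $\overline{\mathbf{D}}_z(R)$ lifting $\textup{Fil}^{1}\overline{\mathbf{D}}_z(S)$. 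The previous lemma already shows that these two pieces are annihilators of each other under the perfect pairing $\langle\cdot,\cdot\rangle:\mathbf{D}_z(R)\times\overline{\mathbf{D}}_z(R)\to R$. By Lemma \ref{corre}, specifying the hyperplane $\textup{Fil}^{1}\mathbf{D}_{\tilde{z}}(R)$ is equivalent to specifying its annihilator line $\textup{Fil}^{1}\overline{\mathbf{D}}_{\tilde{z}}(R)$, so the lifting data reduces to one line in $\overline{\mathbf{D}}_z(R)$ lifting $\textup{Fil}^{1}\overline{\mathbf{D}}_z(S)$, which gives the bijection.

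For (ii), I would use the standard crystalline criterion: a quasi-homomorphism $x:\overline{\mathbb{Y}}\to\mathbb{X}$ deforms to a homomorphism $\overline{Y}_R\to X_{\tilde{z}}$ over $R$ if and only if the induced crystal morphism $\mathbb{D}(x):\mathbb{D}(\overline{Y}_R)\to\mathbb{D}(X_z)(R)$ sends the Hodge filtration $\textup{Fil}^{1}\mathbb{D}(\overline{Y}_R)=\mathbb{D}_0(\overline{Y}_R)$ into $\textup{Fil}^{1}\mathbb{D}(X_{\tilde{z}})(R)$. Because $\mathbb{D}_0(\overline{Y}_R)$ is generated by $\overline{1}_0$ and $\mathbb{D}(x)(\overline{1}_0)=x_{\textup{crys},z}(R)\in\mathbf{D}_z(R)$, the condition becomes $x_{\textup{crys},z}(R)\in\textup{Fil}^{1}\mathbf{D}_{\tilde{z}}(R)$. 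By the annihilator correspondence used in (i), this says exactly that the chosen line $\textup{Fil}^{1}\overline{\mathbf{D}}_{\tilde{z}}(R)$ is orthogonal to $x_{\textup{crys},z}(R)$ under $\langle\cdot,\cdot\rangle$. Imposing this condition for every $x\in L$ (or equivalently for a generating set of the $\mathcal{O}_F$-lattice $L$) produces the bijection claimed in (ii), compatible with (i) by construction.

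The main subtlety I expect is to make sure that the $\mathcal{O}_F$-pd variant of Grothendieck--Messing applies uniformly for an arbitrary finite extension $F/\mathbb{Q}_p$, not just for $F=\mathbb{Q}_p$; this is precisely why the hypothesis on the kernel of $R\to S$ is phrased in terms of nilpotent $\mathcal{O}_F$-pd-structures rather than ordinary divided powers, and I would invoke \cite[$\S$B.5]{FGL08} for this input. A minor bookkeeping point will be checking that all the identifications $\mathbf{D}_{\tilde{z}}(R)\simeq\mathbf{D}_z(R)$ and $\overline{\mathbf{D}}_{\tilde{z}}(R)\simeq\overline{\mathbf{D}}_z(R)$ used in the statement are the canonical ones coming from the crystal structure, so that the resulting map of sets is well defined and functorial in $R\to S$.
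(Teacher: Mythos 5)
Your proposal takes essentially the same approach as the paper. Both proofs rest on the $\mathcal{O}_F$-pd variant of Grothendieck--Messing deformation theory, the $\mathcal{O}_E$-equivariant decomposition $\mathbb{D}(X_z)=\mathbf{D}_z\oplus\overline{\mathbf{D}}_z$, the fact that $\textup{Fil}^1\mathbf{D}_z$ and $\textup{Fil}^1\overline{\mathbf{D}}_z$ are mutual annihilators under the perfect pairing induced by $\lambda_z$ (so that by Lemma~\ref{corre} the lifting data reduces to the line), and, for (ii), the crystalline criterion that a homomorphism lifts across a pd-thickening if and only if $\mathbb{D}(x)$ respects Hodge filtrations, which becomes $x_{\textup{crys},z}(R)\in\textup{Fil}^1\mathbf{D}_{\tilde z}(R)$, i.e., orthogonality of the lifted line to $x_{\textup{crys},z}(R)$. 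The paper spells out the inverse map (take the annihilator hyperplane, form $H=L\oplus P$, note $\mathcal{O}_E$-stability and that the signature condition is automatic) a bit more explicitly than you do, but there is no difference in substance.
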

\begin{proof}
    We first prove (i). We construct the reverse of the map stated in the lemma. Let $L\subset\overline{\mathbf{D}}_{z}(R)$ be a line lifting the line $\textup{Fil}^{1}\overline{\mathbf{D}}_z(S)$. Let $P\subset\mathbf{D}_z(R)$ be the annihilator of $L$ under the perfect pairing $\langle\cdot,\cdot\rangle:\mathbb{D}(X_z)(R)\times\mathbb{D}(X_z)(R)\rightarrow R$. The $R$-module $P$ is a hyperplane by Lemma \ref{plane-and-line}, lifting the hyperplane $\textup{Fil}^{1}\mathbf{D}_z(S)$. Let $H=L\oplus P\subset\mathbb{D}(X_z)(R)$, it is a locally free $S$-module of rank $n$ lifting the Hodge filtration $\textup{Fil}^{1}\mathbb{D}(X_z)(S)$. By Grothendieck-Messing deformation theory, it corresponds to a $\varpi$-divisible group $\Tilde{X}_z$ over $R$ lifting $X_z$. Notice that the Hodge filtration $0\subset H\subset\textup{Fil}^{1}\mathbb{D}(X_z)(R)$ is stable under $\mathcal{O}_{E}$-action, hence $\iota_{z}$ lifts to $\Tilde{\iota}_z:\mathcal{O}_{E}\rightarrow\textup{End}(\Tilde{X}_z)$. The signature $(1,n-1)$ condition on $\textup{Lie}\,\tilde{X}_z$ is automatic by construction. Hence we construct a point $\Tilde{z}\in\widehat{\mathcal{N}}^{u}_{z_0}(R)$ which lifts $z$. It's easy to see that this map is the reverse of the map stated in the lemma.
    \par
    Now we prove (ii). Let $\Tilde{z}\in\widehat{\mathcal{N}}^{u}_{z_0}(R)$ be a lift of the point $z$. If $\Tilde{z}\in\widehat{\mathcal{Z}(L)}_{z_0}(R)$. Let $x\in L$ be an arbitrary element. The morphism $\mathbb{D}(x)(R)$ preserves the Hodge filtrations, i.e., it maps $\mathbb{D}_0(\overline{Y}_R)(R)\simeq\mathbb{D}_0(\overline{Y}_S)(R)$ to $\textup{Fil}^{1}\mathbb{D}(X_{\tilde{z}})(R)$. Hence $x_{\textup{crys},z}(R)=D(x)(R)(\overline{1}_0)\in\textup{Fil}^{1}\mathbb{D}(X_{\tilde{z}})(R)\cap\mathbf{D}_{\tilde{z}}(R)=\textup{Fil}^{1}\mathbf{D}_{\Tilde{z}}(R)$. Notice that the hyperplane $\textup{Fil}^{1}\mathbf{D}_{\Tilde{z}}(R)$ is the annihilator of the line $\textup{Fil}^{1}\overline{\mathbf{D}}_{\Tilde{z}}(R)$. Therefore the line corresponding to $\Tilde{z}$ under the bijection in (i) is orthogonal to $x_{\textup{crys},z}(R)$ for all $x\in L$. 
    \par
    If we have a line $L\subset\overline{\mathbf{D}}_{z}(R)$ lifting the line $\textup{Fil}^{1}\overline{\mathbf{D}}_z(S)$ and orthogonal to $x_{\textup{crys},z}(R)$ for all $x\in L$. Let $P\subset\mathbf{D}_z(R)$ be the annihilator of $L$ under the perfect pairing. The orthogonal condition is equivalent to $x_{\textup{crys},z}(R)\in P$ for all $x\in L$. Let $\Tilde{z}\in\widehat{\mathcal{N}}^{u}_{z_0}(R)$ be the point corresponding to the line $L$. The Hodge filtration of $X_{\Tilde{z}}$ is given by $0\subset L\oplus P\subset\mathbb{D}(X_z)(R)$. Let $x\in L$ be an element, the morphism $\mathbb{D}(x)(R):\mathbb{D}(\overline{Y}_R)(R)\rightarrow\mathbb{D}(X_{\Tilde{z}})(R)$ preserves the Hodge filtration since $\mathbb{D}(x)(\overline{1}_0)=x_{\textup{crys},z}(R)\in P\subset\textup{Fil}^{1}\mathbb{D}(X_{\Tilde{z}})(R)$. Hence $x$ is an isogeny for all $x\in L$, i.e., $\Tilde{z}\in\widehat{\mathcal{Z}(L)}_{z_0}(R)$.
\end{proof}

\subsection{The GSpin case}
\label{cyrso}
\subsubsection{Crystals over $\mathcal{N}^{o}$}
\label{crystal-over-ortho}
There is a rank $m$ $\mathcal{O}_{\mathcal{N}^{o}}^{\textup{crys}}$-module crystal $\mathbf{V}_{\textup{crys}}$ in the site $\textup{NCRIS}_{\zpb}(\mathcal{N}^{o}/\textup{Spf}\,\zpb)$. The construction of $\mathbf{V}_{\textup{crys}}$ is based on the element $\bpi$ in (\ref{pro-local}). By \cite[Theorem 4.9.1]{Kim18}, we obtain from $\bpi$ a universal crystalline Tate tensor $\bpi_{\textup{crys}}$ on the universal $p$-divisible group $X^{\textup{univ}}$ over $\mathcal{N}^{o}$, which induces a projector of crystals
\begin{equation*}
    \bpi_{\textup{crys}}:\textup{End}(\mathbb{D}(X^{\textup{univ}}))\rightarrow\textup{End}(\mathbb{D}(X^{\textup{univ}})).
\end{equation*}
Let $\vcrys$ be the image of this projector, it is a rank $m$ $\mathcal{O}_{\mathcal{N}^{o}}^{\textup{crys}}$-module crystal (\cite[$\S$4.3]{LZ22b}).
\par
For any $S\in\textup{Alg}_{\zpb}$ and any $z\in\mathcal{N}^{o}(S)$, we similarly have a projector of crystals
\begin{equation*}
    \bpi_{\textup{crys},z}:\textup{End}(\mathbb{D}(X_z))\rightarrow\textup{End}(\mathbb{D}(X_z)).
\end{equation*}
whose image $\vcrysz := \textup{im}(\bpi_{\textup{crys},z})$ is a crystal of $\mathcal{O}_{S}^{\textup{crys}}$-modules of rank $m$. Here $X_z$ denotes the $p$-divisible group over $S$ obtained by the base change of $X^{\textup{univ}}$ to $z$.
\par
Let $R\rightarrow S$ be a surjection in $\textup{Alg}_{\zpb}$ whose kernel admits nilpotent divided powers, we have $\vcrysz(R)$ a projective $R$-module of rank $m$. It is equipped with a non-degenerate symmetric $R$-bilinear form $(\cdot,\cdot)$. The projective $S$-module $\vcrysz(S)$ is equipped with a Hodge filtration $\textup{Fil}^{1}\vcrysz(S)$, which is an isotropic $S$-line (\cite[$\S$4.3]{LZ22b}). The $S$-line $\textup{Fil}^{1}\vcrysz(S)$ is isotropic because it is contained in $\textup{Fil}^{1}\textup{End}(\mathbb{D}(X_z))$ which consists of endomorphisms mapping the Hodge filtration $\textup{Fil}^{1}\mathbb{D}(X_z)$ to $0$, and $\mathbb{D}(X_z)$ to $\textup{Fil}^{1}\mathbb{D}(X_z)$. Therefore the composition $v\circ v=0$ for every element $v\in \textup{Fil}^{1}\vcrysz(S)$.
\par
Moreover, let $L\subset\mathbb{V}^{o}$ be a subset, for $z\in\mathcal{Z}(L)(S)$ and an arbitrary element $x\in L$, the crystalline realization $x_{\textup{crys},z}(R)\in\textup{End}(\mathbb{D}(X_z))(R)$ lies in the image of $\bpi_{\textup{crys},z}(R)$, hence it is an element in $\vcrysz(R)$ (cf. \cite[$\S$4.6]{LZ22b}).
\begin{lemma}
    Let $R\rightarrow S$ be a surjection in $\textup{Art}_{\zpb}$ whose kernel admits nilpotent divided powers.\\
    (i)\,Let $z_0\in\mathcal{N}^{o}(\mathbb{F})$ and $\widehat{\mathcal{N}}^{o}_{z_0}$ be the completion of $\mathcal{N}^{o}$ at $z_0$. Let $z\in\widehat{\mathcal{N}}^{o}_{z_0}(S)$. Then there is a natural bijection
    \begin{equation*}
        \left\{\textup{Lifts}\,\, \tilde{z}\in\widehat{\mathcal{N}}^{o}_{z_0}(R)\,\,\textup{of}\,\,z. \right\}\stackrel{\sim}\longrightarrow\left\{\textup{Isotropic $R$-lines in}\,\,\mathbf{V}_{\textup{crys},z}(R)\,\,\textup{lifting}\,\,\textup{Fil}^{1}\mathbf{V}_{\textup{crys},z}(S).\right\},
    \end{equation*}
    which maps a lift $\Tilde{z}\in\widehat{\mathcal{N}}^{o}_{z_0}(R)$ to the Hodge filtration $\textup{Fil}^{1}\mathbf{V}_{\textup{crys},\Tilde{z}}(R)\subset\mathbf{V}_{\textup{crys},\Tilde{z}}(R)\simeq\mathbf{V}_{\textup{crys},z}(R)$.
    \\
    (ii)\,Let $L\subset\mathbb{V}^{o}$ be a $\mathbb{Z}_{p}$-lattice of rank $r\geq1$. Let $z_0\in\mathcal{Z}(L)(\mathbb{F})$ and $\widehat{\mathcal{Z}(L)}_{z_0}$ be the completion of $\mathcal{Z}(L)$ at $z_0$, then there is a natural bijection compatible with the bijection in (i)
\begin{equation*}
      \left\{\textup{Lifts}\,\, \tilde{z}\in\widehat{\mathcal{Z}(L)}_{z_0}(R)\,\,\textup{of}\,\,z. \right\}\stackrel{\sim}\longrightarrow 
    \left\{\begin{array}{ll}
     \textup{Isotropic $R$-lines in $\mathbf{V}_{\textup{crys},z}(R)$ lifting $\textup{Fil}^{1}\vcrysz(S)$}\\
    \textup{and orthogonal to $x_{\textup{crys},z}(R)$ for any $x\in L$.}
    \end{array}\right\}.
\end{equation*}
\label{deformo}
\end{lemma}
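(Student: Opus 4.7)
The plan is to follow the same line of argument as in the proof of Lemma \ref{deformu}, replacing the ad hoc Grothendieck-Messing deformation theory for hermitian $\mathcal{O}_E$-modules with its Hodge-type refinement. The latter, due to Faltings and Kim and recast for GSpin Rapoport-Zink spaces by Madapusi (\cite[Proposition 5.16]{Mad16}, see also \cite[Lemma 4.6.2]{LZ22b}), classifies lifts of the pair $(X_z,(t_\alpha)_{\alpha\in I})$ from $S$ to $R$ by lifts of the Hodge filtration $\textup{Fil}^{1}\mathbb{D}(X_z)(S)$ to a direct summand of $\mathbb{D}(X_z)(R)$ stable under all the crystalline Tate tensors. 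Since the tensors $(s_\alpha)_{\alpha\in I}$ cut out $G=\textup{GSpin}(V)$ and the projector $\bpi$ reconstructs $V\subset\textup{End}(C(V))$, this tensor compatibility is equivalent to the induced filtration on $\vcrysz(R)$ being a well-defined $R$-line.

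For part (i), the forward map sends a lift $\tilde{z}$ to the Hodge filtration $\textup{Fil}^{1}\mathbf{V}_{\textup{crys},\tilde{z}}(R)$; its isotropy was already explained in $\S$\ref{crystal-over-ortho}. Conversely, given an isotropic $R$-line $L\subset\vcrysz(R)$ lifting $\textup{Fil}^{1}\vcrysz(S)$, the Hodge-type deformation theorem produces a unique lift $\tilde{z}\in\widehat{\mathcal{N}}^{o}_{z_0}(R)$ whose induced Hodge filtration on $\mathbf{V}_{\textup{crys},\tilde{z}}(R)$ recovers $L$; the two constructions are visibly mutual inverses.

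For part (ii), given $\tilde{z}\in\widehat{\mathcal{N}}^{o}_{z_0}(R)$ lifting $z$ and $x\in L$, functoriality of the Dieudonn\'e crystal implies that $x$ deforms to an endomorphism of $X_{\tilde{z}}$ if and only if $x_{\textup{crys},z}(R)\in\vcrysz(R)$, acting by composition on $\mathbb{D}(X_{\tilde{z}})(R)$, preserves the Hodge filtration $\textup{Fil}^{1}\mathbb{D}(X_{\tilde{z}})(R)$. Using the basic Clifford algebra identity $v_1\circ v_2+v_2\circ v_1=2(v_1,v_2)\cdot\textup{id}$ inside $\textup{End}(C(V))$, applied with $v_1=x_{\textup{crys},z}(R)$ and $v_2$ a generator of the isotropic lifted line, one checks that this preservation condition is equivalent to $x_{\textup{crys},z}(R)$ being orthogonal to the line corresponding to $\tilde{z}$ under the symmetric bilinear form on $\vcrysz(R)$. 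This gives the desired bijection, compatible with (i) by construction.

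The main technical obstacle is the translation between the two filtrations: the full Hodge filtration on $\mathbb{D}(X_{\tilde{z}})(R)$ and the induced isotropic line in $\vcrysz(R)$. The Clifford algebra identity above is the key algebraic fact making this translation rigorous, and once it is in place the rest reduces to routine bookkeeping in crystalline Dieudonn\'e theory with Tate tensors, mirroring the argument already carried out in the unitary case.
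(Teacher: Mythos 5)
The paper's proof of Lemma \ref{deformo} is a one-line citation to \cite[Lemma 4.6.2]{LZ22b}, so your proposal is taking a genuinely different route by trying to actually reconstruct the argument along the lines of the unitary case.

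Your outline is in the right direction and the Clifford identity you invoke is indeed the key algebraic fact, but the central step is left as an acknowledged gap rather than filled in. In part (i) you assert that ``the tensor compatibility is equivalent to the induced filtration on $\vcrysz(R)$ being a well-defined $R$-line''; this needs to be shown, and it rests on the explicit description of the Hodge cocharacter on $C^{\vee}$. In part (ii) the claim that $x_{\textup{crys},z}(R)$ preserves $\textup{Fil}^{1}\mathbb{D}(X_{\tilde{z}})(R)$ if and only if $(x_{\textup{crys},z}(R),l)=0$ requires knowing precisely how the isotropic line $R\cdot l\subset\vcrysz(R)$ determines the Hodge filtration of the lifted $p$-divisible group. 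The fact you need is that $\textup{Fil}^{1}\mathbb{D}(X_{\tilde{z}})(R)=\ker(l)=\textup{im}(l)$ (with $l$ acting on $\mathbb{D}(X_{\tilde{z}})(R)\simeq C^{\vee}_{R}$ through the embedding $V\subset\textup{End}(C^{\vee})$). Granting that, the Clifford identity $f\circ l+l\circ f=2(f,l)\cdot\textup{id}$ does give the equivalence: if $(f,l)=0$ then $f$ anticommutes with $l$, hence preserves $\ker(l)$; conversely if $f$ preserves $\ker(l)=\textup{im}(l)$ then $l\circ f\circ l=0$, so composing the Clifford identity with $l$ on the right and using $l^{2}=0$ gives $2(f,l)\,l=0$, and since $p\neq2$ and $l$ is a nonzero direct summand acting faithfully this forces $(f,l)=0$. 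You say this translation ``reduces to routine bookkeeping,'' but it is exactly the content of the cited result of Madapusi Pera and Li--Zhang, not something that follows automatically from the Hodge-type deformation theorem plus functoriality. In short, the argument can be made to work, but as written the proof delegates its crux to the same references the paper simply cites, without actually carrying it out.
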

\begin{proof}
    This is \cite[Lemma 4.6.2]{LZ22b}.
\end{proof}
Especially, let $z\in\mathcal{N}^{o}(\mathbb{F})$ be a point. Taking successive surjection $\zpb/(p^{n+1})\rightarrow\zpb/(p^{n})\rightarrow\mathbb{F}$, we define $\mathbf{V}_{z}\coloneqq\varprojlim\limits_{n}\mathbf{V}_{\textup{crys},z}(\zpb/p^{n})$, it is a free $\zpb$-module of rank $m$. 
\par
We define a map $i_{\textup{crys},z}:\mathbb{V}^{o}\rightarrow\mathbf{V}_{z}\otimes_{\zpb}\qpb$ in the following way: For an element $x\in \mathbb{V}^{o}$, there exists an integer $k$ such that $p^{k}x$ lifts to an isogeny of $X_{z}$ under the quasi-isogeny $\rho_{z}:\mathbb{X}_{V}\rightarrow X_z$. Let $x^{\prime}=p^{k}x$, then $z\in\mathcal{Z}(x^{\prime})(\mathbb{F})$. Let $x^{\prime}_{\textup{crys},z}=\varprojlim\limits_{n}x^{\prime}_{\textup{crys},z}(\zpb/p^{n})\in\mathbf{V}_{z}$, define 
\begin{equation}
    i_{\textup{crys},z}(x)= p^{-k}x^{\prime}_{\textup{crys},z}.
    \label{crystalline-gspin}
\end{equation}
The definition is independent of the integer $k$ we choose. For any $x\in\mathbb{V}^{o}$, denote by $x_{\textup{crys},z}$ the element $i_{\textup{crys},z}(x)$. In this way we defined a map $i_{\textup{crys},z}:\mathbb{V}^{o}\rightarrow\mathbf{V}_{z}\otimes_{\zpb}\qpb$.
\begin{lemma}
    Let $z\in\mathcal{N}^{o}(\mathbb{F})$ be a point. Let $L\subset\mathbb{V}^{o}$ be a subset. Then $z\in\mathcal{Z}(L)(\mathbb{F})$ if and only if $i_{\textup{crys},z}(L)\subset\mathbf{V}_z$.
    \label{gspin-simple-k-points}
\end{lemma}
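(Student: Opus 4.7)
My strategy is to unwind the definition of $i_{\textup{crys},z}$ and bridge the two sides via Dieudonn\'e theory, using the properties of the projector $\bpi_{\textup{crys},z}$ recalled in $\S$\ref{crystal-over-ortho}. The forward direction is almost immediate: suppose $z \in \mathcal{Z}(L)(\mathbb{F})$ and $x \in L$. Then $\rho_z \circ x \circ \rho_z^{-1}$ extends to a genuine endomorphism of $X_z$, so by the paragraph preceding Lemma \ref{deformo} (applied with $S=\mathbb{F}$ and $R=\zpb/p^n$, the PD-thickening given by the standard divided powers on $p$, valid since $p>2$), the crystalline realization $x_{\textup{crys},z}(\zpb/p^n)$ lies in $\vcrysz(\zpb/p^n)$ for every $n$. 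Passing to the inverse limit over $n$ gives $x_{\textup{crys},z}\in\mathbf{V}_z$, and this coincides with $i_{\textup{crys},z}(x)$ since we may take $k=0$ in \eqref{crystalline-gspin}.

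For the reverse direction, fix $x\in L$ and choose $k\geq 0$ with $y\coloneqq p^k x$ lifting to an honest endomorphism of $X_z$ through the framing $\rho_z$; such $k$ exists because $\rho_z$ is a quasi-isogeny. Applying the forward direction to $y$ gives $y_{\textup{crys},z}\in\mathbf{V}_z$, and the definition \eqref{crystalline-gspin} reads $y_{\textup{crys},z}=p^k\cdot i_{\textup{crys},z}(x)$. The hypothesis $i_{\textup{crys},z}(x)\in\mathbf{V}_z$ therefore forces $y_{\textup{crys},z}\in p^k\mathbf{V}_z$. Since $y$ is a special endomorphism, $y\in\mathbb{V}^{o}\subset V_{\qpb}$, and because $V$ is precisely the image of the projector $\bpi$ in \eqref{pro-local}, the Dieudonn\'e realization $\mathbb{D}(y)$ already lies in the image of $\bpi_{\textup{crys},z}$, so $y_{\textup{crys},z}=\mathbb{D}(y)$. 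Moreover, $\mathbf{V}_z$ is a $\zpb$-direct summand of $\textup{End}(\mathbb{D}(X_z))(\zpb)$ (since $\bpi_{\textup{crys},z}$ is a projector defined over $\zp$), so the divisibility $y_{\textup{crys},z}\in p^k\mathbf{V}_z$ upgrades to $\mathbb{D}(y)\in p^k\,\textup{End}(\mathbb{D}(X_z))(\zpb)$. By the Dieudonn\'e equivalence over $\mathbb{F}$ this is the same as $y\in p^k\,\textup{End}(X_z)$, hence $x=y/p^k\in\textup{End}(X_z)$, i.e.\ $z\in\mathcal{Z}(x)(\mathbb{F})$. Since $x\in L$ was arbitrary, $z\in\mathcal{Z}(L)(\mathbb{F})$.

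The main subtlety is the identification $y_{\textup{crys},z}=\mathbb{D}(y)$ for a special endomorphism $y$, together with the observation that $\mathbf{V}_z$ is a direct summand of the full endomorphism module. Both facts rest on the integral nature of the projector $\bpi$ constructed in \eqref{pro-local} from an orthogonal $\zp$-basis of $V$. Once these are in place, transferring the divisibility from $\mathbf{V}_z$ to $\textup{End}(\mathbb{D}(X_z))(\zpb)$, and then via Dieudonn\'e theory to $\textup{End}(X_z)$, is purely formal.
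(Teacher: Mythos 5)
Your proof is correct and rests on the same Dieudonn\'e-theoretic input as the paper's: a quasi-homomorphism whose crystalline realization stabilizes the integral Dieudonn\'e lattice $\mathbb{D}(X_z)(\zpb)$ is, by the Dieudonn\'e equivalence over $\mathbb{F}$, an actual homomorphism. The paper applies this directly to $x_{\textup{crys},z}\in\mathbf{V}_z\subset\textup{End}_{\zpb}(\mathbb{D}(X_z)_{\zpb})$, whereas you take a detour through $y=p^kx$, a $p^k$-divisibility statement, and a direct-summand observation that is in fact not needed (the inclusion $p^k\mathbf{V}_z\subset p^k\textup{End}(\mathbb{D}(X_z))(\zpb)$ already follows from $\mathbf{V}_z\subset\textup{End}(\mathbb{D}(X_z))(\zpb)$ without any splitting); the argument is nevertheless sound and lands in the same place.
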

\begin{proof}
    It follows from the construction the map $i_{\textup{crys},z}$ that the condition $z\in\mathcal{Z}(L)(\mathbb{F})$ implies $i_{\textup{crys},z}(L)\subset\mathbf{V}_z$. Now we prove the converse direction. Suppose that $i_{\textup{crys},z}(L)\subset\mathbf{V}_z$. Let $x\in L$. We have $x_{\textup{crys},z}\in\mathbf{V}_z\subset\textup{End}_{\zpb}(\mathbb{D}(X_z)_{\zpb})$ where $\mathbb{D}(X_z)_{\zpb}$ is the Dieudonne module of the $p$-divisible group $X_z$. Let $\rho_{z}:\mathbb{X}_{V}\rightarrow X_z$ be the framing quasi-isogeny. Then $x_{\textup{crys},z}=\mathbb{D}(\rho_{z}\circ x\circ\rho_z^{-1})$ by construction. By Dieudonne theory, the quasi-isogeny $\rho_{z}\circ x\circ\rho_z^{-1}$ is indeed an isogeny because the Dieudonne module $\mathbb{D}(X_z)$ is stable under the map $\mathbb{D}(\rho_{z}\circ x\circ\rho_z^{-1})=x_{\textup{crys},z}$. Hence $z\in\mathcal{Z}(x)(\mathbb{F})$ for all $x\in L$. Therefore $z\in\mathcal{Z}(L)(\mathbb{F})$.
\end{proof}
\begin{remark}
    The analogous statement in the unitary case is not true. Let $z\in\mathcal{N}^{u}(\mathbb{F})$ be a point. Let $L\subset\mathbb{V}^{u}$ be a subset. Then $z\in\mathcal{Z}(L)(\mathbb{F})$ implies that $i_{\textup{crys},z}(L)\subset\mathbf{D}_z$ by the construction of the map $i_{\textup{crys},z}$. However, the reverse direction is not true. We refer the reader to Example \ref{difference1} (b). In that case, let $x^{\prime}=\varpi^{-1}x$. We have $x^{\prime}_{\textup{crys},z}\in\mathbf{D}_z$ but $z\notin\mathcal{Z}(x^{\prime})(\mathbb{F})$.
    \par
    In the work of Li and Zhu \cite{LZ17}, they defined a non-degenerate bilinear form $(\cdot,\cdot)_{\mathbf{D}_z}$ on the $\qpb$-vector space $\mathbf{D}_z\otimes_{\zpb}\qpb$ ((2.2.0.3) in loc. cit., see also \cite[$\S$2.1]{KR11}) in the following way:
    \begin{equation*}
        (x,y)_{\mathbf{D}_z}\coloneqq (\varpi\delta)^{-1}\langle x,Fy\rangle\,\,\,\,\textup{for $x,y\in\mathbf{D}_z\otimes_{\zpb}\qpb$},
    \end{equation*}
    where $\delta$ is a fixed element in $\mathcal{O}_{E}^{\times}$ such that $\sigma(\delta)=-\delta$, and $F$ is the Frobenius morphism on the Dieudonne module $\mathbb{D}(X_z)=\mathbf{D}_z\oplus\overline{\mathbf{D}}_z$. Let $\mathbf{D}_z^{\vee}$ be the dual lattice of $\mathbf{D}_z$ under the bilinear form $(\cdot,\cdot)_{\mathbf{D}_z}$. Notice that $\mathbf{D}_z\nsubseteq\mathbf{D}_z^{\vee}$ because there exists an element $x\in\mathbf{D}_z$ such that $(x,x)_{\mathbf{D}_z}\notin\Breve{\mathbb{Z}}_p$. In \cite[Corollary 3.1.4]{LZ17}, they proved that $z\in\mathcal{Z}(L)(\mathbb{F})$ if and only if $i_{\textup{crys},z}(L)\subset\mathbf{D}_z^{\vee}$.
\end{remark}

\subsubsection{Strong divisibility}
\label{std}
Let $z\in\mathcal{N}^{o}(\mathbb{F})$ be a point. By $\S\ref{crystal-over-ortho}$, the $\zpb$-module $\mathbf{V}_z$ is a free $\zpb$-module of rank $m$, with a nondegenerate $\zpb$-bilinear pairing $(\cdot,\cdot)$. The $\qpb$-vector space $\mathbf{V}_{z,\qpb}\coloneqq\mathbf{V}_z\otimes_{\zpb}\qpb$ is equipped with a Frobenius-linear action $\Phi_z:\mathbf{V}_{z,\qpb}\rightarrow\mathbf{V}_{z,\qpb}$ in the following way: let $X_{z}$ be the base change of $X^{\textup{univ}}$ to $z$, then the rational Dieudonne module $\mathbb{D}(X_z)_{\qpb}\coloneqq\mathbb{D}(X_z)(\zpb)\otimes_{\zpb}\qpb$ has a natural Frobenius-linear isomorphism $\mathbf{F}_z:\mathbb{D}(X_z)_{\qpb}\rightarrow\mathbb{D}(X_z)_{\qpb}$ induced by the relative Frobenius morphism on $X_z$. The morphism $\mathbf{F}_z$ induces a Frobenius-linear isomorphism $\mathbf{F}_z^{\otimes}$ on the tensor algebra $\mathbb{D}(X_z)_{\qpb}^{\otimes}$. In particular, let $\mathbf{F}_z^{(2,2)}$ be its restriction on $\mathbb{D}(X_z)_{\qpb}^{\otimes(2,2)}\simeq\textup{End}_{\qpb}\left(\textup{End}_{\qpb}(\mathbb{D}(X_z)_{\qpb})\right)$. We have
\begin{equation*}
    \mathbf{F}_z^{(2,2)}:\textup{End}_{\qpb}(\mathbb{D}(X_z)_{\qpb})\rightarrow\textup{End}_{\qpb}(\mathbb{D}(X_z)_{\qpb}),\,\,f\mapsto \mathbf{F}_z\circ f\circ\mathbf{F}_z^{-1}.
\end{equation*}
By the definition of $\mathcal{N}^{o}$ in $\S$\ref{gspin-rz}, the crystalline Tate tensor $\bpi_{\textup{crys},z}$ is Frobenius invariant. Hence $\mathbf{F}_z^{(2,2)}$ acts on $\mathbf{V}_{z,\qpb}=\textup{im}\,\bpi_{\textup{crys},z}(\textup{End}_{\qpb}(\mathbb{D}(X_z)_{\qpb}))$ as a Frobenius-linear isomorphism. We use $\Phi_z$ to denote this action.
\par
Let $\Tilde{z}\in\mathcal{N}^{o}(\zpb)$ be a lift of $z$ to $\zpb$. By discussion in $\S$\ref{crystal-over-ortho}, the point $\Tilde{z}$ induces an isotropic line $L^{1}=\varprojlim\limits_{n}\textup{Fil}^{1}\mathbf{V}_{\textup{crys},z}(\zpb/p^{n})\subset\mathbf{V}_{z}$. Let $L^{0}\coloneqq (L^{1})^{\bot}\subset\mathbf{V}_z$, we have a filtration $0\subset L^{1}\subset L^{0}\subset \mathbf{V}_z$, the following lemma proves that this filtration is strongly divisible in the sense of \cite[$\S$3.1]{Laf80} (see also \cite[$\S$4.8]{Mad16}).
\begin{lemma}
    The Frobenius-linear map $\Phi_z$ on $\mathbf{V}_{z,\qpb}$ induces the following isomorphism of $\zpb$-lattices,
    \begin{equation*}
        \Phi_z: p^{-1}L^{1}+L^{0}+p\mathbf{V}_z\stackrel{\sim}\longrightarrow\mathbf{V}_z
    \end{equation*}
    \label{strongdiv}
\end{lemma}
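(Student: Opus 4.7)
The plan is to transfer the classical strong divisibility for the Dieudonne module $\mathbb{D}(\tilde X_z)(\zpb)$ of the $p$-divisible group lift corresponding to $\tilde z$ across the $G$-equivariant embedding $\mathbf{V}_z \subset \textup{End}(\mathbb{D}(X_z))$ furnished by the projector $\bpi_{\textup{crys},z}$ of $\S\ref{crystal-over-ortho}$.

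First I would make the identifications explicit. The point $\tilde z \in \mathcal{N}^{o}(\zpb)$ corresponds to a $p$-divisible group $\tilde X_z$ over $\zpb$ with Frobenius-invariant crystalline Tate tensors. Write $M = \mathbb{D}(\tilde X_z)(\zpb)$, with its $\sigma$-linear Frobenius $\mathbf{F}$ and Hodge filtration $\textup{Fil}^{1}M \subset M$. Endow $\textup{End}(M)$ with the three-step Hodge filtration
\begin{align*}
    \textup{Fil}^{1}\textup{End}(M) &= \{f \in \textup{End}(M) : f(M) \subset \textup{Fil}^{1}M \text{ and } f(\textup{Fil}^{1}M) = 0\}, \\
    \textup{Fil}^{0}\textup{End}(M) &= \{f \in \textup{End}(M) : f(\textup{Fil}^{1}M) \subset \textup{Fil}^{1}M\},
\end{align*}
together with $\textup{Fil}^{-1}\textup{End}(M) = \textup{End}(M)$, and with the conjugation Frobenius $\textup{Ad}(\mathbf{F}): f \mapsto \mathbf{F} \circ f \circ \mathbf{F}^{-1}$ on $\textup{End}(M)_{\qpb}$. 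Under the embedding $\mathbf{V}_z \subset \textup{End}(M)$ one verifies $L^{1} = \mathbf{V}_z \cap \textup{Fil}^{1}\textup{End}(M)$, $L^{0} = \mathbf{V}_z \cap \textup{Fil}^{0}\textup{End}(M)$, and that $\Phi_z$ is the restriction of $\textup{Ad}(\mathbf{F})$ to $\mathbf{V}_{z,\qpb}$. All three compatibilities follow from the $G$-equivariance of $V \hookrightarrow \textup{End}(C(V))$ together with the fact that $\mu$ has weights $(-1, 0, \ldots, 0, 1)$ on $V$ and $(0, 1)$ on $C(V)$.

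Next I would invoke the classical strong divisibility for the Dieudonne module of a $p$-divisible group lifted to $\zpb$: one has $\textup{Fil}^{1}M = \mathbf{F}^{-1}(pM)$, equivalently $\mathbf{F}(p^{-1}\textup{Fil}^{1}M + M) = M$. Propagating this through the identification $\textup{End}(M) = M \otimes M^{\vee}$ (where $M^{\vee}$ carries the dual filtration of Hodge weights $\{-1, 0\}$) yields the three-term strong divisibility on the endomorphism crystal:
\begin{equation*}
    \textup{Ad}(\mathbf{F})\bigl(p \cdot \textup{End}(M) + \textup{Fil}^{0}\textup{End}(M) + p^{-1}\textup{Fil}^{1}\textup{End}(M)\bigr) = \textup{End}(M).
\end{equation*}
Since $\bpi_{\textup{crys},z}$ is a Frobenius-invariant crystalline Tate tensor of Hodge--Tate weight $0$, it induces a direct sum splitting $\textup{End}(M) = \mathbf{V}_z \oplus \ker \bpi_{\textup{crys},z}$ of filtered $F$-crystals, and the strong divisibility restricts along this splitting to produce the desired isomorphism $\Phi_z : p^{-1}L^{1} + L^{0} + p\mathbf{V}_z \stackrel{\sim}\longrightarrow \mathbf{V}_z$.

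The main obstacle is verifying that $\bpi_{\textup{crys},z}$ is strictly compatible with both $\textup{Ad}(\mathbf{F})$ and the Hodge filtration, i.e., that the kernel and image of $\bpi_{\textup{crys},z}$ are each stable under $\textup{Ad}(\mathbf{F})$ and inherit the filtration as direct summands. Frobenius-compatibility is immediate from $\bpi_{\textup{crys},z}$ being a crystalline Tate tensor. Strict filtration-compatibility is the delicate point; I would handle it by appealing to Madapusi \cite[$\S$4.8]{Mad16}, where strong divisibility of the filtered $F$-crystals attached to Hodge type local Shimura data is established in full generality, directly covering our GSpin setting (alternatively, the general Fontaine--Laffaille formalism of \cite[$\S$3.1]{Laf80} applies).
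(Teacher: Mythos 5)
Your proposal follows essentially the same route as the paper's proof: derive strong divisibility for the Dieudonne module $M = \mathbb{D}(\tilde X_z)(\zpb)$, tensor/dualize it to the three-step version on $\textup{End}(M) \simeq M \otimes M^\vee$, and cut down by the Frobenius- and filtration-compatible projector $\bpi_{\textup{crys},z}$; the paper's own argument is likewise modelled on Madapusi \cite[$\S$4.8]{Mad16}, but carries out the $\textup{End}(M)$ step explicitly via Verschiebung, Oda's identification of the mod-$p$ Hodge filtration, and the formula $\mathbf{F}_z^{(2,2)} = p^{-1}\mathbf{F}_z\otimes\mathcal{V}_z^{\vee}$, so as to avoid invoking Laffaille's general formalism. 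One correction to your formulation: the equation $\textup{Fil}^{1}M = \mathbf{F}^{-1}(pM)$ is false for a general lift, since the left side is a direct summand of rank $\dim\tilde X_z$ while $\mathbf{F}^{-1}(pM)=\mathcal{V}(M)$ has full rank; the correct statement is $\textup{Fil}^{1}M + pM = \mathbf{F}^{-1}(pM)$ (Oda plus Grothendieck--Messing), and this is what yields the form $\mathbf{F}(p^{-1}\textup{Fil}^{1}M + M)=M$ that you actually use downstream.
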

\begin{proof}
The proof is basically the same as that of \cite[$\S$4.8]{Mad16}. Here we give a self-contained argument without referring to results of \cite[$\S$4.2]{Laf80}.
    For simplicity, we use $H$ to denote the Dieudonne module $\mathbb{D}(X_z)(\zpb)\simeq\mathbb{D}(X_{\tilde{z}})(\zpb)$. Let $H_{\mathbb{F}}=H\otimes_{\zpb}\mathbb{F}$. Let $\mathcal{V}_z=p\cdot\mathbf{F}_z^{-1}$ be the Verschiebung morphism on $H$. Then the supersinglarity of $X_z$ implies that $\mathbf{F}_z^{2}(H)=pH$, hence $\mathcal{V}_{z}(H)=\mathbf{F}_z(H)$.
    \par
    Let $0\subset\textup{Fil}^{1}H_{\mathbb{F}}\subset H_{\mathbb{F}}$ be the Hodge filtration given by $X_z$. It is well-known that this filtration is equal to $0\subset\mathcal{V}_z(H)/pH\subset H_{\mathbb{F}}$ (see Oda's work \cite[Corollary 5.11]{Oda69}). By Grothendieck-Messing theory, the Hodge filtration $0\subset\textup{Fil}^{1}H\subset H $ determined by the $p$-divisible group $X_{\Tilde{z}}$ lifts the filtration $0\subset\textup{Fil}^{1}H_{\mathbb{F}}\subset H_{\mathbb{F}}$. Therefore we have $\mathbf{F}_z(H)=\mathcal{V}_z(H)=\textup{Fil}^{1}H+pH$. Combining this with $\mathbf{F}_z^{2}(H)=pH$, we get
    \begin{equation}
        \mathbf{F}_z\left(p^{-1}\textup{Fil}^{1}H+H\right)=H.
        \label{fil1}
    \end{equation}
    \par
    We still use $\mathcal{V}_z$ to denote the base change morphism $\mathcal{V}_z\otimes_{\zpb}\qpb:H_{\qpb}\rightarrow H_{\qpb}$. It's a Frobenius-conjugate linear isomorphism. Then $\mathcal{V}_z^{\vee}:H_{\qpb}^{\vee}\rightarrow H_{\qpb}^{\vee},\,\,f\mapsto f\circ \mathcal{V}_z$ is a Frobenius-linear isomorphism, we define $\mathbf{F}_z^{\vee}$ similarly. The filtration $0\subset\textup{Fil}^{1}H\subset H $ induces a filtration on $H^{\vee}: 0\rightarrow\textup{Fil}^{0}H^{\vee}\rightarrow H^{\vee}$ where $\textup{Fil}^{0}H^{\vee}\simeq(H/\textup{Fil}^{1}H)^{\vee}$. The equality (\ref{fil1}) induces
    \begin{equation}
        \mathcal{V}_z^{\vee}\left(p^{-1}\textup{Fil}^{0}H^{\vee}+H^{\vee}\right)=H^{\vee}.
        \label{fil2}
    \end{equation}
    It's easy to check that the Frobenius-linear action $\mathbf{F}_z^{(2,2)}$ on $\textup{End}_{\qpb}(H_{\qpb})\simeq H_{\qpb}\otimes_{\qpb}H^{\vee}_{\qpb}$ we defined before equals to $p^{-1}\mathbf{F}_z\otimes\mathcal{V}_z^{\vee}$. Therefore (\ref{fil1}) and (\ref{fil2}) together imply that
    \begin{equation*}
        \mathbf{F}_z^{(2,2)}\left(p^{-1}\textup{Fil}^{1}\textup{End}(H)+\textup{Fil}^{0}\textup{End}(H)+p\textup{End}(H)\right)=\textup{End}(H).
    \end{equation*}
    Applying the projector $\bpi_{\textup{crys},z}$, notice that it is Frobenius invariant and respects the Hodge filtration, we get
    \begin{equation*}
        \Phi_z\left(p^{-1}L^{1}+L^{0}+p\mathbf{V}_z\right)=\mathbf{V}_z.
    \end{equation*}
\end{proof}
\begin{lemma}
    Let $\nu_{p}: \qpb\rightarrow\mathbb{Z}\cup\{\infty\}$ be the $p$-adic valuation of $\qpb$. Let $\mathbf{V}$ be a free $\zpb$-module of finite rank equipped with a non-degenerate bilinear pairing $(\cdot,\cdot)$. Let $L^{1}\subset\mathbf{V}$ be an isotropic line, and $L^{0}=(L^{1})^{\perp}\subset\mathbf{V}$. Let $l\in\mathbf{V}$ be a generator of the isotropic line $L^{1}$, then 
    \begin{equation*}
        L^{0}+p\mathbf{V}=\{x\in\mathbf{V}:\nu_{p}((x,l))\geq1\}.
    \end{equation*}
    \label{pairingeq1}
\end{lemma}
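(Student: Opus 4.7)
The plan is to reduce to a direct decomposition of $\mathbf{V}$ in which both sides of the claimed equality become transparent. The key point is that the line $L^1$ is a rank one \emph{direct summand} of $\mathbf{V}$, because any rank one locally free (hence projective) submodule of a finite free module over the discrete valuation ring $\zpb$ is saturated; in particular its generator $l$ can be extended to a $\zpb$-basis of $\mathbf{V}$.

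Next, I would use the non-degeneracy of the pairing to produce a companion vector to $l$. Non-degeneracy means the induced map $\mathbf{V}\to\mathbf{V}^{\vee}=\textup{Hom}_{\zpb}(\mathbf{V},\zpb)$ is an isomorphism. Composing with the restriction $\mathbf{V}^{\vee}\to (L^{1})^{\vee}\simeq\zpb$ (surjective because $L^{1}$ is a direct summand), I get that the linear functional $\phi\colon\mathbf{V}\to\zpb$, $\phi(x)\coloneqq(x,l)$, is surjective. Hence there exists $x_{0}\in\mathbf{V}$ with $(x_{0},l)=1$. Since $L^{0}=(L^{1})^{\perp}=\ker\phi$ by definition, the resulting short exact sequence $0\to L^{0}\to\mathbf{V}\xrightarrow{\phi}\zpb\to0$ splits via $x_{0}$, yielding
\begin{equation*}
\mathbf{V}=L^{0}\oplus\zpb\cdot x_{0}.
\end{equation*}

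With this decomposition both sides of the asserted equality become easy. On the one hand, $L^{0}+p\mathbf{V}=L^{0}\oplus p\zpb\cdot x_{0}$. On the other hand, for any $x=y+cx_{0}$ with $y\in L^{0}$ and $c\in\zpb$, we have $(x,l)=c$, so the condition $\nu_{p}((x,l))\geq1$ is equivalent to $c\in p\zpb$, i.e.\ $x\in L^{0}+p\zpb\cdot x_{0}$. The two sides therefore agree. The forward inclusion $\subseteq$ is in fact immediate from the definitions of $L^{0}$ and $p\mathbf{V}$ together with $\zpb$-bilinearity of the pairing; the content of the lemma lies entirely in the reverse inclusion, which is handled by the splitting above.

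The only place where care is needed is the surjectivity of $\phi$, which relies on $l$ generating a saturated rank one submodule of $\mathbf{V}$; if one only knew $l\in\mathbf{V}$ is some nonzero vector, the image of $\phi$ could sit inside $p^{k}\zpb$ and the statement of the lemma would fail. Once one fixes this point using that $L^{1}$ is a line (i.e.\ a rank one free direct summand, by Definition \ref{plane-and-line} applied over the DVR $\zpb$), the rest is a short computation as outlined.
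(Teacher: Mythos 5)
Your overall strategy is the same as the paper's: both proofs reduce the reverse inclusion to the surjectivity of the functional $\phi(x)=(x,l)$, which in turn comes from the perfectness of the pairing together with the fact that $l\notin p\mathbf{V}$. You package this via an explicit splitting $\mathbf{V}=L^{0}\oplus\zpb\cdot x_{0}$, whereas the paper performs the one-line computation directly (choose $x'$ with $(l,x')=p^{-1}(l,x)$ and write $x=(x-px')+px'$); these are the same argument.

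There is, however, one false assertion in your justification. You write that ``any rank one locally free (hence projective) submodule of a finite free module over the discrete valuation ring $\zpb$ is saturated,'' and deduce from this that $L^{1}$ is a direct summand and hence $l\notin p\mathbf{V}$. This is not true: $p\zpb\subset\zpb$ is a free rank one submodule (so \emph{a fortiori} locally free and projective) that is not saturated, and more generally $\zpb\cdot(pe_{1})\subset\zpb e_{1}\oplus\zpb e_{2}$ is an isotropic free rank one submodule whose generator lies in $p\mathbf{V}$. Definition~\ref{plane-and-line} of a ``line'' only asks for a locally free rank one submodule, so it does not by itself deliver the primitivity of $l$. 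To be fair, the paper's own proof also asserts ``$l\notin p\mathbf{V}$'' without argument; the hypothesis is genuinely needed and genuinely used, and what makes it true in context is that $L^{1}$ arises as the Hodge filtration line $\varprojlim\textup{Fil}^{1}\vcrysz(\zpb/p^{n})$ of a lift $\tilde z\in\mathcal{N}^{o}(\zpb)$, and Hodge filtrations are locally direct summands by construction (Grothendieck--Messing). So your conclusion is correct in the setting where the lemma is applied, but the particular local-algebra reason you give for it does not hold, and you should instead appeal to the direct-summand property of the Hodge filtration.
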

\begin{proof}
    By the definition of $L^{0}$, we have $\nu_{p}((x,l))\geq1$ for any $x\in L^{0}+p\mathbf{V}$. Now if $x\in\mathbf{V}$ satisfies the condition that $\nu_{p}((x,l))\geq1$, there exists $x^{\prime}\in\mathbf{V}$ such that $(l,x^{\prime})=p^{-1}(l,x)$ since $l\notin p\mathbf{V}$ and $\mathbf{V}$ is a self-dual quadratic lattice. Therefore $x-p\cdot x^{\prime}\in L^{0}$, hence $x\in L^{0}+p\mathbf{V}$.
\end{proof}

\section{Formally smoothness locus of the special cycle}
\label{smooth}
\subsection{Formally smoothness over a discrete valuation ring}
\begin{lemma}
    Let $n\geq2$ be an integer. Let $R=\mathcal{O}[[t_{1},\cdot\cdot\cdot,t_{n-1}]]$ where $\mathcal{O}$ is a discrete valuation ring of characteristic $(0,p)$ with uniformizer $\pi$ and residue field $\mathbf{k}$. The ring $R$ has maximal ideal $\mathfrak{m}_{R}\coloneqq (\pi,t_{1},\cdot\cdot\cdot,t_{n-1})$. Let $f_{1},f_{2},\cdots,f_{r}$ be $r$ elements in $\mathfrak{m}_{R}$. The quotient ring $\overline{R}=R/(f_{1},\cdots,f_{r})$ is formally smooth over $\mathcal{O}$ of relative dimension $n-r-1$ if and only if its base change $\overline{R}_{\mathbf{k}}=\overline{R}\otimes_{\mathcal{O}}\mathbf{k}$ is formally smooth over $\mathbf{k}$ of relative dimension $n-r-1$.
    \label{smoothlemma}
\end{lemma}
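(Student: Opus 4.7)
The plan is to reduce the lemma to a statement about regular systems of parameters, handling the nontrivial (``if'') direction by exhibiting $\overline{R}$ as a power series ring over $\mathcal{O}$. The forward direction is immediate: formal smoothness is preserved under base change along $\mathcal{O} \to \mathbf{k}$, and the relative dimension is unchanged.

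For the reverse direction, I would first translate the formal smoothness of $\overline{R}_\mathbf{k}$ into a cotangent-space statement. Write $R_\mathbf{k} \coloneqq R/\pi R = \mathbf{k}[[t_1,\dots,t_{n-1}]]$, a complete regular local ring of dimension $n-1$, and let $\bar f_i \in R_\mathbf{k}$ be the reduction of $f_i$. The assumed formal smoothness of $\overline{R}_\mathbf{k} = R_\mathbf{k}/(\bar f_1,\dots,\bar f_r)$ over $\mathbf{k}$ of relative dimension $n-r-1$ makes it regular of dimension $n-r-1$. Since a quotient of a regular local ring by $r$ elements is regular of codimension exactly $r$ if and only if those elements extend to a regular system of parameters, the classes of $\bar f_1,\dots,\bar f_r$ are linearly independent in $\mathfrak{m}_{R_\mathbf{k}}/\mathfrak{m}_{R_\mathbf{k}}^2$. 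The surjection $\mathfrak{m}_R/\mathfrak{m}_R^2 \twoheadrightarrow \mathfrak{m}_{R_\mathbf{k}}/\mathfrak{m}_{R_\mathbf{k}}^2$ has one-dimensional kernel spanned by the class of $\pi$, so the classes of $\pi, f_1,\dots,f_r$ are linearly independent in $\mathfrak{m}_R/\mathfrak{m}_R^2$, and can be extended to a regular system of parameters $\pi, f_1, \dots, f_r, g_1, \dots, g_{n-r-1}$ of $R$.

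I would then argue that the natural $\mathcal{O}$-algebra map
\[
\Phi\colon \mathcal{O}[[X_1,\dots,X_r,Y_1,\dots,Y_{n-r-1}]] \longrightarrow R, \quad X_i \mapsto f_i,\ Y_j \mapsto g_j,
\]
is an isomorphism. Modulo $\pi$, it becomes a local map between complete regular local rings of the same Krull dimension $n-1$; the images of the $X_i, Y_j$ generate the maximal ideal of $R_\mathbf{k}$ modulo its square, so complete Nakayama yields surjectivity, and a dimension comparison (together with the fact that $R_\mathbf{k}$ is a domain, so the kernel, a height-zero prime, must vanish) upgrades this to bijectivity. Both source and target of $\Phi$ are flat over $\mathcal{O}$ and $\pi$-adically complete; hence an inductive five-lemma argument on the $\pi$-adic filtration promotes the mod-$\pi$ isomorphism to $\Phi \bmod \pi^k$ being an isomorphism for every $k\geq 1$, and passing to the inverse limit shows $\Phi$ itself is an isomorphism. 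Quotienting by $(X_1,\dots,X_r)$ yields $\overline{R} \cong \mathcal{O}[[Y_1,\dots,Y_{n-r-1}]]$, which is manifestly formally smooth over $\mathcal{O}$ of relative dimension $n-r-1$.

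The main obstacle I expect is verifying that $\Phi$ is an isomorphism of $\mathcal{O}$-algebras rather than merely of $\mathbf{k}$-algebras after base change; but the combination of $\pi$-adic completeness and $\mathcal{O}$-flatness of both sides is precisely what reduces this to the mod-$\pi$ statement handled above.
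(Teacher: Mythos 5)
Your proof is correct, and at bottom it rests on the same observation as the paper's, but it reaches it by a more self-contained route. The paper's argument is a short application of the Jacobian criterion: formal smoothness of $\overline{R}$ over $\mathcal{O}$ of relative dimension $n-r-1$ is identified with the $r\times(n-1)$ matrix $J = (\partial f_i/\partial t_j)(\mathbf{0})$ having an invertible $r\times r$ minor over $\mathcal{O}$; since $\mathcal{O}$ is local, this is detectable after reducing $J$ modulo $\pi$; and the $\mathbf{k}$-side Jacobian criterion matches $\mathrm{rank}\,J_{\mathbf{k}}=r$ with formal smoothness of $\overline{R}_{\mathbf{k}}$. You instead reprove the nontrivial implication (cotangent condition $\Rightarrow$ formal smoothness) from scratch: you translate formal smoothness of $\overline{R}_{\mathbf{k}}$ into regularity, deduce that $\bar f_1,\dots,\bar f_r$ extend to a regular system of parameters of $R_{\mathbf{k}}$, lift $\pi,f_1,\dots,f_r$ to a regular system of parameters of $R$, and then build the map $\Phi$ from a power series ring and show it is an isomorphism by combining the mod-$\pi$ isomorphism with $\mathcal{O}$-flatness and $\pi$-adic completeness, arriving at the explicit presentation $\overline{R}\cong\mathcal{O}[[Y_1,\dots,Y_{n-r-1}]]$. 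The lifting step is the same in both proofs (linear independence of $\bar f_1,\dots,\bar f_r$ in $\mathfrak{m}_{R_{\mathbf{k}}}/\mathfrak{m}_{R_{\mathbf{k}}}^2$ is equivalent to linear independence of $\pi,f_1,\dots,f_r$ in $\mathfrak{m}_R/\mathfrak{m}_R^2$, which is the paper's ``a minor is a unit in $\mathcal{O}$ iff it is nonzero in $\mathbf{k}$''). What your version buys is independence from the Jacobian criterion over a DVR as a cited fact, at the price of being considerably longer; the paper's version is terser but black-boxes that equivalence.
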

\begin{proof}
    Let $\mathbf{0}: R\rightarrow\mathcal{O}$ be the continuous homomorphism which sends all the $t_{j}$ to $0$. Let $J=(\frac{\partial f_{i}}{\partial t_{j}}(\mathbf{0}))_{\substack{1\leq i\leq r\\1\leq j\leq n-1}}$ be the Jacobian matrix of $f_{1},f_{2},\cdots,f_{r}$ at $\mathbf{0}$. The quotient ring $\overline{R}$ is formally smooth over $\mathcal{O}$ of relative dimension $n-r-1$ if and only if the matrix $J$ has a $r\times r$-minor which is invertible in $\mathcal{O}$, because in this case we can rearrange a system of parameters $t_{1}^{\prime},\cdots,t_{n-1}^{\prime}$ of $R$ such that $t_{i}^{\prime}=f_{i}$ for $1\leq i\leq r$.
    \par
    Let $J_{\mathbf{k}}=J\otimes_{\mathcal{O}}\mathbf{k}$ be the base change of $J$ to $\mathbf{k}$, the matrix $J$ has a $r\times r$-minor which is invertible in $\mathcal{O}$ is equivalent to the rank of $J_{\mathbf{k}}$ is $r$, which is further equivalent to $\overline{R}_{\mathbf{k}}$ is formally smooth over $\mathbf{k}$ of relative dimension $n-r-1$, hence the quotient ring $\overline{R}=R/(f_{1},\cdots,f_{r})$ is formally smooth over $\mathcal{O}$ of relative dimension $n-r-1$ if and only if $\overline{R}_{\mathbf{k}}$ is formally smooth over $\mathbf{k}$ of relative dimension $n-r-1$.
\end{proof}
\subsection{The unitary case}
For a point $z\in\mathcal{N}^{u}(\mathbb{F})$, let $\widehat{\mathcal{N}}^{u}_{z}$ be the completion of the formal scheme $\mathcal{N}^{u}$ at $z$. Recall that the Dieudonne module $\mathbb{D}(X_z)$ of $X_z$ has a decomposition $\mathbb{D}(X_z)=\mathbf{D}_z\oplus\overline{\mathbf{D}}_z$ as in $\S$\ref{def-unitary}. Both $\mathbf{D}_z$ and $\overline{\mathbf{D}}_z$ are free $\ofb$-modules of rank $n$. Let $L\subset\mathbb{V}^{u}$ be an $\mathcal{O}_{E}$-lattice. For a point $z\in\mathcal{Z}(L)(\mathbb{F})$, we defined in (\ref{crystal-unitary}) an $\mathcal{O}_F$-linear map $i_{\textup{crys},z}:L\rightarrow\mathbf{D}_z$. We say the map $i_{\textup{crys},z}$ is primitive at $z$ if the induced map $\overline{i_{\textup{crys},z}}: L/\varpi L\rightarrow \mathbf{D}_{z}/\varpi \mathbf{D}_{z}$ is injective.
\begin{lemma}
    Let $L\subset\mathbb{V}^{u}$ be an $\mathcal{O}_{E}$-lattice of rank $1\leq r\leq n-1$ and $z\in\mathcal{Z}(L)(\mathbb{F})$, then $\mathcal{Z}(L)$ is formally smooth over $\ofb$ of relative dimension $n-r-1$ at $z$ if and only if the map $i_{\textup{crys},z}$ is primitive at $z$.
    \label{smoothcycleu}
\end{lemma}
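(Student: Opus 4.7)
The plan is to reduce the smoothness criterion, via Lemma \ref{smoothlemma}, to a linear-algebra statement about the tangent space of the special fiber $\mathcal{Z}(L)_{\mathbb{F}}$, and then unwind that tangent space using the deformation-theoretic bijection of Lemma \ref{deformu}(ii). First I would observe that, since the $\mathcal{O}_{E}$-action on $\overline{\mathbb{Y}}$ lifts canonically to endomorphisms of $\overline{Y}$, we have $\mathcal{Z}(\mathcal{O}_{E}\cdot x)=\mathcal{Z}(x)$, so the special cycle $\mathcal{Z}(L)$ is the scheme-theoretic intersection $\bigcap_{i=1}^{r}\mathcal{Z}(x_{i})$ for any $\mathcal{O}_{E}$-basis $\{x_{1},\ldots,x_{r}\}$ of $L$. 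Choosing an isomorphism $\widehat{\mathcal{N}}^{u}_{z}\simeq\ofb[[t_{1},\ldots,t_{n-1}]]$ (Theorem \ref{smoothu}) and writing $f_{i}$ for a local equation of $\mathcal{Z}(x_{i})$, Lemma \ref{smoothlemma} tells us that $\mathcal{Z}(L)$ is formally smooth over $\ofb$ at $z$ of relative dimension $n-r-1$ if and only if the Jacobian $\bigl(\partial\overline{f_{i}}/\partial t_{j}\bigr)$ of the reductions $\overline{f_{i}}=f_{i}\bmod\varpi$ has rank $r$ over $\mathbb{F}$, equivalently if and only if the $r$ linear functionals cut out by $\nabla\overline{f_{i}}$ on $T_{z}\mathcal{N}^{u}_{\mathbb{F}}$ are linearly independent.

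Next I would apply Lemma \ref{deformu} to the square-zero surjection $\mathbb{F}[\varepsilon]\twoheadrightarrow\mathbb{F}$ (its kernel admits trivial $\mathcal{O}_{F}$-pd-structure). Part (i) identifies $T_{z}\mathcal{N}^{u}_{\mathbb{F}}$ with lifts of the line $\textup{Fil}^{1}\overline{\mathbf{D}}_{z}(\mathbb{F})$ to a line in $\overline{\mathbf{D}}_{z}(\mathbb{F}[\varepsilon])=\overline{\mathbf{D}}_{z}(\mathbb{F})\otimes_{\mathbb{F}}\mathbb{F}[\varepsilon]$, which is a torsor under $T\coloneqq\overline{\mathbf{D}}_{z}(\mathbb{F})/\textup{Fil}^{1}\overline{\mathbf{D}}_{z}(\mathbb{F})$: fixing a generator $\overline{v}$ of $\textup{Fil}^{1}\overline{\mathbf{D}}_{z}(\mathbb{F})$, a vector $w\in T$ corresponds to the line generated by $\overline{v}+\varepsilon w$. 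Part (ii) then cuts out $T_{z}\mathcal{Z}(L)_{\mathbb{F}}$ by the orthogonality conditions $\langle x_{\textup{crys},z}(\mathbb{F}[\varepsilon]),\overline{v}+\varepsilon w\rangle=0$ for $x\in L$. Since $x_{\textup{crys},z}(\mathbb{F}[\varepsilon])$ is the trivial base change of $x_{\textup{crys},z}(\mathbb{F})$ and the leading term $\langle x_{\textup{crys},z}(\mathbb{F}),\overline{v}\rangle$ already vanishes (as $z\in\mathcal{Z}(x)(\mathbb{F})$ forces $x_{\textup{crys},z}(\mathbb{F})\in\textup{Fil}^{1}\mathbf{D}_{z}(\mathbb{F})$), the condition reduces to the $\mathbb{F}$-linear equation $\langle x_{\textup{crys},z}(\mathbb{F}),w\rangle=0$.

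Finally, I would translate these conditions via the perfect pairing $\mathbf{D}_{z}(\mathbb{F})\times\overline{\mathbf{D}}_{z}(\mathbb{F})\to\mathbb{F}$, which identifies $T^{\vee}$ with $\textup{Fil}^{1}\mathbf{D}_{z}(\mathbb{F})$ (the annihilator of $\textup{Fil}^{1}\overline{\mathbf{D}}_{z}(\mathbb{F})$). Under this identification, the functional on $T$ determined by $x_{i}$ corresponds exactly to the element $x_{i,\textup{crys},z}(\mathbb{F})\in\textup{Fil}^{1}\mathbf{D}_{z}(\mathbb{F})\subset\mathbf{D}_{z}/\varpi\mathbf{D}_{z}$. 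Therefore the Jacobian has rank $r$ if and only if the images of an $\mathcal{O}_{E}$-basis of $L$ are independent in $\mathbf{D}_{z}/\varpi\mathbf{D}_{z}$, which is precisely the primitivity of $\overline{i_{\textup{crys},z}}$; combining this with the reduction of the first paragraph gives the desired equivalence.

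The main obstacle is ensuring that the orthogonality imposed in Lemma \ref{deformu}(ii) really collapses to a single linear condition per basis element. The key observation making this work is that $x_{\textup{crys},z}$ is already orthogonal to $\textup{Fil}^{1}\overline{\mathbf{D}}_{z}(\mathbb{F})$ at the $\mathbb{F}$-level (by virtue of $z\in\mathcal{Z}(L)(\mathbb{F})$), so the $\varepsilon$-term captures exactly the first-order variation. Once this is in place, the duality between lines in $\overline{\mathbf{D}}_{z}$ and hyperplanes in $\mathbf{D}_{z}$ (Lemma \ref{corre}) converts the smoothness question into the primitivity criterion in a clean linear-algebraic fashion.
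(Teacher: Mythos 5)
Your proof is correct and follows essentially the same route as the paper: reduce to the special fiber via Lemma \ref{smoothlemma}, identify the tangent space with $\mathbb{F}[\epsilon]$-points and apply Lemma \ref{deformu}, then convert the orthogonality conditions into the rank/primitivity criterion through the perfect pairing between $\mathbf{D}_z(\mathbb{F})$ and $\overline{\mathbf{D}}_z(\mathbb{F})$.
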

\begin{proof}
    The special cycle $\mathcal{Z}(L)$ is cut out by $r$ equations in the deformation space $\widehat{\mathcal{N}}^{u}_{z}\simeq\textup{Spf}\,\mathcal{O}_{\mathcal{N}^{u},z}$ by Proposition \ref{divisor}. Therefore the special cycle $\mathcal{Z}(L)$ is formally smooth over $\ofb$ of relative dimension $n-r-1$ at $z$ if and only if $\mathcal{Z}(L)_{\mathbb{F}}\coloneqq\mathcal{Z}(L)\times_{\ofb}\mathbb{F}$ is formally smooth over $\mathbb{F}$ of relative dimension $n-r-1$ at $z$ by Lemma \ref{smoothlemma}. Let $\mathcal{N}^{u}_{\mathbb{F}}=\mathcal{N}^{u}\times_{\ofb}\mathbb{F}$. Let $l\in\overline{\mathbf{D}}_z$ be an element such that its image $\overline{l}$ in $\overline{\mathbf{D}}_{z}/\varpi\overline{\mathbf{D}}_{z}\simeq\overline{\mathbf{D}}_{z}(\mathbb{F})$ is a generator of the line $\textup{Fil}^{1}\mathbb{D}(X_z)(\mathbb{F})\cap\overline{\mathbf{D}}_{z}(\mathbb{F})$. Let $\mathbb{F}[\epsilon]=\mathbb{F}[X]/X^{2}$. 
    \par
    By Schlessinger's criterion in \cite[Theorem 2.11]{Sch68}, the tangent space $\textup{Tgt}_{z}(\mathcal{N}^{u}_{\mathbb{F}})$ of $\mathcal{N}^{u}_{\mathbb{F}}$ at $z$ can be identified with the set $\widehat{\mathcal{N}}^{u}_{z}(\mathbb{F}[\epsilon])$. This set has a natural bijection to the set of lines in $\overline{\mathbf{D}}_z(\mathbb{F})\otimes_{\mathbb{F}}\mathbb{F}[\epsilon]$ which lifts the line $\mathbb{F}\cdot\overline{l}$ in $\overline{\mathbf{D}}_z(\mathbb{F})$ by Lemma \ref{deformu}. Therefore any such line has a generator of the form $\overline{l}+\epsilon\cdot w$ for some element $w\in\overline{\mathbf{D}}_z(\mathbb{F})$. Two elements $\overline{l}+\epsilon\cdot w$ and $\overline{l}+\epsilon\cdot w^{\prime}$ generate the same line if and only if $w-w^{\prime}\in\mathbb{F}\cdot\overline{l}$, hence $\textup{Tgt}_{z}(\mathcal{N}^{u}_{\mathbb{F}})\simeq\overline{\mathbf{D}}_z(\mathbb{F})/\mathbb{F}\cdot\overline{l}$, and $\textup{dim}_{\mathbb{F}}\textup{Tgt}_{z}(\mathcal{N}^{u}_{\mathbb{F}})=n-1$.
    \par
    The tangent space $\textup{Tgt}_{z}(\mathcal{Z}(L)_{\mathbb{F}})$ of $\mathcal{Z}(L)_{\mathbb{F}}$ at $z$ can be identified with the set $\widehat{\mathcal{Z}(L)}_{z}(\mathbb{F}[\epsilon])$. For all $x\in L$, let $\overline{x_{\textup{crys},z}}$ be the image of $x_{\textup{crys},z}$ in $\mathbf{D}_z(\mathbb{F})\simeq\mathbf{D}_z/\varpi\mathbf{D}_z$. The set $\widehat{\mathcal{Z}(L)}_{z}(\mathbb{F}[\epsilon])$ has a natural bijection to the set of lines in $\overline{\mathbf{D}}_z(\mathbb{F})\otimes_{W}\mathbb{F}[\epsilon]$ which lifts the line $\mathbb{F}\cdot\overline{l}$ in $\overline{\mathbf{D}}_z(\mathbb{F})$ and orthogonal to $\overline{x_{\textup{crys},z}}$ for all $x\in L$. Let $\overline{l}+\epsilon\cdot w$ be a generator of a line in the set $\widehat{\mathcal{Z}(L)}_{z}(\mathbb{F}[\epsilon])$, then $\langle\overline{x_{\textup{crys},z}},\overline{l}+\epsilon\cdot w\rangle=\epsilon\langle\overline{x_{\textup{crys},z}},w\rangle=0$, hence $\langle\overline{x_{\textup{crys},z}},w\rangle=0$. Therefore $\textup{Tgt}_{z}(\mathcal{Z}(L)_{\mathbb{F}})$ is the subspace of $\overline{\mathbf{D}}_{\mathbb{F}}/\mathbb{F}\cdot\overline{l}$ orthogonal to the image of $L$ in $\mathbf{D}_z(\mathbb{F})$ under the perfect pairing $\langle\cdot,\cdot\rangle:\mathbf{D}_z(\mathbb{F})\times\overline{\mathbf{D}}_z(\mathbb{F})\rightarrow\mathbb{F}$. Let $\overline{L}\subset\mathbf{D}_z/\varpi\mathbf{D}_z$ be the $\mathbb{F}$-linear subspace spanned by the image of $L$ in $\mathbf{D}_{z}(\mathbb{F})$, we have
    \begin{equation}
        \textup{dim}_{\mathbb{F}}\textup{Tgt}_{z}(\mathcal{Z}(L)_{\mathbb{F}})= n-1-\textup{dim}_{\mathbb{F}}(\overline{L}).
        \label{dimtangent}
    \end{equation}
    therefore $\mathcal{Z}(L)$ is formally smooth over $\ofb$ of relative dimension $n-r-1$ at $z$ if and only if $\textup{dim}_{\mathbb{F}}(\overline{L})=r$, which is equivalent to the fact that the map $\overline{i_{\textup{crys},z}}: L/\varpi L\rightarrow \mathbf{D}_z/\varpi\mathbf{D}_z$ is injective, i.e., the map $i_{\textup{crys},z}$ is primitive.
\end{proof}

\subsection{The GSpin case}
For a point $z\in\mathcal{N}^{o}(\mathbb{F})$, let $\widehat{\mathcal{N}}^{o}_{z}$ be the completion of the formal scheme $\mathcal{N}^{o}$ at $z$. Let $\mathbf{V}_z=\mathbf{V}_{\textup{crys},z}$ be the free $\zpb$-module of rank $m$ defined in $\S$\ref{crystal-over-ortho}. Let $L\subset\mathbb{V}^{o}$ be an $\mathbb{Z}_{p}$-lattice. For a point $z\in\mathcal{Z}(L)(\mathbb{F})$, we defined in (\ref{crystalline-gspin}) a $\zp$-linear map $i_{\textup{crys},z}:L\rightarrow\mathbf{V}_z$. We say the map $i_{\textup{crys},z}$ is primitive at $z$ if the induced map $\overline{i_{\textup{crys},z}}: L/pL\rightarrow \mathbf{V}_z/p\mathbf{V}_z$ is injective.
\begin{lemma}
    Let $L\subset\mathbb{V}^{o}$ be a $\zp$-lattice of rank $r\geq1$ and $z\in\mathcal{Z}(L)(\mathbb{F})$, then $\mathcal{Z}(L)$ is formally smooth over $\zpb$ of relative dimension $n-r-1$ at $z$ if and only if the following two assertions hold,
    \begin{itemize}
        \item [(i)]The map $i_{\textup{crys},z}$ is primitive at $z$;
        \item[(ii)] There exists a lift of $z$ to $z^{\prime}\in\mathcal{Z}(L)(\zpb/p^{2})$.
    \end{itemize}
    \label{smoothcycleo}
\end{lemma}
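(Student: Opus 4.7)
The plan is to adapt the argument of Lemma~\ref{smoothcycleu} to the orthogonal setting, substituting Lemma~\ref{deformo} for Lemma~\ref{deformu}. The key new feature is that the Hodge line generator $\overline{l}$ and the crystalline realizations $\overline{x_{\textup{crys},z}}$ all live in the same crystal $\vcrysz(\mathbb{F})=\mathbf{V}_z(\mathbb{F})$ equipped with a non-degenerate pairing, forcing a second-order (quadratic) obstruction to lifting that is encoded by condition (ii).

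First, by Lemma~\ref{smoothlemma}, formal smoothness of $\mathcal{Z}(L)$ over $\zpb$ of relative dimension $n-r-1$ at $z$ is equivalent to $\dim_\mathbb{F}\textup{Tgt}_z(\mathcal{Z}(L)_\mathbb{F})=n-r-1$. I would compute the tangent space by applying Lemma~\ref{deformo}(ii) with $R=\mathbb{F}[\epsilon]/(\epsilon^2)$ and $S=\mathbb{F}$: tangent vectors correspond to isotropic lines in $\vcrysz(\mathbb{F}[\epsilon])$ lifting $\mathbb{F}\overline{l}$ and orthogonal to $\overline{x_{\textup{crys},z}}$ for all $x\in L$. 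Any such line has a generator $\overline{l}+\epsilon w$ with $w\in \mathbf{V}_z(\mathbb{F})$ well-defined modulo $\mathbb{F}\overline{l}$; isotropy reduces to $(w,\overline{l})=0$ and orthogonality to $(w,\overline{x_{\textup{crys},z}})=0$. Combining with the induced non-degenerate pairing on $\overline{l}^{\perp}/\mathbb{F}\overline{l}$ (of dimension $n-1$) yields
\[
\dim_\mathbb{F}\textup{Tgt}_z(\mathcal{Z}(L)_\mathbb{F}) \;=\; (n-1) - \dim_\mathbb{F}\bigl(\overline{L}/(\overline{L}\cap \mathbb{F}\overline{l})\bigr),
\]
where $\overline{L}\subset\mathbf{V}_z(\mathbb{F})$ is the $\mathbb{F}$-span of $\{\overline{x_{\textup{crys},z}}\}_{x\in L}$. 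Formal smoothness is therefore equivalent to $\dim_\mathbb{F}(\overline{L}/(\overline{L}\cap \mathbb{F}\overline{l}))=r$, which (since $\overline{L}$ is spanned by $r$ elements) decomposes as the conjunction $\dim_\mathbb{F}(\overline{L})=r$ and $\overline{l}\notin\overline{L}$.

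The next step is to match these two linear-algebraic conditions with (i) and (ii). The equality $\dim_\mathbb{F}(\overline{L})=r$ is equivalent to (i): although (i) only gives $\mathbb{F}_p$-linear injectivity of $\overline{i_{\textup{crys},z}}$, the images $\overline{x_{\textup{crys},z}}$ are fixed by the Frobenius-semilinear reduction $\overline{\Phi}$ of $\Phi|_{\mathbf{V}_z}\colon \mathbf{V}_z\to\mathbf{V}_z$ (well-defined by Lemma~\ref{strongdiv}), so a Galois-descent argument along $\mathbb{F}/\mathbb{F}_p$ upgrades $\mathbb{F}_p$-independence to $\mathbb{F}$-independence. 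The condition $\overline{l}\notin\overline{L}$ is equivalent to (ii): applying Lemma~\ref{deformo}(ii) to the surjection $\zpb/p^2\to\mathbb{F}$ (whose kernel has nilpotent divided powers since $p$ is odd), condition (ii) translates into the existence of $\overline{u}\in\mathbf{V}_z(\mathbb{F})/\mathbb{F}\overline{l}$ making the lift $\tilde{l}=l+pu$ isotropic and orthogonal to each $x_{\textup{crys},z}$ modulo $p^2$. Expanding these constraints gives the linear system $(\overline{u},\overline{l})=-\overline{a}/2$ and $(\overline{u},\overline{x_{\textup{crys},z}})=-\overline{b}_x$ where $a=(l,l)/p$ and $b_x=(l,x_{\textup{crys},z})/p$; given (i), this system is consistent if and only if the $r+1$ functionals $(\cdot,\overline{l})$, $(\cdot,\overline{x_{\textup{crys},z}})$ on $\mathbf{V}_z(\mathbb{F})/\mathbb{F}\overline{l}$ are $\mathbb{F}$-linearly independent, which is exactly $\overline{l}\notin\overline{L}$.

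I expect the principal obstacle to be this final equivalence between (ii) and $\overline{l}\notin\overline{L}$: one must carefully set up the mod-$p^2$ linear system and check that any putative relation $\overline{l}=\sum_i d_i\overline{x_{i,\textup{crys},z}}$ forces a compatibility condition between $a$ and $\sum_i d_i b_{x_i}$ that cannot be satisfied, by substituting $l=\sum_i d_i x_{i,\textup{crys},z}+pw$ and invoking $\Phi(l)\in p\mathbf{V}_z$ from the strong divisibility property of Lemma~\ref{strongdiv}. The tangent space calculation itself and the $\mathbb{F}_p$-to-$\mathbb{F}$ upgrade are routine adaptations of the unitary-case argument.
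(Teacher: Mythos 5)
Your overall plan follows the paper's: reduce to the special fiber via Lemma~\ref{smoothlemma}, compute $\textup{Tgt}_z(\mathcal{Z}(L)_\mathbb{F})$ with Lemma~\ref{deformo} on $\mathbb{F}[\epsilon]\to\mathbb{F}$ to get $\dim_\mathbb{F}\textup{Tgt}_z(\mathcal{Z}(L)_\mathbb{F})=n-1-\dim_\mathbb{F}((\overline{L}+\mathbb{F}\overline{l})/\mathbb{F}\overline{l})$, and then decompose the smoothness criterion as $\dim_\mathbb{F}\overline{L}=r$ together with $\overline{l}\notin\overline{L}$. All of this matches the paper.

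However, there is a concrete error in your justification of the first equivalence. You assert that $\Phi_z$ restricts to $\mathbf{V}_z$ and so reduces to a Frobenius-semilinear map $\overline{\Phi}$ on $\mathbf{V}_z/p\mathbf{V}_z$, ``well-defined by Lemma~\ref{strongdiv}.'' This is exactly what Lemma~\ref{strongdiv} rules out: the strong divisibility says $\Phi_z(p^{-1}L^1+L^0+p\mathbf{V}_z)=\mathbf{V}_z$, i.e.\ $\Phi_z^{-1}(\mathbf{V}_z)=p^{-1}L^1+L^0+p\mathbf{V}_z$, a lattice which is neither contained in nor contains $\mathbf{V}_z$. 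In particular $\Phi_z(\mathbf{V}_z)\not\subset\mathbf{V}_z$, there is no induced semilinear endomorphism of $\mathbf{V}_z/p\mathbf{V}_z$, and the Galois-descent step as stated cannot even be formulated. What one does have is the weaker containment $\Phi_z(L^0+p\mathbf{V}_z)\subset\mathbf{V}_z$, and it is only in combination with the mod-$p^2$ lift from condition (ii) --- which forces the relevant vector $v$ into $L^0+p\mathbf{V}_z$ via Lemma~\ref{pairingeq1} --- that one can apply $\Phi_z$ and stay inside $\mathbf{V}_z$. So the two conditions cannot be matched to the two linear-algebra conditions independently of one another, as your proposal tries to do.

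For the second part, the paper does not set up a mod-$p^2$ linear system. Instead it argues by contradiction with a valuation count: assume $l=\sum_i a_i x_{i,\textup{crys},z}+pv$; use $\Phi_z(l)\in p\mathbf{V}_z$ and strong divisibility to show $v\notin L^0+p\mathbf{V}_z$, hence $(l,v)\in\zpb^\times$ by Lemma~\ref{pairingeq1}; use condition (ii) via Lemma~\ref{deformo} to get $\nu_p((l,x_{i,\textup{crys},z}))\geq2$; then $(l,\sum_i a_i x_{i,\textup{crys},z})=-p(l,v)$ has $p$-adic valuation exactly $1$ while the left-hand side has valuation at least $2$, a contradiction. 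This replaces your ``consistent iff the $r+1$ functionals are independent'' step, which as stated is not an equivalence --- a linear system with dependent rows can still be consistent for particular right-hand sides, so the forward implication you would need requires precisely the kind of compatibility argument (invoking strong divisibility) that you flag as the principal obstacle but do not carry out.
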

\begin{proof}
    The special cycle $\mathcal{Z}(L)$ is cut out by $r$ equations in the deformation space $\widehat{\mathcal{N}}^{o}_{z}\simeq\textup{Spf}\,\mathcal{O}_{\mathcal{N},z}$ by Proposition \ref{divisor}. Therefore the special cycle $\mathcal{Z}(L)$ is formally smooth over $\zpb$ of relative dimension $n-r-1$ at $z$ if and only if $\mathcal{Z}(L)_{\mathbb{F}}\coloneqq\mathcal{Z}(L)\times_{\zpb}\mathbb{F}$ is formally smooth over $\mathbb{F}$ of relative dimension $n-r-1$ at $z$ by Lemma \ref{smoothlemma}. Let $\mathcal{N}^{o}_{\mathbb{F}}=\mathcal{N}^{o}\times_{\zpb}\mathbb{F}$. Let $l\in\mathbf{V}_z$ be an isotropic element such that its image $\overline{l}\in\mathbf{V}_z/p\mathbf{V}_z\simeq\vcrysz(\mathbb{F})$ generates the line $\textup{Fil}^{1}\vcrysz(\mathbb{F})$. Let $(\cdot,\cdot)$ be the bilinear form on $\vcrysz(\mathbb{F})$. Let $\mathbb{F}[\epsilon]=\mathbb{F}[X]/X^{2}$.  
    \par
    By Schlessinger's criterion in \cite[Theorem 2.11]{Sch68}, the tangent space $\textup{Tgt}_{z}(\mathcal{N}^{o}_{\mathbb{F}})$ (resp. $\textup{Tgt}_{z}(\mathcal{Z}(L)_{\mathbb{F}})$) of $\mathcal{N}^{o}_{\mathbb{F}}$ (resp. $\mathcal{Z}(L)_{\mathbb{F}}$) at $z$ can be identified with the set $\widehat{\mathcal{N}}_{z}^{o}(\mathbb{F}[\epsilon])$. This set has a natural bijection to the set of isotropic lines in $\vcrys(\mathbb{F}[\epsilon])\simeq\vcrysz(\mathbb{F})\otimes_{\mathbb{F}}\mathbb{F}[\epsilon]$ which lifts the line $\mathbb{F}\cdot \overline{l}$ in $\vcrysz(\mathbb{F})$ by Lemma \ref{deformo}. Therefore any such line has a generator of the form $\overline{l}+\epsilon\cdot w$ for some $w\in\vcrysz(\mathbb{F})$, note that
    \begin{equation*}
        0=(\overline{l}+\epsilon\cdot w,\overline{l}+\epsilon\cdot w)=(\overline{l},\overline{l})+2\epsilon(\overline{l},w)=2\epsilon(\overline{l},w),
    \end{equation*}
    hence the fact that the line generated by $\overline{l}+\epsilon\cdot w$ is isotropic is equivalent to the vector $w\in\vcrysz(\mathbb{F})$ is orthogonal to $\overline{l}$. Two elements $\overline{l}+\epsilon\cdot w$ and $\overline{l}+\epsilon\cdot w^{\prime}$ generate the same line if and only if $w^{\prime}-w\in\mathbb{F}\cdot\overline{l}$, hence $ \textup{Tgt}_{z}(\mathcal{N}^{o}_{\mathbb{F}})\simeq\{\overline{l}\}^{\bot}/\mathbb{F}\cdot\overline{l}$, and $\textup{dim}_{\mathbb{F}}\textup{Tgt}_{z}(\mathcal{N}^{o}_{\mathbb{F}})=n-1$. There is a bilinear pairing on the space $\{\overline{l}\}^{\bot}/\mathbb{F}\cdot\overline{l}$ induced by $(\cdot,\cdot)$, we denote it by $\overline{(\cdot,\cdot)}$. It is also non-degenerate since because the pairing $(\cdot,\cdot)$ is non-degenerate.
\par
The tangent space $\textup{Tgt}_{z}(\mathcal{Z}(L)_{\mathbb{F}})$ of $\mathcal{Z}(L)_{\mathbb{F}}$ at $z$ can be identified with the set $\widehat{\mathcal{Z}(L)}_{z}(\mathbb{F}[\epsilon])$. For all $x\in L$, let $\overline{x_{\textup{crys},z}}$ be the image of $x_{\textup{crys},z}$ in $\vcrysz(\mathbb{F})$. The set $\widehat{\mathcal{Z}(L)}_{z}(\mathbb{F}[\epsilon])$ has a natural bijection to the set of isotropic lines in $\vcrysz(\mathbb{F})\otimes_{\mathbb{F}}\mathbb{F}[\epsilon]$ which lifts the line $\mathbb{F}\cdot\overline{l}$ in $\vcrysz(\mathbb{F})$ and orthogonal to $\overline{x_{\textup{crys},z}}$ for any $x\in L$. Let $\overline{l}+\epsilon\cdot w$ be a generator of a line in the set $\widehat{\mathcal{Z}(L)}_{z}(\mathbb{F}[\epsilon])$, then $(\overline{x_{\textup{crys},z}},\overline{l}+\epsilon\cdot w)=\epsilon(\overline{x_{\textup{crys},z}},w)=0$, hence $(\overline{x_{\textup{crys},z}},w)=0$. Therefore $\textup{Tgt}_{z}(\mathcal{Z}(L)_{\mathbb{F}})$ is the subspace of $\{\overline{l}\}^{\bot}/\mathbb{F}\cdot\overline{l}$ orthogonal to the image of $L$ in $\vcrysz(\mathbb{F})$ under the pairing $\overline{(\cdot,\cdot)}$. Let $\overline{L}\subset\mathbf{V}_z/p\mathbf{V}_z$ be the $\mathbb{F}$-linear subspace spanned by the image of $L$ in $\vcrysz(\mathbb{F})$, it is contained in the subspace $\{\overline{l}\}^{\bot}$ of $\vcrysz(\mathbb{F})$, we have
\begin{equation*}
    \textup{Tgt}_{z}(\mathcal{Z}(L)_{\mathbb{F}})=\left((\overline{L}+\mathbb{F}\cdot\overline{l})/\mathbb{F}\cdot\overline{l}\right)^{\bot}\subset\{\overline{l}\}^{\bot}/\mathbb{F}\cdot\overline{l}.
\end{equation*}
Therefore $\textup{dim}_{\mathbb{F}}\,\textup{Tgt}_{z}(\mathcal{Z}(L)_{\mathbb{F}})=n-1-\textup{dim}_{\mathbb{F}}\,\left((\overline{L}+\mathbb{F}\cdot\overline{l})/\mathbb{F}\cdot\overline{l}\right)$ since the pairing $\overline{(\cdot,\cdot)_{\mathbb{F}}}$ is non-degenerate on $\{\overline{l}\}^{\bot}/\mathbb{F}\cdot\overline{l}$. Therefore $\mathcal{Z}(L)$ is formally smooth over $\zpb$ of relative dimension $n-r-1$ at $z$ if and only if $\textup{dim}_{\mathbb{F}}\,\left((\overline{L}+\mathbb{F}\cdot\overline{l})/\mathbb{F}\cdot\overline{l}\right)=r$, which is further equivalent to $\textup{dim}_{\mathbb{F}}\,\overline{L}=r$ and $\overline{l}\notin\overline{L}$. The condition $\textup{dim}_{\mathbb{F}}\,\overline{L}=r$ is equivalent to (i) which says that the map $i_{\textup{crys},z}$ is primitive at $z$.
\par
For one direction, if the formal scheme $\mathcal{Z}(L)$ is formally smooth over $\zpb$ of relative dimension $n-r-1$ at $z$, clearly $i_{\textup{crys},z}$ is primitive by the above discussion. The formally smoothness of $\mathcal{Z}(L)$ over $\zpb$ also implies that an $\mathbb{F}$-point $z\in\mathcal{Z}(\mathbb{F})$ can be lifted to a point $z^{\prime}\in\mathcal{Z}(L)(\zpb/p^{2})$. Hence (i) and (ii) are true.
\par
    Let's consider the other direction. Assume that (i) and (ii) are true, we want to show that the element $\overline{l}$ doesn't belong to the space $\overline{L}$. Let $z^{\prime}\in\mathcal{Z}(L)(\zpb/p^{2})$ be a lift of $z$ to $\zpb/\varpi^{2}$. By the formally smoothness of $\mathcal{N}^{o}$ at $z$, there exists a point $\Tilde{z}\in\widehat{\mathcal{N}}^{o}_{z}(\zpb)$ lifting the point $z^{\prime}\in\widehat{\mathcal{N}}^{o}_{z}(\zpb/p^{2})$. The point $\tilde{z}$ corresponds to an isotropic line $L^{1}$. Let $l\in \mathbf{V}_z$ be a generator of $L^{1}$ whose image in $\vcrysz(\mathbb{F})$ is $\overline{l}$. Let $\{x_{i}\}_{i=1}^{r}$ be an $\zp$-basis of $L$. If $\overline{l}$ belongs to the space $\overline{L}$, there exist $a_i\in\zpb$ for $1\leq i\leq r$ and $v\in\mathbf{V}_z$ such that
    \begin{equation}
        l=\sum\limits_{i=1}^{r}a_ix_{i,\textup{crys},z}+p\cdot v.
        \label{l}
    \end{equation}
    Notice that $\overline{l}=\sum\limits_{i=1}^{r}\overline{a_{i}}\cdot\overline{x_{i,\textup{crys},z}}\neq0$, hence there exists at least one $i$ such that $a_{i}\in \zpb^{\times}$ and $x_{i,\textup{crys},z}\notin p\mathbf{V}_z$. Let $\Phi_z$ be the Frobenius action on $\mathbf{V}_z$. If $v\in L^{0}+p\mathbf{V}_z$, then $\Phi_z(v)\in \mathbf{V}_z$ by Lemma \ref{strongdiv} The same lemma also implies that $\Phi_z(l)\in p\mathbf{V}_z$. Therefore $\Phi_z(\sum\limits_{i=1}^{r}a_ix_{i,\textup{crys},z})=\Phi_z(l)-p\Phi_z(v)\in p\mathbf{V}_z$. However,
    \begin{equation*}
        \Phi_z(\sum\limits_{i=1}^{r}a_ix_{i,\textup{crys},z})=\sum\limits_{i=1}^{r}\sigma(a_{i})\cdot\Phi_z(x_{i,\textup{crys},z})=\sum\limits_{i=1}^{r}\sigma(a_{i})\cdot x_{i,\textup{crys},z},
    \end{equation*}
    hence $\Phi_z(\sum\limits_{i=1}^{r}a_ix_{i,\textup{crys},z})\notin p\mathbf{V}_z$ because there exists an integer $1\leq i\leq r$ such that $\sigma(a_{i})\in \zpb^{\times}$ and $x_{i,\textup{crys},z}\notin p\mathbf{V}_z$, hence $v\notin L^{0}+p\mathbf{V}_z$. Therefore $(l,v)\in \zpb^{\times}$ by Lemma \ref{pairingeq1}.
    \par
   Recall that $\nu_{p}$ is the $p$-adic valuation on $\qpb$. We claim that $\nu_{p}((l,x_{i,\textup{crys},z}))\geq2$ for all $1\leq i\leq r$. Since the point $\Tilde{z}\in\mathcal{N}^{o}(\zpb)$ is a lift of $z^{\prime}$, the image of $l$ in $\vcrysz(\zpb/p^{2})\simeq\mathbf{V}_z/p^{2}\mathbf{V}_z$ generates the line $\textup{Fil}^{1}\vcrysz(\zpb/p^{2})$ corresponding to $z^{\prime}$. By definition, the image of $x_{i,\textup{crys},z}\in\mathbf{V}_z$ in $\vcrysz(\zpb/p^{2})$ is $x_{i,\textup{crys},z}(\zpb/p^{2})$. By Lemma \ref{deformo}, the line $\textup{Fil}^{1}\vcrysz(\zpb/p^{2})$ is orthogonal to $x_{\textup{crys},z}(\zpb/p^{2})$, which is equivalent to $(l,x_{i,\textup{crys},z})=0\in\zpb/(p^{2})$, i.e., $\nu_{p}((l,x_{i,\textup{crys},z}))\geq2$ for all $1\leq i\leq r$.
   Therefore $\nu_{p}(\sum\limits_{i=1}^{r}a_{i}(l,x_{i,\textup{crys},z}))\geq2$. However, the pairing $(l,\sum\limits_{i=1}^{r}a_ix_{i,\textup{crys},z})=(l,l-p\cdot v)=-p(l,v)$ by (\ref{l}), hence $\nu_{p}((l,\sum\limits_{i=1}^{r}a_ix_{i,\textup{crys},z}))=1$ since $(l,v)\in \zpb^{\times}$ by previous discussion, this is a contradiction. Therefore the element $\overline{l}$ doesn't belong to the space $\overline{L}$ if (i) and (ii) are true, hence the formal scheme $\mathcal{Z}(L)$ is formally smooth over $\zpb$ of relative dimension $n-r-1$ at $z$.
\end{proof}
\begin{remark}
    In the above proof, we only use the inclusion part of the strong divisibility Lemma \ref{strongdiv}, i.e., $\Phi_z\left(p^{-1}L^{1}+L^{0}+p\mathbf{V}_z\right)\subset\mathbf{V}_z.$
\end{remark}

\section{Regularity of the difference divisor}
\label{regularity-part}
\subsection{Commutative algebra preparation}
\begin{lemma}
    Let $n\geq2$ be an integer. Let $R=\mathcal{O}[[t_{1},\cdot\cdot\cdot,t_{n-1}]]$ where $\mathcal{O}$ is a discrete valuation ring of characteristic $(0,p)$ with uniformizer $\pi$ and $\pi$-adic valuation $\nu_{\pi}$. The ring $R$ has maximal ideal $\mathfrak{m}_{R}\coloneqq (\pi,t_{1},\cdot\cdot\cdot,t_{n-1})$. In the following the symbol $(\textup{unit})$ means an element in $R^{\times}$.
    \begin{itemize}
        \item[$\bullet$] Let $g\in \mathfrak{m}_{R}$. If for any ring homomorphism $f:R\rightarrow\mathcal{O}$, we have $\nu_{\pi}(f(g))=1$, then $g\equiv (\textup{unit})\cdot\pi \,\,(\textup{mod}\,\,\mathfrak{m}_{R}^{2})$.
        \item[$\bullet$] Let $d\in\mathfrak{m}_{R}\backslash\mathfrak{m}_{R}^{2}$. Let $h$ be an element in $\mathfrak{m}_{R}$ such that $h\nequiv (\textup{unit})\cdot\pi \,\,(\textup{mod}\,\,\mathfrak{m}_{R}^{2})$. Then there exists a ring homomorphism $f:R\rightarrow\mathcal{O}$ such that $f(d)\neq0$ and $\nu_{\pi}(f(h))\geq2$.
    \end{itemize}
    \label{tech1}
\end{lemma}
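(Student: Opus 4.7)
I will prove the two bullets separately. The first is an immediate contrapositive computation. Write $g \equiv \gamma_0\pi + \sum_{i=1}^{n-1} \gamma_i t_i \pmod{\mathfrak{m}_R^2}$ with $\gamma_i \in \mathcal{O}$ well-defined modulo $\pi$. Applying the hypothesis to $f_0: t_i \mapsto 0$ forces $\gamma_0 \not\equiv 0 \pmod \pi$, and applying it to $f_{j,c}: t_j \mapsto c\pi$, $t_i \mapsto 0$ for $i \neq j$ (with arbitrary $c \in \mathcal{O}$) yields $f_{j,c}(g) \equiv (\gamma_0 + c\gamma_j)\pi \pmod{\pi^2}$; for this to have $\pi$-adic valuation exactly $1$ for every $c$, one must have $\gamma_j \equiv 0 \pmod \pi$. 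Hence $g \equiv \gamma_0 \pi \pmod{\mathfrak{m}_R^2}$ with $\gamma_0 \in \mathcal{O}^\times$.

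For the second bullet I split cases by the image $\bar h$ of $h$ in $\mathfrak{m}_R/\mathfrak{m}_R^2 \cong \mathbf{k}\bar\pi \oplus \bigoplus_i \mathbf{k}\bar t_i$. If $\bar h = 0$ (i.e.\ $h \in \mathfrak{m}_R^2$), then $f(h) \in \pi^2\mathcal{O}$ automatically for any continuous $f: R \to \mathcal{O}$, and it suffices to find $f$ with $f(d) \neq 0$; since $\bar d \neq 0$, either $t_i \mapsto 0$ or $t_j \mapsto \pi, t_i \mapsto 0\,(i \neq j)$ works depending on which component of $\bar d$ is nonzero. The only remaining case (after ruling out $\bar h \in \mathbf{k}^\times \bar\pi$) is that $\bar h$ has a nontrivial $\bar t_j$-component; after relabeling, $j = 1$, and Weierstrass preparation in $t_1$ over $A := \mathcal{O}[[t_2, \ldots, t_{n-1}]]$ yields $h = u(t_1 + a)$ with $u \in R^\times$ and $a \in \mathfrak{m}_A = (\pi, t_2, \ldots, t_{n-1})A$. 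Set $\tilde d := d(-a, t_2, \ldots, t_{n-1}) \in A$, the image of $d$ in $R/(h)$.

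If $\tilde d \neq 0$, I will show there exist $(b_2, \ldots, b_{n-1}) \in (\pi\mathcal{O})^{n-2}$ with $\tilde d(b_2, \ldots, b_{n-1}) \neq 0$ by induction on the number of variables, reducing to the one-variable statement that after factoring out the $\pi$-part, a distinguished polynomial of positive degree over $\mathcal{O}$ has only finitely many roots. Then $f: t_1 \mapsto -a(b_2, \ldots, b_{n-1}),\, t_i \mapsto b_i\,(i \geq 2)$ satisfies $f(h) = 0$ and $f(d) \neq 0$. If $\tilde d = 0$, then $h$ divides $d$ in $R$ by Weierstrass division; writing $d = hq$, the fact that $h \in \mathfrak{m}_R$ forces $q(0) \in \mathcal{O}^\times$ (otherwise both $d(0) = h(0)q(0) \in \pi^2\mathcal{O}$ and $\partial d/\partial t_i(0) \in \pi\mathcal{O}$, so $d \in \mathfrak{m}_R^2$, contradicting the hypothesis). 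Hence $d = (\text{unit})\cdot h$, and the specialization $f: t_1 \mapsto -a(0, \ldots, 0) + \pi^2,\, t_i \mapsto 0\,(i \geq 2)$ gives $f(h) = f(u)\pi^2$ of $\pi$-adic valuation exactly $2$, and $f(d) = f(uq)\pi^2 \neq 0$.

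The main obstacle is the sub-case $\tilde d = 0$, where the natural strategy of restricting to $V(h)$ collapses because $V(d) = V(h)$; one must instead perturb slightly off $V(h)$ while keeping $\pi$-adic valuation at least $2$. The key enabling feature is that $h$ cuts out a Cartier divisor via a Weierstrass polynomial of degree one, providing a transverse coordinate $t_1 + a$ in which the perturbation $t_1 \mapsto -a(0) + \pi^2$ can be written down explicitly.
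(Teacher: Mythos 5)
Your proof is correct, and the second bullet is handled by a genuinely different strategy than the paper's. For the first bullet you do essentially what the paper does: expand the linear part of $g$ modulo $\mathfrak{m}_R^2$ and specialize the $t_j$ to force the $\bar\pi$-coefficient to be a unit and the $\bar{t_j}$-coefficients to vanish. For the second bullet the paper splits on whether $d\equiv(\textup{unit})\cdot\pi\pmod{\mathfrak{m}_R^2}$; when it does not, it makes a formal coordinate change so that $d=t_1$ and then produces $f$ directly from the linear coefficients of $h$, reusing the same Jacobian computation as in the first bullet. You instead split on $\bar h$: when $\bar h=0$ the conclusion is immediate, and otherwise you apply Weierstrass preparation in a $t_j$-direction where $\bar h$ has nonzero component to write $h=u(t_1+a)$, and then ask whether the restriction $\tilde d$ of $d$ to the Weierstrass divisor $V(h)$ vanishes identically. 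If $\tilde d\neq 0$, a nonvanishing point of $\tilde d$ in $(\pi\mathcal{O})^{n-2}$ gives $f(h)=0$ exactly; if $\tilde d=0$ you deduce $d=(\textup{unit})\cdot h$ from $d\notin\mathfrak{m}_R^2$ and perturb off $V(h)$ by $\pi^2$ in the transverse coordinate $t_1+a$. Both arguments are valid; the paper's is shorter and more elementary (pure linear algebra on $\mathfrak{m}_R/\mathfrak{m}_R^2$, no Weierstrass theory), while yours isolates and explicitly resolves the geometric obstruction $V(d)=V(h)$ to the naive restriction argument. If written up, two details deserve spelling out: the induction showing a nonzero $\tilde d\in\mathcal{O}[[t_2,\ldots,t_{n-1}]]$ has a nonvanishing point in $(\pi\mathcal{O})^{n-2}$ (in one variable, divide off the $\pi$-content, then Weierstrass gives a polynomial with finitely many roots), and the case split in the $\bar h=0$ step, where one should use $t_i\mapsto 0$ when the $\bar\pi$-part of $\bar d$ is nonzero and $t_j\mapsto\pi$ when that $\bar\pi$-part vanishes but a $\bar{t_j}$-part does not.
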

\begin{proof}
    Notice that any ring homomorphism $f:R\rightarrow\mathcal{O}$ maps $t_i$ into the maximal ideal $(\pi)\subset\mathcal{O}$.
    For the first assertion, let $g\equiv a_{0}\pi+\sum\limits_{j=1}^{n-1}a_{j}t_{j}\,\,(\textup{mod}\,\,\mathfrak{m}_{R}^{2})$ where $a_{j}\in\mathcal{O}$ for every $0\leq j \leq n-1$. If $\nu_{\pi}(a_{0})\geq1$, consider the continuous homomorphism $f:R\rightarrow\mathcal{O}$ such that $f(t_{j})=\pi^{2}$ for every $0\leq j \leq n-1$, then $\nu_{\pi}(f(g))\geq2$, which is a contradiction. Therefore $\nu_{\pi}(a_{0})=0$. If $g\nequiv (\textup{unit})\cdot\pi \,\,(\textup{mod}\,\,\mathfrak{m}_{R}^{2})$, then there exists at least one $j\geq1$ such that $\nu_{\pi}(a_{j})=0$. Suppose that $\nu_{\pi}(a_{j_{0}})=0$ for some $j_{0}\geq1$, consider the continuous homomorphism $f:R\rightarrow\mathcal{O}$ such that $f(t_{j_{0}})=-a_{j_{0}}^{-1}a_{0}\pi$ and $f(t_{j})=\pi^{2}$ for $j \neq j_{0}$, then $\nu_{\pi}(f(g))\geq2$, which is a contradiction, hence $g\equiv (\textup{unit})\cdot\pi \,\,(\textup{mod}\,\,\mathfrak{m}_{R}^{2})$.
    \par
    For the second assertion, we consider the following two cases.
    \begin{itemize}
        \item[Case 1.] $d\equiv (\textup{unit})\cdot\pi \,\,(\textup{mod}\,\,\mathfrak{m}_{R}^{2})$. Since $h\nequiv (\textup{unit})\cdot\pi \,\,(\textup{mod}\,\,\mathfrak{m}_{R}^{2})$, there exists a continuous ring homomorphism $f:R\rightarrow\mathcal{O}$ such that $\nu_{\pi}(f(h))\geq2$ by the proof of the first assertion, for this $f$, we have $\nu_{\pi}(f(d))=1$, hence $f(d)\neq0$.
        \item[Case 2.] $d\nequiv (\textup{unit})\cdot\pi \,\,(\textup{mod}\,\,\mathfrak{m}_{R}^{2})$, then we can choose another system of uniformizers $t_{1},\cdots,t_{n-1}$ such that $d=t_{1}$. Let $h\equiv b_{0}\pi+\sum\limits_{j=1}^{n-1}b_{j}t_{j}\,\,(\textup{mod}\,\,\mathfrak{m}_{R}^{2})$ where $b_{j}\in\mathcal{O}$ for every $0\leq j \leq n-1$. If $\nu_{\pi}(b_{0})\geq1$, consider the continuous homomorphism $f:R\rightarrow\mathcal{O}$ such that $f(t_{j})=\pi^{2}$ for every $0\leq j \leq n-1$, then $\nu_{\pi}(f(h))\geq2$ and $f(d)=f(t_{1})\neq0$. If $\nu_{\pi}(b_{0})=0$, then there exists at least one $j\geq1$ such that $\nu_{\pi}(b_{j})=0$ since $h\nequiv (\textup{unit})\cdot\pi \,\,(\textup{mod}\,\,\mathfrak{m}_{R}^{2})$. Suppose that $\nu_{\pi}(b_{j_{0}})=0$ for some $j_{0}\geq1$, consider the continuous homomorphism $f:R\rightarrow\mathcal{O}$ such that $f(t_{j_{0}})=-b_{j_{0}}^{-1}b_{0}\pi$ and $f(t_{j})=\pi^{2}$ for $j \neq j_{0}$, then $\nu_{\pi}(f(g))\geq2$ and $f(d)=f(t_{1})\neq0$.
    \end{itemize}
\end{proof}
\begin{lemma}
   Let $n\geq2$ be an integer. Let $R=\mathcal{O}[[t_{1},\cdot\cdot\cdot,t_{n-1}]]$ where $\mathcal{O}$ is a discrete valuation ring of characteristic $(0,p)$ with uniformizer $\pi$ and $\pi$-adic valuation $\nu_{\pi}$. Let $(d_{a})_{a\geq0}$ be a sequence of elements in $R$ such that $d_{a}\in\mathfrak{m}_{R}\coloneqq (\pi,t_{1},\cdot\cdot\cdot,t_{n-1})$ for any $a\geq0$ and $d_{0}\in\mathfrak{m}_{R}\backslash\mathfrak{m}_{R}^{2}$. Let $f_{a}=\prod\limits_{i=0}^{a}d_{i}$ and $\mathcal{Z}(f_{a})\coloneqq\textup{Spf}\,R/f_{a}$ be the closed formal subscheme of $\textup{Spf}\,R$. For any morphism $z:\textup{Spf}\,\mathcal{O}\rightarrow\textup{Spf}\,R$, we use $z^{\sharp}$ to denote the corresponding ring homomorphism $R\rightarrow \mathcal{O}$. If for all $a\geq1$ and all morphisms $z:\textup{Spf}\,\mathcal{O}\rightarrow\textup{Spf}\,R$, there exists a Cartesian diagram
   \begin{equation*}
     \xymatrix{
    \textup{Spec}\,\mathcal{O}/(\pi^{a}\cdot z^{\sharp}(d_{0}))\ar[r]\ar[d]&\textup{Spf}\,\mathcal{O}\ar[d]^{z}\\
    \mathcal{Z}(f_{a})\ar[r]&\textup{Spf}\,R.}
\end{equation*}
then $d_{a}\equiv (\textup{unit})\cdot\pi \,\,(\textup{mod}\,\,\mathfrak{m}_{R}^{2})$ for all $a\geq 1$.
\label{technical}
\end{lemma}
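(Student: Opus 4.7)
The strategy is to translate the Cartesian diagram hypothesis into a constraint on the $\pi$-adic valuations of specializations of $d_1, \ldots, d_a$, and then apply the second bullet of Lemma~\ref{tech1} to rule out the would-be counterexamples.

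First I would unravel the Cartesian diagram. A morphism $z:\textup{Spf}\,\mathcal{O}\to\textup{Spf}\,R$ corresponds to a continuous ring homomorphism $z^{\sharp}:R\to\mathcal{O}$, and the fiber product $\mathcal{Z}(f_{a})\times_{\textup{Spf}\,R}\textup{Spf}\,\mathcal{O}$ is $\textup{Spec}(\mathcal{O}/z^{\sharp}(f_{a})\mathcal{O})$. The Cartesian diagram assumption is therefore equivalent to the equality of ideals $(z^{\sharp}(f_{a}))=(\pi^{a}\cdot z^{\sharp}(d_{0}))$ in $\mathcal{O}$. Writing $z^{\sharp}(f_{a})=z^{\sharp}(d_{0})\prod_{i=1}^{a}z^{\sharp}(d_{i})$ and using that each $d_{i}\in\mathfrak{m}_{R}$ forces $\nu_{\pi}(z^{\sharp}(d_{i}))\geq 1$, this gives the following key consequence: for every $z$ with $z^{\sharp}(d_{0})\neq 0$, one has $\nu_{\pi}(z^{\sharp}(d_{i}))=1$ for each $1\leq i\leq a$.

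Next I would argue by contradiction. Fix $a\geq 1$ and suppose $d_{a}\not\equiv(\textup{unit})\cdot\pi\pmod{\mathfrak{m}_{R}^{2}}$. Since $d_{0}\in\mathfrak{m}_{R}\setminus\mathfrak{m}_{R}^{2}$, the second bullet of Lemma~\ref{tech1}, applied with $d=d_{0}$ and $h=d_{a}$, produces a ring homomorphism $f:R\to\mathcal{O}$ with $f(d_{0})\neq 0$ and $\nu_{\pi}(f(d_{a}))\geq 2$. Such an $f$ sends each $t_{j}$ into $(\pi)$ (as noted in the opening line of the proof of Lemma~\ref{tech1}), so it is continuous and corresponds to a morphism $z:\textup{Spf}\,\mathcal{O}\to\textup{Spf}\,R$. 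Applying the preceding paragraph to this particular $z$ forces $\nu_{\pi}(z^{\sharp}(d_{a}))=1$, contradicting $\nu_{\pi}(f(d_{a}))\geq 2$. We conclude that $d_{a}\equiv(\textup{unit})\cdot\pi\pmod{\mathfrak{m}_{R}^{2}}$ for every $a\geq 1$.

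There is essentially no obstacle once Lemma~\ref{tech1} is in hand; the only point that requires care is the compatibility between abstract ring homomorphisms and morphisms of formal schemes, which is automatic since any such homomorphism $R\to\mathcal{O}$ must send the $t_{j}$ into $(\pi)$ for the power series to converge.
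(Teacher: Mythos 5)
Your proof is correct and follows essentially the same strategy as the paper: translate the Cartesian diagram into the ideal equality $(z^{\sharp}(f_{a}))=(\pi^{a}\cdot z^{\sharp}(d_{0}))$ in $\mathcal{O}$, then use the second bullet of Lemma~\ref{tech1} to produce a specialization $z$ that contradicts it. The only cosmetic difference is that the paper argues by choosing the \emph{minimal} offending index $k$ and comparing the resulting valuation of $z^{\sharp}(f_{k})$ with $k+\nu_{\pi}(z^{\sharp}(d_{0}))$, whereas you observe directly that the hypothesis for a fixed $a$ forces $\nu_{\pi}(z^{\sharp}(d_{i}))=1$ simultaneously for all $1\le i\le a$ whenever $z^{\sharp}(d_{0})\neq 0$ (since $a$ terms each $\ge 1$ must sum to $a$), a mild streamlining of the same idea.
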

\begin{proof}
     Let's assume the contrary that there exists $a\geq1$ such that $d_{a}\nequiv (\textup{unit})\cdot\pi \,\,(\textup{mod}\,\,\mathfrak{m}_{R}^{2})$. Let $k\geq1$ be the least integer such that $d_{i}\equiv (\textup{unit})\cdot\pi \,\,(\textup{mod}\,\,\mathfrak{m}_{R}^{2})$ for all $1\leq i< k$ and $d_{k}\nequiv(\textup{unit})\cdot\pi \,\,(\textup{mod}\,\,\mathfrak{m}_{R}^{2})$, then by Lemma \ref{tech1} there exists a morphism $z:\textup{Spf}\,\mathcal{O}\rightarrow\textup{Spf}\,R$ such that $z^{\sharp}(d_{0})\neq0$ and $\nu_{\pi}(z^{\sharp}(d_{k}))\geq2$, then $\mathcal{Z}(f_{k})\times_{\textup{Spf}\,R,z}\textup{Spf}\,\mathcal{O}$ is cut out in $\textup{Spf}\,\mathcal{O}$ by $(\pi^{m})$ for some integer $m$ such that $ k+\nu_{\pi}(z^{\sharp}(d_{0}))+1\leq m<\infty$, hence the fiber product $\mathcal{Z}(f_{k})\times_{\textup{Spf}\,R,z}\textup{Spf}\,\mathcal{O}$ is not isomorphic to $\textup{Spec}\,\mathcal{O}/(\pi^{k}\cdot z^{\sharp}(d_{0}))$, which is a contradiction. Therefore such $k$ doesn't exist, hence $d_{a}\equiv (\textup{unit})\cdot\pi \,\,(\textup{mod}\,\,\mathfrak{m}_{R}^{2})$ for all $a\geq 1$.
\end{proof}

\subsection{Local equations of special divisors}
In the following, we always fix a geometric point $z\in\mathcal{N}^{u}(\mathbb{F})$ (resp. $\mathcal{N}^{o}(\mathbb{F})$). Denote by $\widehat{\mathcal{N}}^{u}_z$ (resp. $\widehat{\mathcal{N}}^{o}_z$) the formal completion of the formal scheme $\mathcal{N}^{u}$ (resp. $\mathcal{N}^{o}$) at the point $z$. Denote by $\widehat{\mathcal{O}}_{\mathcal{N}^{u},z}$ (resp. $\widehat{\mathcal{O}}_{\mathcal{N}^{o},z}$) the complete local ring of the formal scheme $\mathcal{N}^{u}$ (resp. $\mathcal{N}^{o}$) at this point. Denote by $\mathbf{D}$ and $\overline{\mathbf{D}}$ (resp. $\mathbf{V}$) the $\ofb$-module (resp. $\zpb$-module) $\mathbf{D}_z$ and $\overline{\mathbf{D}}_z$ (resp. $\mathbf{V}_z$) defined in $\S$\ref{def-unitary} (resp. $\S$\ref{crystal-over-ortho}).
\begin{proposition}
    Let $x\in\mathbb{V}^{u}$ (resp. $x\in\mathbb{V}^{o}$) be a nonzero special quasi-homomorphism such that $(x,x)\in\mathcal{O}_{E}$ (resp. $q_{\mathbb{V}^{o}}(x)\in \zp$). Let $z\in\mathcal{N}^{u}(\mathbb{F})$ (resp. $z\in\mathcal{N}^{o}(\mathbb{F})$) be a point. Let $\tilde{z}\in\mathcal{N}^{u}(\ofb)$ (resp. $\mathcal{N}^{o}(\zpb)$) be a lift of $z$ to $\ofb$ and $\Tilde{z}^{\sharp}:\widehat{\mathcal{O}}_{\mathcal{N}^{u},z}\rightarrow \ofb$ (resp. $\Tilde{z}^{\sharp}:\widehat{\mathcal{O}}_{\mathcal{N}^{o},z}\rightarrow \zpb$) be the corresponding ring homomorphism. Let $f_{x,z}\in\widehat{\mathcal{O}}_{\mathcal{N}^{u},z}$ (resp. $\widehat{\mathcal{O}}_{\mathcal{N}^{o},z}$) be the equation of the Cartier divisor $\mathcal{Z}(x)$ at $z$. Assume that $x_{\textup{crys},z}\in\mathbf{V}$ (resp. $x_{\textup{crys},z}\in\mathbf{D}$).
    \begin{itemize}
        \item[$\bullet$] In the unitary case, let $\langle\cdot,\cdot\rangle:\mathbf{D}\times\overline{\mathbf{D}}\rightarrow \ofb$ be the perfect pairing. The lift $\Tilde{z}$ corresponds to a line $L^{1}\subset\overline{\mathbf{D}}$. Let $l$ be a generator of $L^{1}$, then 
    \begin{equation}
    (\Tilde{z}^{\sharp}(f_{x,z}))=(\langle x_{\textup{crys},z},l\rangle)\subset \ofb.
    \label{order}
    \end{equation}
    \item[$\bullet$] In the GSpin case, let $(\cdot,\cdot):\mathbf{V}\times\mathbf{V}\rightarrow \zpb$ be the non-degenerate bilinear pairing on $\mathbf{V}$. The lift $\Tilde{z}$ corresponds to an isotropic line $L^{1}\subset\mathbf{V}$. Let $l$ be a generator of $L^{1}$, then 
    \begin{equation}
    (\Tilde{z}^{\sharp}(f_{x,z}))=\left((x_{\textup{crys},z},l)\right)\subset \zpb.
    \label{ordero}
\end{equation}
    \end{itemize}
\label{orderlemma}
\end{proposition}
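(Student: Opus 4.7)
The plan is to reduce the equality of principal ideals in the DVR $\ofb$ (resp.\ $\zpb$) to a comparison of $\varpi$-adic (resp.\ $p$-adic) valuations. Since $\mathcal{Z}(x)$ is an effective Cartier divisor by Proposition \ref{divisor}, locally cut out at $z$ by $f_{x,z}$, the valuation $\nu_{\varpi}(\Tilde{z}^{\sharp}(f_{x,z}))$ equals the supremum of those integers $k \geq 0$ for which the reduction $\Tilde{z}_{k} \colon \textup{Spec}\,\ofb/\varpi^{k} \to \widehat{\mathcal{N}}^{u}_{z}$ factors through $\mathcal{Z}(x)$, with the convention that this value is $+\infty$ if $\Tilde{z}$ itself lies in $\mathcal{Z}(x)$. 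The proposition therefore reduces to showing that this supremum equals $\nu_{\varpi}(\langle x_{\textup{crys},z}, l \rangle)$.

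I will establish the key equivalence that $\Tilde{z}_{k}$ factors through $\mathcal{Z}(x)$ if and only if $\langle x_{\textup{crys},z}, l\rangle \in \varpi^{k}\ofb$ by induction on $k$, driven by Lemma \ref{deformu}(ii). The inductive step from $k$ to $k+1$ applies the lemma to the surjection $\ofb/\varpi^{k+1} \twoheadrightarrow \ofb/\varpi^{k}$, whose kernel is square-zero for $k \geq 1$ and so carries the trivial nilpotent $\mathcal{O}_{F}$-pd-structure. Assuming $\Tilde{z}_{k}$ already factors through $\mathcal{Z}(x)$, part (i) of the lemma identifies the lift $\Tilde{z}_{k+1}$ with the line $L^{1} \otimes \ofb/\varpi^{k+1}$ generated by the reduction of $l$, and part (ii) then says this lift lies in $\widehat{\mathcal{Z}(x)}_{z}(\ofb/\varpi^{k+1})$ if and only if the line is orthogonal to $x_{\textup{crys},z}(\ofb/\varpi^{k+1})$, equivalently $\langle x_{\textup{crys},z}, l\rangle \equiv 0 \pmod{\varpi^{k+1}}$ by base-change compatibility of the perfect pairing. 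The GSpin case runs along the same template with Lemma \ref{deformo} in place of Lemma \ref{deformu} and the bilinear form $(\cdot,\cdot)$ on $\mathbf{V}$ replacing $\langle\cdot,\cdot\rangle$; here the hypothesis $x_{\textup{crys},z} \in \mathbf{V}$ together with Lemma \ref{gspin-simple-k-points} already forces $z \in \mathcal{Z}(x)(\mathbb{F})$, so the induction runs starting from $k = 1$ without further issue.

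The main obstacle is the base case $k = 1$ in the unitary setting when $z \notin \mathcal{Z}(x)(\mathbb{F})$: this situation can genuinely occur, since the hypothesis $x_{\textup{crys},z} \in \mathbf{D}$ is strictly weaker than $z \in \mathcal{Z}(x)(\mathbb{F})$ (as explained in the remark following Lemma \ref{gspin-simple-k-points}), and Lemma \ref{deformu}(ii) is not directly available with $z_{0} = z$. In this regime both ideals ought to be the unit ideal: the left side is clear because $f_{x,z}$ is a unit in $\widehat{\mathcal{O}}_{\mathcal{N}^{u},z}$, and the right side amounts to $\langle \overline{x_{\textup{crys},z}}, \overline{l}\rangle \neq 0$ in $\mathbb{F}$. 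I plan to prove this by a direct Dieudonn\'e-module computation: in the covariant convention Oda's theorem gives $\textup{Fil}^{1}\mathbf{D}_{z}(\mathbb{F}) = V_{z}\overline{\mathbf{D}}_{z}/\varpi\mathbf{D}_{z}$ (the same input used in the proof of Lemma \ref{strongdiv}), and combining this with the identity $\Phi_{z}V_{z} = \varpi$ and the torsion-freeness of $\mathbf{D}_{z}$ produces the chain of equivalences $\overline{x_{\textup{crys},z}} \in \textup{Fil}^{1}\mathbf{D}_{z}(\mathbb{F}) \Longleftrightarrow x_{\textup{crys},z} \in V_{z}\overline{\mathbf{D}}_{z} \Longleftrightarrow \Phi_{z}x_{\textup{crys},z} \in \varpi\overline{\mathbf{D}}_{z}$. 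The last condition fails precisely because $z \notin \mathcal{Z}(x)(\mathbb{F})$ while $x_{\textup{crys},z} \in \mathbf{D}$, and since $\textup{Fil}^{1}\mathbf{D}_{z}(\mathbb{F})$ is the annihilator of $\mathbb{F}\overline{l}$ under the perfect pairing, it follows that $\langle \overline{x_{\textup{crys},z}}, \overline{l}\rangle \neq 0$ as required.
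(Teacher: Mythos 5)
Your argument is correct and rests on the same ingredients as the paper's own proof: the Grothendieck--Messing deformation bijection (Lemma~\ref{deformu}, resp.~\ref{deformo}), the compatibility between $x_{\textup{crys},z}(R)$ and the image of $x_{\textup{crys},z}$ in $\mathbf{D}/\varpi^{k}\mathbf{D}$, and the Dieudonn\'e-module computation for the boundary case $z\notin\mathcal{Z}(x)(\mathbb{F})$. The organization differs. The paper separates into the three cases $m=0$, $m=\infty$, $1\le m<\infty$ (with $m=\nu_\varpi(\Tilde z^\sharp(f_{x,z}))$), and for the latter two applies the deformation lemma directly to the \emph{long} surjections $\ofb/\varpi^{m}\to\mathbb{F}$ and $\ofb/\varpi^{t}\to\mathbb{F}$; this requires the full nilpotent $\mathcal{O}_F$-pd-structure on $(\varpi)/(\varpi^{m})$, which exists because $p>2$. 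You instead iterate over the square-zero surjections $\ofb/\varpi^{k+1}\to\ofb/\varpi^{k}$, which carry the trivial nilpotent pd-structure for free, trading a one-shot argument for a clean induction. The two are close to equivalent but your version has the small advantage of never needing to discuss divided powers beyond the square-zero case, and it makes the monotonicity of the factoring condition in $k$ explicit. Your base case $k=1$ in the unitary setting (via Oda's description of the Hodge filtration and $\Phi_z V_z=\varpi$) is exactly the paper's $m=0$ argument.

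One point deserves to be spelled out more carefully than you do: in the GSpin case you invoke Lemma~\ref{gspin-simple-k-points} to conclude $z\in\mathcal{Z}(x)(\mathbb{F})$ and then say the induction ``runs starting from $k=1$ without further issue,'' but the $k=1$ statement is an equivalence, and you still owe the \emph{other} direction, namely $(\overline{x_{\textup{crys},z}},\overline{l})=0$ in $\mathbb{F}$. This holds because $\overline{x_{\textup{crys},z}}\in\mathbf{V}_{\textup{crys},z}(\mathbb{F})$, being the Dieudonn\'e realization of an actual endomorphism of $X_z$ over $\mathbb{F}$, preserves $\textup{Fil}^{1}\mathbb{D}(X_z)(\mathbb{F})$, while $\overline{l}$ generates $\textup{Fil}^{1}\mathbf{V}_{\textup{crys},z}(\mathbb{F})$ and hence kills $\textup{Fil}^{1}$ and lands in it; a trace computation then gives $(\overline{x_{\textup{crys},z}},\overline{l})=0$. (This is also implicit in the paper, in the assertion $\overline{L}\subset\{\overline{l}\}^{\perp}$ inside the proof of Lemma~\ref{smoothcycleo}, and it is what lets the paper's $m=1$ case of the long-surjection argument go through.) With that one sentence added, your proof is complete and, in my view, slightly cleaner than the original.
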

\begin{proof}
    We first consider the unitary case. Let $\widehat{\mathcal{Z}(x)}_{z}$ be the completion of the special cycle $\mathcal{Z}(x)$ at $z$, there exists a Cartesian diagram,
    \begin{equation*}
        \xymatrix{
    \textup{Spf}\,\ofb/(\Tilde{z}^{\sharp}(f_{x,z}))\ar[r]\ar[d]&\textup{Spf}\,\ofb\ar[d]^{\Tilde{z}}\\
    \widehat{\mathcal{Z}(x)}_{z}\ar[r]&\widehat{\mathcal{N}}^{u}_{z}.}
    \end{equation*}
    Let $m=\nu_{\varpi}(\Tilde{z}^{\sharp}(f_{x,z}))\in\mathbb{Z}_{\geq0}\cup\{\infty\}$, $t=\nu_{\varpi}(\langle x_{\textup{crys},z},l\rangle)\in\mathbb{Z}_{\geq0}\cup\{\infty\}$, the equality in (\ref{order}) is equivalent to $t= m$.
    \par
    If $m=0$, we have $\textup{Spf}\,\ofb/(\Tilde{z}^{\sharp}(f_{x,z}))=\varnothing$, then $f_{x,z}$ is a unit in the ring $\widehat{\mathcal{O}}_{\mathcal{N}^{u},z}$ $z\notin\mathcal{Z}(x)(\mathbb{F})$ because otherwise $\Tilde{z}^{\sharp}(f_{x,z})\in(\varpi)\subset\ofb$. Hence $z\notin\mathcal{Z}(x)(\mathbb{F})$. We want to show that $t=0$. Let's assume the converse, i.e., $t\geq1$. Let $\mathbb{D}(x):\mathbb{D}(\overline{\mathbb{Y}})\otimes_{\ofb}\Breve{F}\rightarrow\mathbb{D}(X_z)\otimes_{\ofb}\Breve{F}$ be the map on the Dieudonne modules induced by the quasi-homomorphism $x$. Denote by $\Phi$ the Frobenius morphism on $\mathbb{D}(\overline{\mathbb{Y}})$ and $\mathbb{D}(X_z)$. Let $\overline{l}\in\overline{\mathbf{D}}/\varpi\overline{\mathbf{D}}$ be the image of $l$. Let $\overline{x_{\textup{crys},z}}$ be the image of $x_{\textup{crys},z}$ in $\mathbf{D}/\varpi\mathbf{D}$. The Hodge filtration of $X_{z}$ is given by $0\subset\mathbb{F}\cdot\overline{l}\oplus\{\overline{l}\}^{\perp}\subset\overline{\mathbf{D}}/\varpi\overline{\mathbf{D}}\oplus\mathbf{D}/\varpi\mathbf{D}$, where $\{\overline{l}\}^{\perp}\subset\mathbf{D}/\varpi\mathbf{D}$ is the kernel of the linear map $\langle\overline{l},\cdot\rangle:\mathbf{D}/\varpi\mathbf{D}\rightarrow\mathbb{F}$. The condition $t\geq1$ implies that $\overline{x_{\textup{crys},z}}\in\{\overline{l}\}^{\perp}$. Therefore $\Phi(x_{\textup{crys},z})\in\varpi\mathbb{D}(X_z)$. Then $\mathbb{D}(x)(\overline{1}_1)=\varpi^{-1}\mathbb{D}(x)(\Phi\overline{1}_0)=\varpi^{-1}\Phi x_{\textup{crys},z}\in\mathbb{D}(X_z)$. Therefore $\mathbb{D}(x)$ maps $\mathbb{D}(\overline{\mathbb{Y}})$ to $\mathbb{D}(X_z)$, i.e., $x$ is a homomorphism by the theory of Dieudonne modules. Therefore $z\in\mathcal{Z}(x)(\mathbb{F})$, which contradicts to the assumption. Hence $t=0$.
    \par
    If $m=\infty$, i.e., the point $\Tilde{z}:\textup{Spf}\,\ofb\rightarrow\widehat{\mathcal{N}}^{u}_{z}$ factors through $\widehat{\mathcal{Z}(x)}_{z}$. If $t$ is a finite number. The morphism $\ofb/(\varpi^{t+1})\rightarrow\mathbb{F}$ is a surjection in $\textup{Art}_{\ofb}$ whose kernel admits a nilpotent $\mathcal{O}_{F}$-pd-structure. The line $\ofb/(\varpi^{t+1})\cdot l$ corresponds to the point $\Tilde{z}_{t+1}:\textup{Spec}\,\ofb/(\varpi^{t+1})\rightarrow\textup{Spf}\,\ofb\stackrel{\Tilde{z}}\rightarrow\widehat{\mathcal{N}}^{u}_{z}$ under the bijection (i) in Lemma \ref{deformu}. It also factors through $\widehat{\mathcal{Z}(x)}_{z}$. By definition, the image of $x_{\textup{crys},z}$ in $\mathbf{D}/\varpi^{t+1}\mathbf{D}$ equals to $x_{\textup{crys},z}(\ofb/(\varpi^{t+1}))$. Therefore $\langle x_{\textup{crys},z}(\ofb/(\varpi^{t+1})),l \rangle\equiv\langle x_{\textup{crys},z},l \rangle\equiv0\,\,\textup{mod}\,\varpi^{t+1}$ by Lemma \ref{deformu}. This contradicts to the fact that $\nu_{\varpi}(\langle x_{\textup{crys},z},l\rangle)=t$. Hence $t=\infty=m$.
    \par
    If $1\leq m<\infty$. The morphism $\ofb/(\varpi^{m})\rightarrow\mathbb{F}$ is a surjection in $\textup{Art}_{\ofb}$ whose kernel admits a nilpotent $\mathcal{O}_{F}$-pd-structure. The line $\ofb/(\varpi^{m})\cdot l$ corresponds to the point $\Tilde{z}_{m}:\textup{Spec}\,\ofb/(\varpi^{m})\rightarrow\textup{Spf}\,\ofb\stackrel{\Tilde{z}}\rightarrow\widehat{\mathcal{N}}^{u}_{z}$ under the bijection (i) in Lemma \ref{deformu}, which factors through $\widehat{\mathcal{Z}(x)}_{z}$. By definition, the image of $x_{\textup{crys},z}$ in $\mathbf{D}/\varpi^{m}\mathbf{D}$ equals to $x_{\textup{crys},z}(\ofb/(\varpi^{m}))$. Therefore $\langle x_{\textup{crys},z}(\ofb/(\varpi^{m})),l \rangle\equiv\langle x_{\textup{crys},z},l \rangle\equiv0\,\,\textup{mod}\,\varpi^{m}$ by Lemma \ref{deformu}. Hence $t\geq m$.
    \par
    On the other hand, the morphism $\ofb/(\varpi^{t})\rightarrow\mathbb{F}$ is a surjection in $\textup{Art}_{\ofb}$ whose kernel admits a nilpotent $\mathcal{O}_{F}$-pd-structure. The line $\ofb/(\varpi)^{t}\cdot l$ corresponds to the point $\Tilde{z}_{t}:\textup{Spec}\,\ofb/(\varpi^{t})\rightarrow\textup{Spf}\,\ofb\stackrel{\Tilde{z}}\rightarrow\widehat{\mathcal{N}}^{u}_{z}$ under the bijection (i) in Lemma \ref{deformu}. The point $\tilde{z}_{t}\in\widehat{\mathcal{N}}^{u}_{z}(\ofb/(\varpi^{t}))$ factors through $\widehat{\mathcal{Z}(x)}_{z}$ because $\langle x_{\textup{crys},z}(\ofb/(\varpi^{t})),l\rangle\equiv\langle x_{\textup{crys},z},l\rangle\equiv 0\,\,(\textup{mod}\,\,\varpi^{t})$ by Lemma \ref{deformu}. Therefore we have a morphism $\textup{Spec}\,\ofb/(\varpi^{t})\rightarrow\widehat{\mathcal{Z}(x)}_{z}\times_{\widehat{\mathcal{N}}^{u}_{z}}\textup{Spf}\,\ofb\simeq\textup{Spec}\,\ofb/(\varpi^{m})$ by the universal property of fiber product, hence $m\geq t$. Therefore we conclude that $m=t$.\\
    \par
    Now we consider the GSpin case, the proof is similar. Let $\widehat{\mathcal{Z}(x)}_{z}$ be the completion of the special cycle $\mathcal{Z}(x)$ at $z$, there exists a Cartesian diagram,
    \begin{equation*}
        \xymatrix{
    \textup{Spf}\,\zpb/(\Tilde{z}^{\sharp}(f_{x,z}))\ar[r]\ar[d]&\textup{Spf}\,\zpb\ar[d]^{\Tilde{z}}\\
    \widehat{\mathcal{Z}(x)}_{z}\ar[r]&\widehat{\mathcal{N}}^{o}_{z}.}
    \end{equation*}
    Let $m=\nu_{p}(\Tilde{z}^{\sharp}(f_{x,z}))\in\mathbb{Z}_{\geq0}\cup\{\infty\}$, $t=\nu_{p}(\langle x_{\textup{crys},z},l\rangle)\in\mathbb{Z}_{\geq0}\cup\{\infty\}$, the equality in (\ref{ordero}) is equivalent to $t= m$.
    \par
    We first show that the number $m$ is always nonzero. If $m=0$, we have $z\notin\mathcal{Z}(x)(\mathbb{F})$ by similar arguments in the unitary case. By Lemma \ref{gspin-simple-k-points}, the assumption $x_{\textup{crys},z}\in\mathbf{V}$ implies that $z\in\mathcal{Z}(x)(\mathbb{F})$, which is a contradiction. Hence $m$ is nonzero.
    \par
    If $m=\infty$, i.e., the point $\Tilde{z}:\textup{Spf}\,\zpb\rightarrow\widehat{\mathcal{N}}^{o}_{z}$ factors through $\widehat{\mathcal{Z}(x)}_{z}$. If $t$ is a finite number. The morphism $\zpb/(p^{t+1})\rightarrow\mathbb{F}$ is a surjection in $\textup{Art}_{\zpb}$ whose kernel admits a nilpotent divided power structure. The isotropic line $\zpb/(p^{t+1})\cdot l$ corresponds to the point $\Tilde{z}_{t+1}:\textup{Spec}\,\zpb/(p^{t+1})\rightarrow\textup{Spf}\,\zpb\stackrel{\Tilde{z}}\rightarrow\widehat{\mathcal{N}}^{o}_{z}$ under the bijection (i) in Lemma \ref{deformo}. It also factors through $\widehat{\mathcal{Z}(x)}_{z}$. By definition, the image of $x_{\textup{crys},z}$ in $\mathbf{V}/p^{t+1}\mathbf{V}$ equals to $x_{\textup{crys},z}(\zpb/(p^{t+1}))$. Therefore $( x_{\textup{crys},z}(\zpb/(p^{t+1})),l)\equiv( x_{\textup{crys},z},l )\equiv0\,\,\textup{mod}\,p^{t+1}$ by Lemma \ref{deformo}. This contradicts to the fact that $\nu_{p}(( x_{\textup{crys},z},l))=t$. Hence $t=\infty=m$.
    \par
    If $m<\infty$. The morphism $\zpb/(p^{m})\rightarrow\mathbb{F}$ is a surjection in $\textup{Art}_{\zpb}$ whose kernel admits a nilpotent divided power structure. The isotropic line $\zpb/(p^{m})\cdot l$ corresponds to the point $\Tilde{z}_{m}:\textup{Spec}\,\zpb/(p^{m})\rightarrow\textup{Spf}\,\zpb\stackrel{\Tilde{z}}\rightarrow\widehat{\mathcal{N}}^{o}_{z}$ under the bijection (i) in Lemma \ref{deformo}, which factors through $\widehat{\mathcal{Z}(x)}_{z}$. By definition, the image of $x_{\textup{crys},z}$ in $\mathbf{V}/p^{m}\mathbf{V}$ equals to $x_{\textup{crys},z}(\zpb/(p^{m}))$. Therefore $( x_{\textup{crys},z}(\zpb/(p^{m})),l )\equiv( x_{\textup{crys},z},l )\equiv0\,\,\textup{mod}\,p^{m}$ by Lemma \ref{deformo}. Hence $t\geq m$.
    \par
    On the other hand, the morphism $\zpb/(p^{t})\rightarrow\mathbb{F}$ is a surjection in $\textup{Art}_{\zpb}$ whose kernel admits a nilpotent divided power structure. The isotropic line $\zpb/(p)^{t}\cdot l$ corresponds to the point $\Tilde{z}_{t}:\textup{Spec}\,\zpb/(p^{t})\rightarrow\textup{Spf}\,\zpb\stackrel{\Tilde{z}}\rightarrow\widehat{\mathcal{N}}^{u}_{z}$ under the bijection (i) in Lemma \ref{deformo}. The point $\tilde{z}_{t}\in\widehat{\mathcal{N}}^{u}_{z}(\zpb/(p^{t}))$ factors through $\widehat{\mathcal{Z}(x)}_{z}$ because $( x_{\textup{crys},z}(\zpb/(p^{t})),l)\equiv( x_{\textup{crys},z},l)\equiv 0\,\,(\textup{mod}\,\,p^{t})$ by Lemma \ref{deformu}. Therefore we have a morphism $\textup{Spec}\,\zpb/(p^{t})\rightarrow\widehat{\mathcal{Z}(x)}_{z}\times_{\widehat{\mathcal{N}}^{u}_{z}}\textup{Spf}\,\zpb\simeq\textup{Spec}\,\zpb/(p^{m})$ by the universal property of fiber product, hence $m\geq t$. Therefore we conclude that $m=t$.\\
\end{proof}
\subsection{Comparison of several quantities}
\begin{definition}
    Let $x\in\mathbb{V}^{u}$ (resp. $x\in\mathbb{V}^{o}$) be a nonzero special quasi-homomorphism such that $(x,x)\in\mathcal{O}_{E}$ (resp. $q_{\mathbb{V}^{o}}(x)\in \zp$). Let $z\in\mathcal{Z}(x)(\mathbb{F})$ be a point. Let $\widehat{\mathcal{N}}_z^{u}$ (resp. $\widehat{\mathcal{N}}_z^{o}$) be the formal completion of $\mathcal{N}^{u}$ (resp. $\mathcal{N}^{o}$) at $z$. Let $\widehat{\mathcal{Z}(x)}_z$ be the formal completion of $\mathcal{Z}(x)$ at $z$. Define
    \begin{align*}
    l_{z,x}&=\textup{the largest number $l$ such that $z\in\mathcal{Z}(\varpi^{-l+1}x)(\mathbb{F})$ (resp. $z\in\mathcal{Z}(p^{-l+1}x)(\mathbb{F})$).}\\
    m_{z,x}&=\textup{the largest number $m$ such that $x_{\textup{crys},z}\in \varpi^{m}\mathbf{D}$ (resp. $x_{\textup{crys},z}\in p^{m}\mathbf{V}$).}\\
    n_{z,x}&=  \textup{the largest number $n$ (could be infinity) such that there exists a point } \\
        &\,\,\,\,\,\,\,\,\,\,\textup{$z^{\prime}\in\widehat{\mathcal{Z}(x)}_z(\ofb/(\varpi^{n}))$ (resp. $z^{\prime}\in\widehat{\mathcal{Z}(x)}_z(\zpb/(p^{n}))$).}\\        
    \end{align*}
    For a point $\Tilde{z}\in\widehat{\mathcal{N}}_z^{u}(\ofb)$ (resp. $\Tilde{z}\in\widehat{\mathcal{N}}_z^{o}(\zpb)$), let $l\in\overline{\mathbf{D}}$ (resp. $l\in\mathbf{V}$) be a generator of the line (resp. isotropic line) $L^{1}\subset\overline{\mathbf{D}}$ (resp. $L^{1}\subset\mathbf{V}$) corresponding to the point $\Tilde{z}$. Define
    \begin{align*}
        t_{x}(\Tilde{z})=\nu_{\varpi}(\langle x_{\textup{crys},z},l\rangle)\,\,\textup{(resp. $\nu_p((x_{\textup{crys},z},l))$)}.
    \end{align*}
\end{definition}
\begin{remark}
    In the set up of the above definition, the smoothness criterion (Lemma \ref{smoothcycleu} and Lemma \ref{smoothcycleo}) of the formal scheme $\mathcal{Z}(x)$ at a point $z\in\mathcal{Z}(x)(\mathbb{F})$ can be interpreted in the following way:
    \begin{itemize}
        \item[$\bullet$] In the unitary case, the formal scheme $\mathcal{Z}(x)$ is formally smooth over $\ofb$ of relative dimension $n-2$ at the point $z$ if and only if $m_{z,x}=0$.
        \item[$\bullet$] In the GSpin case, the formal scheme $\mathcal{Z}(x)$ is formally smooth over $\zpb$ of relative dimension $n-2$ at the point $z$ if and only if $m_{z,x}=0$ and $n_{z,x}\geq2$.
    \end{itemize}
    \label{smooth-new}
\end{remark}
\begin{proposition}
    Let $x\in\mathbb{V}^{u}$ (resp. $x\in\mathbb{V}^{o}$) be a nonzero special quasi-homomorphism such that $(x,x)\in\mathcal{O}_{E}$ (resp. $q_{\mathbb{V}^{o}}(x)\in \zp$). Let $z\in\mathcal{Z}(x)(\mathbb{F})$ be a point. We have
    \begin{align*}
    l_{z,x}&=\min\limits_{\Tilde{z}\in\widehat{\mathcal{N}}_z^{u}(\ofb)}\{t_x(\Tilde{z})\}\,\,\textup{(resp. $\min\limits_{\Tilde{z}\in\widehat{\mathcal{N}}_z^{o}(\ofb)}\{t_x(\Tilde{z})\}$)}.\\
        n_{z,x}&=\max\limits_{\Tilde{z}\in\widehat{\mathcal{N}}_z^{u}(\ofb)}\{t_x(\Tilde{z})\}\,\,\textup{(resp. $\max\limits_{\Tilde{z}\in\widehat{\mathcal{N}}_z^{o}(\ofb)}\{t_x(\Tilde{z})\}$)}.
    \end{align*}
    Moreover, we have $l_{z,x}=m_{z,x}+1$ in the GSpin case, while $l_{z,x}-m_{z,x}$ can be $0$ or $1$ in the unitary case. 
    \label{comparison}
\end{proposition}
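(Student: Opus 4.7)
The strategy is to prove the three assertions in sequence, using Proposition \ref{orderlemma} as the main bridge between the local equation $f_{x,z}$ and the pairing $\langle x_{\textup{crys},z},l\rangle$ (resp.\ $(x_{\textup{crys},z},l)$) with a generator of the line corresponding to a lift $\tilde z$.

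I first handle the comparison of $l_{z,x}$ and $m_{z,x}$. In the GSpin case, Lemma \ref{gspin-simple-k-points} identifies $z\in\mathcal{Z}(p^{-l+1}x)(\mathbb{F})$ with $p^{-l+1}x_{\textup{crys},z}\in\mathbf{V}$, forcing $l_{z,x}=m_{z,x}+1$. In the unitary case, $\varpi^{-l+1}x$ extends to a homomorphism at $z$ iff both $(\varpi^{-l+1}x)_{\textup{crys},z}\in\mathbf{D}$ and $\varpi^{-1}\Phi((\varpi^{-l+1}x)_{\textup{crys},z})\in\overline{\mathbf{D}}$; writing $x_{\textup{crys},z}=\varpi^{m_{z,x}}y$ with $y\in\mathbf{D}\setminus\varpi\mathbf{D}$ and using that $\Phi$ sends $\mathbf{D}$ into $\overline{\mathbf{D}}$, the two conditions produce $l_{z,x}\in\{m_{z,x},m_{z,x}+1\}$, with the ambiguity governed by whether $\Phi(y)\in\varpi\overline{\mathbf{D}}$. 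For $n_{z,x}=\max_{\tilde z}t_x(\tilde z)$, Proposition \ref{orderlemma} gives $t_x(\tilde z)=\nu_\varpi(\tilde z^\sharp(f_{x,z}))$; a point $z'\in\widehat{\mathcal{Z}(x)}_z(\ofb/\varpi^n)$ is a ring map $\widehat{\mathcal{O}}_{\mathcal{N},z}\to\ofb/\varpi^n$ killing $f_{x,z}$, and formal smoothness of $\widehat{\mathcal{N}}_z$ (Theorem \ref{smoothu} and Theorem \ref{smootho}) lifts it to a full $\tilde z\in\widehat{\mathcal{N}}_z(\ofb)$ with $t_x(\tilde z)\geq n$; conversely each such $\tilde z$ yields such a point.

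The main work is the identity $l_{z,x}=\min_{\tilde z}t_x(\tilde z)$. The lower bound follows by applying Proposition \ref{orderlemma} to $\varpi^{-l_{z,x}+1}x$ (whose crystalline realization is integral because $z$ lies in its cycle): the local equation sits in the maximal ideal, so $\tilde z^\sharp$ sends it into $(\varpi)$, and rescaling gives $t_x(\tilde z)\geq l_{z,x}$. For the upper bound, when $l_{z,x}=m_{z,x}$ (possible only in the unitary case), Proposition \ref{orderlemma} applied to $\varpi^{-m_{z,x}}x$ (whose cycle avoids $z$, so its local equation is a unit) shows $t_x\equiv m_{z,x}$. Otherwise $l_{z,x}=m_{z,x}+1$ and I argue by contradiction. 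Set $y=\varpi^{-m_{z,x}}x_{\textup{crys},z}$, with $y\notin\varpi\mathbf{D}$ (resp.\ $y\notin p\mathbf{V}$). If $\nu_\varpi(\langle y,l\rangle)\geq 2$ for every admissible lift $l$, then in the unitary case, letting $l=ul_0+\varpi w$ range over all lifts in $\overline{\mathbf{D}}$ forces $\langle y,\overline{\mathbf{D}}\rangle\subset(\varpi)$, so $y\in\varpi\mathbf{D}$ by the perfect pairing, contradicting $y\notin\varpi\mathbf{D}$. In the GSpin case, varying $l$ over isotropic lifts of $\bar l_0$ forces $\bar y\in\mathbb{F}\bar l_0$ and $(y,l_0)\in(p^2)$; writing $y=c'l_0+py'_0+p^2y'_1$ with $c'\in\zpb^\times$, $y'_0\in L^0$, $y'_1\in\mathbf{V}$, applying $\Phi$ gives $\Phi(y)=\sigma(c')\Phi(l_0)+p\Phi(y'_0)+p^2\Phi(y'_1)$, and the inclusions $\Phi(L^1)\subset p\mathbf{V}$, $\Phi(L^0)\subset\mathbf{V}$, $\Phi(\mathbf{V})\subset p^{-1}\mathbf{V}$ supplied by Lemma \ref{strongdiv} place all three terms in $p\mathbf{V}$, so $\Phi(y)\in p\mathbf{V}$. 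But $y=p^{-m_{z,x}}x\in\mathbf{V}^o$ is $\Phi$-invariant, so $\Phi(y)=y\notin p\mathbf{V}$, a contradiction.

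The hardest step is the GSpin upper bound, which crucially combines the strong divisibility of Lemma \ref{strongdiv} with the defining $\Phi$-invariance of $\mathbf{V}^o$ to rule out the degenerate configuration $\bar y\in\mathbb{F}\bar l_0$ with $(y,l_0)\in(p^2)$ that would otherwise force the minimum to exceed $l_{z,x}$. No such $\Phi$-argument is needed in the unitary setting, where non-degeneracy of the pairing $\langle\mathbf{D},\overline{\mathbf{D}}\rangle$ already suffices.
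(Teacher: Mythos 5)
Your proposal is correct, and while the lower bound $l_{z,x}\leq t_x(\tilde z)\leq n_{z,x}$ and the identity $n_{z,x}=\max_{\tilde z}t_x(\tilde z)$ are handled exactly as in the paper (via Proposition \ref{orderlemma} and formal smoothness of the ambient space), the upper bound $\min_{\tilde z}t_x(\tilde z)\leq l_{z,x}$ is attacked by a genuinely different route. The paper's proof proceeds constructively: starting from an arbitrary lift $\tilde z$ with $t_{x'}(\tilde z)\geq 2$, it perturbs $l_{\tilde z}$ to $l'=l_{\tilde z}+p\,l$ using perfectness of the pairing to drop the order to $1$, and then appeals to Lemma \ref{deformo} to read $l'$ back as a lift $\tilde z'$. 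In the GSpin setting this step is delicate, because Lemma \ref{deformo} parametrizes lifts by \emph{isotropic} lines, and $l'=l_{\tilde z}+p\,l$ need not be isotropic; one can show that when $\bar y$ lies in $\mathbb{F}\bar{l}_0$ there is in fact \emph{no} isotropic lift with order exactly $1$, so the constructive step does not see this configuration. Your contradiction argument closes precisely this case: assuming $\nu_p((y,l))\geq 2$ for every admissible isotropic lift $l$ forces $\bar y\in\mathbb{F}\bar l_0$ and $(y,l_0)\in(p^2)$, and you then invoke the strong-divisibility lemma (Lemma \ref{strongdiv}) together with the $\Phi$-invariance of elements coming from $\mathbb{V}^o$ to conclude $\Phi(y)\in p\mathbf{V}$, contradicting $\Phi(y)=y\notin p\mathbf{V}$. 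This is essentially the same mechanism the paper itself uses in the proof of Lemma \ref{smoothcycleo}, so you are redeploying an existing tool of the paper to give a more robust proof here; in the unitary case you correctly observe that the absence of an isotropy constraint means perfectness of $\langle\mathbf{D},\overline{\mathbf{D}}\rangle$ suffices, matching the paper. Finally, your treatment of $l_{z,x}-m_{z,x}$ is done upfront and directly (Lemma \ref{gspin-simple-k-points} in the GSpin case; the two-condition Dieudonn\'e criterion $x_{\textup{crys},z}\in\mathbf{D}$ and $\varpi^{-1}\Phi(x_{\textup{crys},z})\in\overline{\mathbf{D}}$ in the unitary case), whereas the paper extracts this comparison as a byproduct of the case analysis inside the proof of the $\min$ identity; both derivations are valid and yield the same conclusion.
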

\begin{proof}
We first prove the equality about $n_{z,x}$. Then we prove the equality about $l_{z,x}$ while which the statements about the relationships between $l_{z,x}$ and $m_{z,x}$ will be clear. We separate the unitary and the GSpin case. 
\par
In the unitary case. For a point $\Tilde{z}\in\widehat{\mathcal{N}}_z^{u}(\ofb)$, denote by $l_{\tilde{z}}\in\overline{\mathbf{D}}$ be a generator of the line $L^{1}\subset\overline{\mathbf{D}}$ corresponding to the point $\Tilde{z}$. For an integer $n$, let $f_{\varpi^{n}x,z}\in\widehat{\mathcal{O}}_{\mathcal{N}^{u},z}$ be the local equation of the special divisor $\mathcal{Z}(\varpi^{n}x)$ at $z$. By Proposition \ref{orderlemma}, we have 
\begin{equation*}
    (\Tilde{z}^{\sharp}(f_{\varpi^{n}x}))=(\langle \varpi^{n} x_{\textup{crys},z},l_{\tilde{z}}\rangle)=(\varpi^{n+t_{x}(\Tilde{z})})\subset(\varpi)
\end{equation*}
as long as $z\in\mathcal{Z}(\varpi^{n}x)(\mathbb{F})$. Notice that the point $z\in\mathcal{Z}(\varpi^{-l_{z,x}+1}x)(\mathbb{F})$. Let $n=-l_{z,x}+1$, we get $-l_{z,x}+1+t_{x}(\Tilde{z})\geq1$, i.e., $l_{z,x}\leq t_{x}(\Tilde{z})$. Let $n=0$, the fiber product $\widehat{\mathcal{Z}(x)}_{z}\times_{\widehat{\mathcal{N}}^{u}_{z},\Tilde{z}}\textup{Spf}\,\ofb\simeq\textup{Spec}\,\ofb/(\varpi^{t_{x}(\Tilde{z})})$ defines a point $z^{\prime}\in \widehat{\mathcal{Z}(x)}_{z}(\varpi^{t_{x}(\Tilde{z})})$, hence $n_{z,x}\geq t_x(\Tilde{z})$. Therefore we have proved that $l_{z,x}\leq t_{x}(\Tilde{z})\leq n_{z,x}$ for all points $\Tilde{z}\in\widehat{\mathcal{N}}_z^{u}(\ofb)$. Hence
\begin{equation*}
    l_{z,x}\leq\min\limits_{\Tilde{z}\in\widehat{\mathcal{N}}_z^{u}(\ofb)}\{t_x(\Tilde{z})\}\leq\max\limits_{\Tilde{z}\in\widehat{\mathcal{N}}_z^{u}(\ofb)}\{t_x(\Tilde{z})\}\leq n_{z,x}.
\end{equation*}
\par
We first prove $n_{z,x}=\max\limits_{\Tilde{z}\in\widehat{\mathcal{N}}_z^{u}(\ofb)}\{t_x(\Tilde{z})\}$. Let $M=\max\limits_{\Tilde{z}\in\widehat{\mathcal{N}}_z^{u}(\ofb)}\{t_x(\Tilde{z})\}$. If $n_{z,x}=\infty$. For all integer $n$, there exists a point $z^{\prime}\in\widehat{\mathcal{Z}(x)}_z(\ofb/(\varpi^{n}))$. Let $\Tilde{z}\in\widehat{\mathcal{N}}_z^{u}(\ofb)$ be a lift of $z^{\prime}$ to $\ofb$, the existence of $\Tilde{z}$ follows from the fact that $\widehat{\mathcal{O}}_{\mathcal{N}^{u},z}$ is formally smooth over $\ofb$. Then we have $\langle l_{\Tilde{z}}, x_{\textup{crys},z}\rangle\equiv 0$ mod $\varpi^{n}$ by Lemma \ref{deformu}. Hence $t_{x}(\Tilde{z})=\nu_{\varpi}(\langle l_{\tilde{z}}, x_{\textup{crys},z}\rangle)\geq n$. Therefore $M=\infty=n_{z,x}$. If $n_{z,x}<\infty$, there exists a point $z^{\prime}\in\widehat{\mathcal{Z}(x)}_z(\ofb/(\varpi^{n_{z,x}}))$. Let $\Tilde{z}\in\widehat{\mathcal{N}}_z^{u}(\ofb)$ be a lift of $z^{\prime}$ to $\ofb$. Then we have $\langle l_{\Tilde{z}}, x_{\textup{crys},z}\rangle\equiv 0$ mod $\varpi^{n_{z,x}}$ by Lemma \ref{deformu}. Hence $M\geq n_{z,x}$. We already have $M\leq n_{z,x}$. Hence $n_{z,x}=M=\max\limits_{\Tilde{z}\in\widehat{\mathcal{N}}_z^{u}(\ofb)}\{t_x(\Tilde{z})\}$.
\par
Next we prove $l_{z,x}=\min\limits_{\Tilde{z}\in\widehat{\mathcal{N}}_z^{u}(\ofb)}\{t_x(\Tilde{z})\}$. Let $N=\min\limits_{\Tilde{z}\in\widehat{\mathcal{N}}_z^{u}(\ofb)}\{t_x(\Tilde{z})\}$. Let $x^{\prime}=\varpi^{-l_{z,x}+1}x$. Then we have $z\in\mathcal{Z}(x^{\prime})(\mathbb{F})$ but $z\notin\mathcal{Z}(\varpi^{-1}x^{\prime})(\mathbb{F})$. There are two possibilities.
\begin{itemize}
    \item[(a)] The element $\varpi^{-1}x_{\textup{crys},z}^{\prime}\notin\mathbf{D}$. Let $\Tilde{z}\in\widehat{\mathcal{N}}_z^{u}(\ofb)$ be a point. Notice that $t_{x^{\prime}}(\Tilde{z})\geq1$ since $z\in\mathcal{Z}(x^{\prime})(\mathbb{F})$. If $t_{x^{\prime}}(\Tilde{z})=1$, then $t_{x}(\Tilde{z})=t_{x^{\prime}}(\Tilde{z})+l_{z,x}-1=l_{z,x}$, hence $l_{z,x}\geq N$. If $t_{x^{\prime}}(\Tilde{z})\geq2$, let $l\in\overline{\mathbf{D}}$ be an element such that $\nu_{\varpi}(\langle l,x_{\textup{crys},z}\rangle)=0$, the existence of $l$ follows from the fact that $x_{\textup{crys},z}\notin\varpi\mathbf{D}$ and the pairing $\langle\cdot,\cdot\rangle:\overline{\mathbf{D}}\times\mathbf{D}\rightarrow\ofb$ is perfect. Let $l^{\prime}=l_{\Tilde{z}}+\varpi\cdot l$. Then $\langle l^{\prime}, x_{\textup{crys},z}\rangle=\langle l_{\tilde{z}},x_{\textup{crys},z}\rangle+\varpi\langle l,x_{\textup{crys},z}\rangle$. Hence $\nu_{\varpi}(\langle l^{\prime}, x_{\textup{crys},z}\rangle)=1$. Let $\Tilde{z}^{\prime}\in\widehat{\mathcal{N}}_z^{u}(\ofb)$ be the point corresponding to the line generated by $l^{\prime}$ in Lemma \ref{deformu}. Then $t_{x^{\prime}}(\Tilde{z}^{\prime})=1$, then $t_{x}(\Tilde{z}^{\prime})=t_{x^{\prime}}(\Tilde{z}^{\prime})+l_{z,x}-1=l_{z,x}$, hence $l_{z,x}\geq N$.
\par
Notice that in this case $m_{z,x^{\prime}}=0$, therefore $m_{z,x}=m_{z,x^{\prime}}+l_{z,x}-1=l_{z,x}-1$. We give an example of case (a) in Example \ref{difference1}.
\item[(b)] The element $\varpi^{-1}x_{\textup{crys},z}^{\prime}\in\mathbf{D}$. Let $\Tilde{z}\in\widehat{\mathcal{N}}_z^{u}(\ofb)$ be a point, then $\nu_{\varpi}(\langle l_{\Tilde{z}}, \varpi^{-1}x^{\prime}_{\textup{crys},z}\rangle)=0$ because otherwise $z\in\mathcal{Z}(\varpi^{-1}x^{\prime})(\mathbb{F})$ by Proposition \ref{orderlemma}. Hence $t_{x^{\prime}}(\Tilde{z}^{\prime})=1$, then $t_{x}(\Tilde{z}^{\prime})=t_{x^{\prime}}(\Tilde{z}^{\prime})+l_{z,x}-1=l_{z,x}$, hence $l_{z,x}\geq N$.
\par
In this case we can conclude that $l_{z,x}=m_{z,x}$: the assumption $\varpi^{-1}x_{\textup{crys},z}^{\prime}\in\mathbf{D}$ implies that $m_{z,x^{\prime}}=m_{z,x}-l_{z,x}+1\geq1$, hence $m_{z,x}\geq l_{z,x}$. If $m_{z,x}\geq l_{z,x}\geq1$, we have $\varpi^{-1}x_{\textup{crys},z}^{\prime}\in\varpi\mathbf{D}$. Then $\nu_{\varpi}(\langle l_{\Tilde{z}}, \varpi^{-1}x^{\prime}_{\textup{crys},z}\rangle)\geq1$. Proposition \ref{orderlemma} implies that $z\in\mathcal{Z}(\varpi^{-1}x^{\prime})(\mathbb{F})$, which is a contradiction. Therefore $l_{z,x}=m_{z,x}$. We give an example of case (b) in Example \ref{difference1}.
\end{itemize}
\par
In both cases, we have $l_{z,x}\geq N$. We already have $N\geq l_{z,x}$, hence $l_{z,x}=N=\min\limits_{\Tilde{z}\in\widehat{\mathcal{N}}_z^{u}(\ofb)}\{t_x(\Tilde{z})\}$. Notice that $l_{z,x}-m_{z,x}$ can be $0$ or $1$.
\\
\par
The proof for the GSpin case is basically the same. For a point $\Tilde{z}\in\widehat{\mathcal{N}}_z^{o}(\zpb)$, denote by $l_{\tilde{z}}\in\mathbf{V}$ be a generator of the line $L^{1}\subset\mathbf{V}$ corresponding to the point $\Tilde{z}$. For an integer $n$, let $f_{p^{n}x,z}\in\widehat{\mathcal{O}}_{\mathcal{N}^{o},z}$ be the local equation of the special divisor $\mathcal{Z}(p^{n}x)$ at $z$. By Proposition \ref{orderlemma}, we have 
\begin{equation*}
    (\Tilde{z}^{\sharp}(f_{p^{n}x}))=(( p^{n} x_{\textup{crys},z},l_{\tilde{z}}))=(p^{n+t_{x}(\Tilde{z})})\subset(p)
\end{equation*}
as long as $z\in\mathcal{Z}(p^{n}x)(\mathbb{F})$. Notice that the point $z\in\mathcal{Z}(p^{-l_{z,x}+1}x)(\mathbb{F})$. Let $n=-l_{z,x}+1$, we get $-l_{z,x}+1+t_{x}(\Tilde{z})\geq1$, i.e., $l_{z,x}\leq t_{x}(\Tilde{z})$. Let $n=0$, the fiber product $\widehat{\mathcal{Z}(x)}_{z}\times_{\widehat{\mathcal{N}}^{o}_{z},\Tilde{z}}\textup{Spf}\,\zpb\simeq\textup{Spec}\,\zpb/(p^{t_{x}(\Tilde{z})})$ defines a point $z^{\prime}\in \widehat{\mathcal{Z}(x)}_{z}(p^{t_{x}(\Tilde{z})})$, hence $n_{z,x}\geq t_x(\Tilde{z})$. Therefore we have proved that $l_{z,x}\leq t_{x}(\Tilde{z})\leq n_{z,x}$ for all points $\Tilde{z}\in\widehat{\mathcal{N}}_z^{o}(\zpb)$. Hence
\begin{equation*}
    l_{z,x}\leq\min\limits_{\Tilde{z}\in\widehat{\mathcal{N}}_z^{o}(\zpb)}\{t_x(\Tilde{z})\}\leq\max\limits_{\Tilde{z}\in\widehat{\mathcal{N}}_z^{o}(\zpb)}\{t_x(\Tilde{z})\}\leq n_{z,x}.
\end{equation*}
\par
We first prove $n_{z,x}=\max\limits_{\Tilde{z}\in\widehat{\mathcal{N}}_z^{o}(\zpb)}\{t_x(\Tilde{z})\}$. Let $M=\max\limits_{\Tilde{z}\in\widehat{\mathcal{N}}_z^{o}(\zpb)}\{t_x(\Tilde{z})\}$. If $n_{z,x}=\infty$. For all integers $n$, there exists a point $z^{\prime}\in\widehat{\mathcal{Z}(x)}_z(\zpb/(p^{n}))$. Let $\Tilde{z}\in\widehat{\mathcal{N}}_z^{o}(\zpb)$ be a lift of $z^{\prime}$ to $\zpb$, the existence of $\Tilde{z}$ follows from the fact that $\widehat{\mathcal{O}}_{\mathcal{N}^{o},z}$ is formally smooth over $\zpb$. Then we have $( l_{\Tilde{z}}, x_{\textup{crys},z})\equiv 0$ mod $p^{n}$ by Lemma \ref{deformu}. Hence $t_{x}(\Tilde{z})=\nu_{p}(( l_{\tilde{z}}, x_{\textup{crys},z}))\geq n$. Therefore $M=\infty=n_{z,x}$. If $n_{z,x}<\infty$, there exists a point $z^{\prime}\in\widehat{\mathcal{Z}(x)}_z(\zpb/(p^{n_{z,x}}))$. Let $\Tilde{z}\in\widehat{\mathcal{N}}_z^{o}(\zpb)$ be a lift of $z^{\prime}$ to $\zpb$. Then we have $( l_{\Tilde{z}}, x_{\textup{crys},z})\equiv 0$ mod $p^{n_{z,x}}$ by Lemma \ref{deformu}. Hence $M\geq n_{z,x}$. We already have $M\leq n_{z,x}$. Hence $n_{z,x}=M=\max\limits_{\Tilde{z}\in\widehat{\mathcal{N}}_z^{o}(\zpb)}\{t_x(\Tilde{z})\}$.
\par
Next we prove $l_{z,x}=\min\limits_{\Tilde{z}\in\widehat{\mathcal{N}}_z^{o}(\zpb)}\{t_x(\Tilde{z})\}$. Let $N=\min\limits_{\Tilde{z}\in\widehat{\mathcal{N}}_z^{o}(\zpb)}\{t_x(\Tilde{z})\}$. Let $x^{\prime}=p^{-l_{z,x}+1}x$. Then we have $z\in\mathcal{Z}(x^{\prime})(\mathbb{F})$ but $z\notin\mathcal{Z}(p^{-1}x^{\prime})(\mathbb{F})$. We must have the element $p^{-1}x_{\textup{crys},z}^{\prime}\notin\mathbf{V}$ because otherwise $z\in\mathcal{Z}(p^{-1}x^{\prime})(\mathbb{F})$ by Lemma \ref{gspin-simple-k-points}.
\par
Let $\Tilde{z}\in\widehat{\mathcal{N}}_z^{o}(\zpb)$ be a point. Notice that $t_{x^{\prime}}(\Tilde{z})\geq1$ since $z\in\mathcal{Z}(x^{\prime})(\mathbb{F})$. If $t_{x^{\prime}}(\Tilde{z})=1$, then $t_{x}(\Tilde{z})=t_{x^{\prime}}(\Tilde{z})+l_{z,x}-1=l_{z,x}$, hence $l_{z,x}\geq N$. If $t_{x^{\prime}}(\Tilde{z})\geq2$, let $l\in\mathbf{V}$ be an element such that $\nu_{p}(( l,x_{\textup{crys},z}))=0$, the existence of $l$ follows from the fact that $x_{\textup{crys},z}\notin\mathbf{V}$ and the pairing $(\cdot,\cdot):\mathbf{V}\times\mathbf{V}\rightarrow\zpb$ is perfect. Let $l^{\prime}=l_{\Tilde{z}}+p\cdot l$. Then $( l^{\prime}, x_{\textup{crys},z})=( l_{\tilde{z}},x_{\textup{crys},z})+p(l,x_{\textup{crys},z})$. Hence $\nu_{p}(( l^{\prime}, x_{\textup{crys},z}))=1$. Let $\Tilde{z}^{\prime}\in\widehat{\mathcal{N}}_z^{o}(\zpb)$ be the point corresponding to the line generated by $l^{\prime}$ in Lemma \ref{deformo}. Then $t_{x^{\prime}}(\Tilde{z}^{\prime})=1$, then $t_{x}(\Tilde{z}^{\prime})=t_{x^{\prime}}(\Tilde{z}^{\prime})+l_{z,x}-1=l_{z,x}$, hence $l_{z,x}\geq N$. Since we already have $N\geq l_{z,x}$, we get $l_{z,x}=N=\min\limits_{\Tilde{z}\in\widehat{\mathcal{N}}_z^{o}(\zpb)}\{t_x(\Tilde{z})\}$. 
\par
Notice that $m_{z,x^{\prime}}=0$, therefore $m_{z,x}=m_{z,x^{\prime}}+l_{z,x}-1=l_{z,x}-1$.
\end{proof}
\begin{example}
    In the unitary case, the difference $l_{z,x}-m_{z,x}$ can be $0$ (case (b)) or $1$ (case (a)). We give examples for both cases.
    \begin{itemize}
        \item [(a)] Let $n=2$. Let $x$ be a special quasi-homomorphism such that $\nu_{\varpi}((x,x))=0$. Then $\mathcal{N}^{u}\simeq\textup{Spf}\,\ofb[[t]]$. We have $\mathcal{Z}(x)\simeq\simeq\textup{Spf}\,\ofb$ (cf. \cite[Proposition 8.1]{KR11}). It is formally smooth over $\ofb$, hence $m_{z,x}=0$ by Lemma \ref{smoothcycleu}. Therefore $\varpi^{-1}x_{\textup{crys},z}\notin\mathbf{D}$. We have $l_{z,x}=1$ and $m_{z,x}=0$
        \item[(b)] Let $n=2$. Let $x$ be a special quasi-homomorphism such that $\nu_{\varpi}((x,x))=1$. Then $\mathcal{N}^{u}\simeq\textup{Spf}\,\ofb[[t]]$. We have $\mathcal{Z}(x)\simeq\textup{Spf}\,W_1$ (cf. \cite[Proposition 8.1]{KR11}). Here $W_1$ is a ramified extension of $\ofb$ of degree $q+1$. It is not formally smooth over $\ofb$, hence $m_{z,x}\geq1$ by Lemma \ref{smoothcycleu}, i.e., $\varpi^{-1}x_{\textup{crys},z}\in\mathbf{D}$. We have $l_{z,x}=m_{z,x}=1$ by the above proof.
    \end{itemize}
    \label{difference1}
\end{example}
\begin{lemma}
    Let $x\in\mathbb{V}^{u}$ (resp. $x\in\mathbb{V}^{o}$) be a nonzero special quasi-homomorphism such that $(x,x)\in\mathcal{O}_{E}$ (resp. $q_{\mathbb{V}^{o}}(x)\in \zp$). Let $z\in\mathcal{Z}(x)(\mathbb{F})$ be a point. The formal scheme $\mathcal{Z}(x)$ is formally smooth over $\ofb$ (resp. $\zpb$) of relative dimension $n-2$ at the point $z$ if and only if $l_{z,x}=1$ and $n_{z,x}\geq2$.
    \label{smooth-lemma-2}
\end{lemma}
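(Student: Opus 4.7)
The plan is to combine the smoothness criterion of Remark \ref{smooth-new} (phrased in terms of the invariant $m_{z,x}$, together with $n_{z,x}\geq 2$ in the GSpin case) with Proposition \ref{comparison}, which translates $m_{z,x}$ into the more accessible invariant $l_{z,x}$. The GSpin case is then immediate: since $l_{z,x}=m_{z,x}+1$ holds unconditionally there, Remark \ref{smooth-new} becomes verbatim the desired criterion $l_{z,x}=1$ and $n_{z,x}\geq 2$.

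The unitary forward direction is also short. Remark \ref{smooth-new} yields $m_{z,x}=0$, so Proposition \ref{comparison} forces $l_{z,x}\in\{0,1\}$; the hypothesis $z\in\mathcal{Z}(x)(\mathbb{F})$ excludes $l_{z,x}=0$; and formal smoothness of $\mathcal{Z}(x)$ over $\ofb$ automatically lifts $z$ to an $\ofb$-point of $\mathcal{Z}(x)$, so $n_{z,x}=\infty\geq 2$.

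The main obstacle is the unitary backward direction. Assume $l_{z,x}=1$ and $n_{z,x}\geq 2$; the goal is $m_{z,x}=0$. Proposition \ref{comparison} restricts $m_{z,x}\in\{0,1\}$ through its two subcases, so the task reduces to excluding $m_{z,x}=1$. In that excluded scenario we would be in subcase (b) of the proof of Proposition \ref{comparison}: $y\coloneqq\varpi^{-1}x_{\textup{crys},z}$ would lie in $\mathbf{D}\setminus\varpi\mathbf{D}$, while the equality $l_{z,x}=m_{z,x}=1$ gives $z\notin\mathcal{Z}(\varpi^{-1}x)(\mathbb{F})$. The plan is to apply Proposition \ref{orderlemma} with $\varpi^{-1}x$ in place of $x$: since the local equation $f_{\varpi^{-1}x,z}$ is now a unit in $\widehat{\mathcal{O}}_{\mathcal{N}^{u},z}$, the proposition forces $\langle y,l_{\tilde{z}}\rangle\in\ofb^{\times}$ for \emph{every} lift $\tilde{z}\in\widehat{\mathcal{N}}^{u}_z(\ofb)$. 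Consequently $t_x(\tilde{z})=\nu_{\varpi}(\varpi\langle y,l_{\tilde{z}}\rangle)=1$ uniformly in $\tilde{z}$, and then the formula $n_{z,x}=\max_{\tilde{z}}t_x(\tilde{z})$ from Proposition \ref{comparison} collapses to $n_{z,x}=1$, contradicting the hypothesis $n_{z,x}\geq 2$. Thus $m_{z,x}=0$, completing the proof.
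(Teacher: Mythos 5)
Your proof is correct and tracks the paper's argument closely: both proofs go through Remark \ref{smooth-new}, Proposition \ref{comparison}, and Proposition \ref{orderlemma} applied to $\varpi^{-1}x$. The one genuine organizational improvement on your side is that you dispose of the GSpin case immediately from the equality $l_{z,x}=m_{z,x}+1$, whereas the paper runs the contradiction argument for both cases simultaneously (which is overkill for GSpin). In the unitary ``if'' direction you argue the contrapositive of what the paper does: assuming $m_{z,x}\geq 1$, the paper picks a lift $\tilde{z}$ with $t_x(\tilde{z})\geq 2$ (using $n_{z,x}\geq 2$) and shows via Proposition \ref{orderlemma} that $l_{z,x}\geq 2$; you instead use $l_{z,x}=1$ and Proposition \ref{orderlemma} to force $t_x(\tilde z)=1$ for every $\tilde z$, hence $n_{z,x}=1$. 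These are logically the same use of the Cartesian-square identity $(\tilde z^\sharp(f_{\varpi^{-1}x,z}))=(\langle\varpi^{-1}x_{\textup{crys},z},l_{\tilde z}\rangle)$, just contradicting the other half of the hypothesis. Both versions share the implicit assumption (which the paper also makes without comment) that Proposition \ref{orderlemma} may be applied to $\varpi^{-1}x$ even though $(\varpi^{-1}x,\varpi^{-1}x)$ need not lie in $\mathcal{O}_E$; this is harmless here because the proof of that proposition only uses $\varpi^{-1}x_{\textup{crys},z}\in\mathbf{D}$, which is exactly the assumption $m_{z,x}\geq 1$, and the local equation of $\mathcal{Z}(\varpi^{-1}x)$ is a unit near $z$ once $z\notin\mathcal{Z}(\varpi^{-1}x)(\mathbb{F})$.
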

\begin{proof}
    We first prove the ``only if" direction: if the formal scheme $\mathcal{Z}(x)$ is formally smooth over $\ofb$ (resp. $\zpb$) of relative dimension $n-2$ at the point $z$. Then $n_{z,x}=\infty$ by the formally smoothness. We also have $m_{z,x}=0$ by Remark \ref{smooth-new}. By Proposition \ref{comparison}, we have $l_{z,x}=m_{z,x}+1=1$ in the orthogonal case. And $l_{z,x}=0$ or $1$ in the unitary case. However, since $z\in\mathcal{Z}(x)(\mathbb{F})$, we must have $l_{z,x}\geq1$. Therefore in both cases we have $l_{z,x}=1$.
    \par
    Now we prove the ``if" direction. By Remark \ref{smooth-new}, we only need to prove the two conditions $l_{z,x}=1$ and $n_{z,x}\geq2$ imply the equality $m_{z,x}=0$. Let's assume the converse, i.e., $m_{z,x}\geq1$. Therefore $\varpi^{-1}x_{\textup{crys},z}\in\mathbf{D}$ (resp. $p^{-1}x_{\textup{crys},z}\in\mathbf{V}$).
    \par
    Since $n_{z,x}\geq2$, there exists a point $\Tilde{z}\in\mathcal{N}^{u}(\ofb)$ (resp. $\Tilde{z}\in\mathcal{N}^{o}(\zpb)$) such that $t_{x}(\Tilde{z})\geq2$. Let $l\in\overline{\mathbf{D}}$ (resp. $l\in\mathbf{V}$) be a generator of the line $L^{1}\subset\overline{\mathbf{D}}$ (resp. the isotropic line $L^{1}\subset\mathbf{V}$) corresponding to the point $\Tilde{z}\in\mathcal{N}^{u}(\ofb)$ (resp.  $\Tilde{z}\in\mathcal{N}^{o}(\zpb)$), then $\nu_{\varpi}(\langle x_{\textup{crys},z},l\rangle)\geq2$ (resp. $\nu_p((x_{\textup{crys},z},l))\geq2$), hence $\nu_{\varpi}(\langle \varpi^{-1}x_{\textup{crys},z},l\rangle)\geq1$ (resp. $\nu_p((p^{-1}x_{\textup{crys},z},l))\geq1$).
    \par
    Let $f_{\varpi^{-1}x,z}\in\widehat{\mathcal{O}}_{\mathcal{N}^{u},z}$ (resp. $f_{p^{-1}x,z}\in\widehat{\mathcal{O}}_{\mathcal{N}^{o},z}$) be the local equation of $\mathcal{Z}(\varpi^{-1}x)$ (resp. $\mathcal{Z}(p^{-1}x)$) at the point $z$. By Lemma \ref{orderlemma}, the ideal $(\Tilde{z}^{\sharp}(f_{\varpi^{-1}x,z}))=(\langle\varpi^{-1}x_{\textup{crys},z},l\rangle)\subset\ofb$ (resp. $(\Tilde{z}^{\sharp}(f_{p^{-1}x,z}))=((p^{-1}x_{\textup{crys},z},l))\subset\zpb$), hence $f_{\varpi^{-1}x,z}\in\widehat{\mathcal{O}}_{\mathcal{N}^{u},z}$ is not a unit (resp. $f_{p^{-1}x,z}\in\widehat{\mathcal{O}}_{\mathcal{N}^{o},z}$ is not a unit). Therefore $z\in\mathcal{Z}(\varpi^{-1}x)(\mathbb{F})$ (resp. $z\in\mathcal{Z}(p^{-1}x)(\mathbb{F})$), i.e., $l_{z,x}\geq2$, which is a contradiction. Therefore we have $m_{z,x}=0$. Then the formal scheme $\mathcal{Z}(x)$ is formally smooth over $\ofb$ (resp. $\zpb$) of relative dimension $n-2$ at the point $z$ by Lemma \ref{smoothcycleu} (resp. the extra condition $n_{z,x}\geq2$ and Lemma \ref{smoothcycleo}).
\end{proof}
\subsection{Proof of the main results}
\begin{theorem}
Let $x\in\mathbb{V}^{u}$ (resp. $x\in\mathbb{V}^{o}$) be a nonzero special quasi-homomorphism such that $(x,x)\in\mathcal{O}_{E}$ (resp. $q_{\mathbb{V}^{o}}(x)\in \zp$).\begin{itemize}
    \item[(i)] There is an equality $\mathcal{D}(x)(\mathbb{F})=\mathcal{Z}(x)(\mathbb{F})$.
\end{itemize}
Let $z\in\mathcal{N}^{u}(\mathbb{F})$ (resp. $z\in\mathcal{N}^{o}(\mathbb{F})$) be a point such that $z\in\mathcal{Z}(x)(\mathbb{F})$ but $z\notin\mathcal{Z}(\varpi^{-1}x)(\mathbb{F})$ (resp. $z\in\mathcal{Z}(x)(\mathbb{F})$ but $z\notin\mathcal{Z}(p^{-1}x)(\mathbb{F})$).\begin{itemize}
    \item[(ii)] The point $z\in\mathcal{D}(\varpi^{a}x)(\mathbb{F})$ (resp. $z\in\mathcal{D}(p^{a}x)(\mathbb{F})$) for all $a\geq0$.
    \item[(iii)] For all $a\geq1$, the difference divisor $\mathcal{D}(\varpi^{a}x)$ (resp. $\mathcal{D}(p^{a}x)$) is regular but not formally smooth over $\ofb$ (resp. $\zpb$) at $z$.
    \item[(iv)] For $a=0$, the difference divisor $\mathcal{D}(x)$ is regular at $z$, it is formally smooth over $\ofb$ (resp. $\zpb$) at $z$ if and only if $n_{z,x}\geq2$.
\end{itemize}
\label{main-results}
\end{theorem}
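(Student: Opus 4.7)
The plan is to pass from the theorem to a uniform statement about the sequence of local equations $d_a \coloneqq d_{\varpi^a x, z}$ (resp. $d_{p^a x, z}$) at $z$ and apply the commutative algebra input Lemma \ref{technical}. I first work under the standing hypothesis of (ii)-(iv), namely $z \in \mathcal{Z}(x)(\mathbb{F}) \setminus \mathcal{Z}(\varpi^{-1}x)(\mathbb{F})$ (resp. $z \in \mathcal{Z}(x)(\mathbb{F}) \setminus \mathcal{Z}(p^{-1}x)(\mathbb{F})$), so that $l_{z, x} = 1$. Let $R \coloneqq \widehat{\mathcal{O}}_{\mathcal{N}^u, z}$ (resp. $\widehat{\mathcal{O}}_{\mathcal{N}^o, z}$); by Theorem \ref{smoothu} (resp. Theorem \ref{smootho}) this is a formal power series ring in $n - 1$ variables over $\mathcal{O} \coloneqq \ofb$ (resp. $\zpb$). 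Set $f_a \coloneqq \prod_{i=0}^{a} d_i$. Because $f_{\varpi^{-1} x, z}$ (resp. $f_{p^{-1} x, z}$) is a unit in $R$ under the reduction hypothesis, the telescoping identities $d_{\varpi^i x, z} \cdot f_{\varpi^{i-1} x, z} = (\textup{unit}) \cdot f_{\varpi^i x, z}$ yield by induction that $f_a$ and $f_{\varpi^a x, z}$ (resp. $f_{p^a x, z}$) generate the same ideal in $R$; hence $\mathcal{Z}(f_a) = \mathcal{Z}(\varpi^a x)$ (resp. $\mathcal{Z}(p^a x)$) as closed formal subschemes of $\textup{Spf}\,R$.

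Next, I verify the hypotheses of Lemma \ref{technical}. For any lift $\tilde{z}: \textup{Spf}\,\mathcal{O} \to \textup{Spf}\,R$, Proposition \ref{orderlemma} delivers $(\tilde{z}^\sharp(f_{\varpi^a x, z})) = (\varpi^{a + t_x(\tilde{z})})$ and $(\tilde{z}^\sharp(d_0)) = (\tilde{z}^\sharp(f_{x, z})) = (\varpi^{t_x(\tilde{z})})$, so the two ideals $(\tilde{z}^\sharp(f_a))$ and $(\varpi^a \cdot \tilde{z}^\sharp(d_0))$ agree and the required Cartesian square is produced for every $\tilde{z}$. Moreover, $l_{z, x} = 1$ together with Proposition \ref{comparison} supplies a lift $\tilde{z}_0$ with $t_x(\tilde{z}_0) = 1$, so $\tilde{z}_0^\sharp(d_0) \in (\varpi) \setminus (\varpi^2)$; this forces $d_0 \notin \mathfrak{m}_R^2$, since otherwise $\tilde{z}_0^\sharp(d_0)$ would lie in $(\varpi^2)$. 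Lemma \ref{technical} therefore yields the key congruence
\[
d_a \equiv (\textup{unit}) \cdot \varpi \pmod{\mathfrak{m}_R^2}, \qquad \textup{for every } a \geq 1.
\]

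The four assertions now follow by direct inspection. For (ii), $d_a \in \mathfrak{m}_R$ gives $z \in \mathcal{D}(\varpi^a x)(\mathbb{F})$ when $a \geq 1$, and the case $a = 0$ is clear from $d_0 = (\textup{unit}) \cdot f_{x, z} \in \mathfrak{m}_R$. For (iii), $d_a \notin \mathfrak{m}_R^2$ makes $R/(d_a)$ a regular local ring, while the reduction $d_a \bmod \varpi$ lies in $\mathfrak{m}_{R/\varpi}^2$, so Lemma \ref{smoothlemma} rules out formal smoothness of $\mathcal{D}(\varpi^a x)$ over $\mathcal{O}$ for $a \geq 1$. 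For (iv), $\mathcal{D}(x)$ agrees with $\mathcal{Z}(x)$ locally at $z$, its regularity is $d_0 \notin \mathfrak{m}_R^2$, and Lemma \ref{smooth-lemma-2} together with $l_{z, x} = 1$ translates formal smoothness over $\mathcal{O}$ at $z$ into the condition $n_{z, x} \geq 2$. Finally, for (i), given any $z \in \mathcal{Z}(x)(\mathbb{F})$ I set $l \coloneqq l_{z, x}$ and $x' \coloneqq \varpi^{-l + 1} x$ (resp. $p^{-l + 1} x$), so that $z$ satisfies the reduced hypothesis with respect to $x'$; then $d_{x, z}$ generates the same ideal as $d_{\varpi^{l - 1} x', z}$, which lies in $\mathfrak{m}_R$ by the key congruence when $l \geq 2$ and trivially when $l = 1$, giving $z \in \mathcal{D}(x)(\mathbb{F})$. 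The reverse inclusion $\mathcal{D}(x)(\mathbb{F}) \subset \mathcal{Z}(x)(\mathbb{F})$ is the closed immersion $\mathcal{D}(x) \hookrightarrow \mathcal{Z}(x)$ induced by the divisibility $d_{x, z} \mid f_{x, z}$.

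The main obstacle will be the uniform-in-$\tilde{z}$ verification of the Cartesian square in paragraph two. While each valuation count is routine, Proposition \ref{orderlemma} must be invoked for every $\varpi^a x$ (resp. $p^a x$) with $a \geq 0$, which requires the integrality $(\varpi^a x)_{\textup{crys}, z} \in \mathbf{D}_z$ (resp. $\mathbf{V}_z$); this is automatic from Dieudonné theory in the unitary setting and from Lemma \ref{gspin-simple-k-points} in the GSpin setting, and combined with the Grothendieck-Messing-type inputs Lemma \ref{deformu} and Lemma \ref{deformo} is what drives the entire argument.
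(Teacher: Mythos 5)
Your proposal is correct and follows the paper's own strategy: establish $d_0 = d_{x,z} \in \mathfrak{m}_R \setminus \mathfrak{m}_R^2$, verify the Cartesian-square hypothesis of Lemma~\ref{technical} via Proposition~\ref{orderlemma}, conclude $d_{\varpi^a x,z} \equiv (\textup{unit})\cdot \varpi \pmod{\mathfrak{m}_R^2}$ for $a \geq 1$, and read off (ii)--(iv); then derive (i) from (ii) by renormalizing $x$ to $x' = \varpi^{-l_{z',x}+1}x$. The one place where you diverge from the paper is the proof that $d_0 \notin \mathfrak{m}_R^2$: the paper splits into the cases $n_{z,x} \leq 1$ (using Lemma~\ref{tech1} and the sandwich $l_{z,x} \leq t_x(\tilde z) \leq n_{z,x}$) and $n_{z,x} \geq 2$ (using the formal smoothness criterion Lemma~\ref{smooth-lemma-2}), partly because this case distinction also settles the formal smoothness half of (iv) at the same time; you instead give a single direct argument, producing a lift $\tilde z_0$ with $t_x(\tilde z_0) = l_{z,x} = 1$ from Proposition~\ref{comparison} and observing that any local $\ofb$-algebra homomorphism sends $\mathfrak{m}_R^2$ into $(\varpi^2)$, then invoking Lemma~\ref{smooth-lemma-2} separately for (iv). This is a modest streamlining rather than a genuinely different route: it isolates regularity (which needs only $d_0 \notin \mathfrak{m}_R^2$) from formal smoothness (which needs Lemma~\ref{smooth-lemma-2} or the stronger congruence), at the cost of invoking the smoothness criterion a second time for (iv); both arguments rely on the same ingredients, namely Proposition~\ref{orderlemma}, Proposition~\ref{comparison}, and Lemma~\ref{technical}.
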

\noindent\textit{Proof of the unitary case}. We first prove (ii), (iii) and (iv), then come back to prove (i).\par
Let $\mathfrak{m}_{z}=(\varpi,t_{1},\cdots,t_{n-1})$ be the maximal ideal of the complete local ring $\widehat{\mathcal{O}}_{\mathcal{N}^{u},z}$. Notice that the set $\widehat{\mathcal{N}}^{u}_z(\ofb)$ has a bijection to the set of $\ofb$-algebra homomorphisms $\ofb[[t_1,\cdots,t_{n-1}]]\rightarrow\ofb$ by sending a point $\Tilde{z}$ to $\ofb$-algebra homomorphism $\Tilde{z}^{\sharp}:\ofb[[t_1,\cdots,t_{n-1}]]\rightarrow\ofb$.
\par
For $a\geq0$, let $f_{\varpi^{a}x}\in\widehat{\mathcal{O}}_{\mathcal{N}^{u},z}$ be the local equation of the special divisor $\mathcal{Z}(\varpi^{a}x)$ at $z$, and let $d_{\varpi^{a}x}\in\widehat{\mathcal{O}}_{\mathcal{N}^{u},z}$ be the local equation of the difference divisor $\mathcal{D}(\varpi^{a}x)$ at $z$. Notice that the point $z\in\mathcal{D}(\varpi^{a}x)(\mathbb{F})$ if and only if $d_{\varpi^{a}x}\in\mathfrak{m}_z$. Since $z\notin\mathcal{Z}(\varpi^{-1}x)$, we can assume $f_x=d_x$.
\par
We first prove $d_x\in \mathfrak{m}_{z}\backslash\mathfrak{m}_{z}^{2}$, which is equivalent to $f_x\in \mathfrak{m}_{z}\backslash\mathfrak{m}_{z}^{2}$. There are two cases.
\begin{itemize}
    \item[$\bullet$] If $n_{z,x}\leq1$. Notice that $l_{z,x}=1$ since $z\notin\mathcal{Z}(\varpi^{-1}x)(\mathbb{F})$. By Proposition \ref{comparison}, $l_{z,x}\leq t_{x}(\Tilde{z})\leq n_{z,x}$ for all points $\Tilde{z}\in\widehat{\mathcal{N}}^{u}_z(\ofb)$, then we must have $l_{z,x}= t_{x}(\Tilde{z})= n_{z,x}=1$. By Lemma \ref{orderlemma}, the number $\nu_{\varpi}(\Tilde{z}^{\sharp}(f_{x}))=t_x(\Tilde{z})=1$ for all $\ofb$-algebra homomorphism $\Tilde{z}^{\sharp}:\ofb[[t_{1},\cdots,t_{n-1}]]\rightarrow \ofb$, hence $f_{x}\equiv (\textup{unit})\cdot\varpi \,\,(\textup{mod}\,\,\mathfrak{m}_{z}^{2})$ by Lemma \ref{tech1}, then $d_x=f_x\in \mathfrak{m}_{z}\backslash\mathfrak{m}_{z}^{2}$. Notice that in this case the difference divisor $\mathcal{D}(x)$ is regular but not formally smooth over $\ofb$ at $z$.
    \item[$\bullet$] If $n_{z,x}\geq2$. Notice that we already have $l_{z,x}=1$, then by Lemma \ref{smooth-lemma-2} the formal scheme $\mathcal{Z}(x)$ is smooth over $\ofb$ of relative dimension $n-2$ at the point $z$, hence $f_{x}\in \mathfrak{m}_{z}\backslash\mathfrak{m}_{z}^{2}$. Notice that in this case the difference divisor $\mathcal{D}(x)$ is formally smooth over $\ofb$ at $z$.
\end{itemize}
\par
Now we already show that $d_x\in \mathfrak{m}_{z}\backslash\mathfrak{m}_{z}^{2}$. Our next goal is to prove $d_{\varpi^{a}x}\equiv(\textup{unit})\cdot\varpi\,\textup{mod}\,\mathfrak{m}_{z}^{2}$ for all $a\geq1$. Let $\Tilde{z}\in\widehat{\mathcal{N}}^{u}_z(\ofb)$ be a point, by Lemma \ref{orderlemma},
\begin{equation}
    (\Tilde{z}^{\sharp}(f_{\varpi^{a}x}))=(\langle\varpi^{a}\cdot x_{\textup{crys,z}},l\rangle)=(\varpi^{a}\cdot\langle x_{\textup{crys,z}},l\rangle)=(\varpi^{a}\cdot\Tilde{z}^{\sharp}(f_{x})).
    \label{key-cartesian}
\end{equation}
Let $\widehat{\mathcal{Z}(\varpi^{a}x)}_{z}$ be the completion of the formal scheme $\mathcal{Z}(\varpi^{a}x)$ at $z$, there exists a Cartesian diagram by (\ref{key-cartesian}),
\begin{equation*}
     \xymatrix{
    \textup{Spf}\,\ofb/(\varpi^{a}\cdot\Tilde{z}^{\sharp}(f_{x}))\ar[r]\ar[d]&\textup{Spf}\,\ofb\ar[d]^{\tilde{z}}\\
    \widehat{\mathcal{Z}(\varpi^{a}x)}_{z}\ar[r]&\widehat{\mathcal{N}}^{u}_{z}.}
\end{equation*}
Recall that by the definition of difference divisors, we have $f_{\varpi^{a}x}=\prod\limits_{i=0}^{a}d_{\varpi^{i}x}$ for all integer $a\geq0$, hence Lemma \ref{technical} implies that $d_{\varpi^{a}x}\equiv(\textup{unit})\cdot \varpi \,\,(\textup{mod}\,\,\mathfrak{m}_{z}^{2})$ for all $a\geq1$ since $d_{x}\in \mathfrak{m}_{z}\backslash\mathfrak{m}_{z}^{2}$.\par
For all integer $a\geq0$, the point $z\in\mathcal{D}(\varpi^{a}x)(\mathbb{F})$ because the local equation $d_{\varpi^{a}x}\in\mathfrak{m}_z$. Moreover, the local equation $d_{x}\in \mathfrak{m}_{z}\backslash\mathfrak{m}_{z}^{2}$ implies that the difference divisor $\mathcal{D}(\varpi^{a}x)$ is regular at the point $z$.
\par
For $a=0$, we have proved that the difference divisor $\mathcal{D}(x)$ is formally smooth over $\ofb$ at $z$ if and only if $n_{z,x}\geq2$.
For all $a\geq1$, the equation $d_{\varpi^{a}x}\equiv(\textup{unit})\cdot \varpi \,\,(\textup{mod}\,\,\mathfrak{m}_{z}^{2})$, hence the difference divisor $\mathcal{D}(\varpi^{a}x)$ is not formally smooth at the point $z$.
\par
Now we have shown (ii), (iii) and (iv), let's come back to prove (i). The one-side inclusion $\mathcal{D}(x)(\mathbb{F})\subset\mathcal{Z}(x)(\mathbb{F})$ follows from definition. Conversely, if $z^{\prime}\in\mathcal{Z}(x)(\mathbb{F})$. Let $x^{\prime}=\varpi^{-l_{z^{\prime},x}+1}x$, then $z^{\prime}\in\mathcal{Z}(x^{\prime})(\mathbb{F})$ but $z^{\prime}\notin\mathcal{Z}(\varpi^{-1}x^{\prime})(\mathbb{F})$. (ii) implies that $z^{\prime}\in\mathcal{D}(\varpi^{l_{z^{\prime},x}-1}x^{\prime})(\mathbb{F})=\mathcal{D}(x)(\mathbb{F})$. Therefore we conclude that $\mathcal{D}(x)(\mathbb{F})=\mathcal{Z}(x)(\mathbb{F})$.\hfill$\qed$\\\par
\noindent\textit{Proof of the GSpin case}. The proof is similar to the unitary case above. We first prove (ii), (iii) and (iv), then come back to prove (i).\par
Let $\mathfrak{m}_{z}=(p,t_{1},\cdots,t_{n-1})$ be the maximal ideal of the complete local ring $\widehat{\mathcal{O}}_{\mathcal{N}^{o},z}$. Notice that the set $\widehat{\mathcal{N}}^{o}_z(\zpb)$ has a bijection to the set of $\zpb$-algebra homomorphisms $\zpb[[t_1,\cdots,t_{n-1}]]\rightarrow\zpb$ by sending a point $\Tilde{z}$ to $\zpb$-algebra homomorphism $\Tilde{z}^{\sharp}:\zpb[[t_1,\cdots,t_{n-1}]]\rightarrow\zpb$.
\par
For $a\geq0$, let $f_{p^{a}x}\in\widehat{\mathcal{O}}_{\mathcal{N}^{o},z}$ be the local equation of the special divisor $\mathcal{Z}(p^{a}x)$ at $z$, and let $d_{p^{a}x}\in\widehat{\mathcal{O}}_{\mathcal{N}^{o},z}$ be the local equation of the difference divisor $\mathcal{D}(p^{a}x)$ at $z$. Notice that the point $z\in\mathcal{D}(p^{a}x)(\mathbb{F})$ if and only if $d_{p^{a}x}\in\mathfrak{m}_z$. Since $z\notin\mathcal{Z}(p^{-1}x)$, we can assume $f_x=d_x$.
\par
We first prove $d_x\in \mathfrak{m}_{z}\backslash\mathfrak{m}_{z}^{2}$, which is equivalent to $f_x\in \mathfrak{m}_{z}\backslash\mathfrak{m}_{z}^{2}$. There are two cases.
\begin{itemize}
    \item[$\bullet$] If $n_{z,x}\leq1$. Notice that $l_{z,x}=1$ since $z\notin\mathcal{Z}(p^{-1}x)(\mathbb{F})$. By Proposition \ref{comparison}, $l_{z,x}\leq t_{x}(\Tilde{z})\leq n_{z,x}$ for all points $\Tilde{z}\in\widehat{\mathcal{N}}^{o}_z(\zpb)$, then we must have $l_{z,x}= t_{x}(\Tilde{z})= n_{z,x}=1$. By Lemma \ref{orderlemma}, the number $\nu_{p}(\Tilde{z}^{\sharp}(f_{x}))=t_x(\Tilde{z})=1$ for all $\zpb$-algebra homomorphism $\Tilde{z}^{\sharp}:\zpb[[t_{1},\cdots,t_{n-1}]]\rightarrow \zpb$, hence $f_{x}\equiv (\textup{unit})\cdot p \,\,(\textup{mod}\,\,\mathfrak{m}_{z}^{2})$ by Lemma \ref{tech1}, then $d_x=f_x\in \mathfrak{m}_{z}\backslash\mathfrak{m}_{z}^{2}$. Notice that in this case the difference divisor $\mathcal{D}(x)$ is regular but not formally smooth over $\zpb$ at $z$.
    \item[$\bullet$] If $n_{z,x}\geq2$. Notice that we already have $l_{z,x}=1$, then by Lemma \ref{smooth-lemma-2} the formal scheme $\mathcal{Z}(x)$ is smooth over $\zpb$ of relative dimension $n-2$ at the point $z$, hence $f_{x}\in \mathfrak{m}_{z}\backslash\mathfrak{m}_{z}^{2}$. Notice that in this case the difference divisor $\mathcal{D}(x)$ is formally smooth over $\zpb$ at $z$.
\end{itemize}
\par
Now we already show that $d_x\in \mathfrak{m}_{z}\backslash\mathfrak{m}_{z}^{2}$. Our next goal is to prove $d_{p^{a}x}\equiv(\textup{unit})\cdot p\,\textup{mod}\,\mathfrak{m}_{z}^{2}$ for all $a\geq1$. Let $\Tilde{z}\in\widehat{\mathcal{N}}^{o}_z(\zpb)$ be a point, by Lemma \ref{orderlemma},
\begin{equation}
    (\Tilde{z}^{\sharp}(f_{p^{a}x}))=(\langle p^{a}\cdot x_{\textup{crys,z}},l\rangle)=(p^{a}\cdot\langle x_{\textup{crys,z}},l\rangle)=(p^{a}\cdot\Tilde{z}^{\sharp}(f_{x})).
    \label{key-cartesian-o}
\end{equation}
Let $\widehat{\mathcal{Z}(p^{a}x)}_{z}$ be the completion of the formal scheme $\mathcal{Z}(p^{a}x)$ at $z$, there exists a Cartesian diagram by (\ref{key-cartesian-o}),
\begin{equation*}
     \xymatrix{
    \textup{Spf}\,\zpb/(p^{a}\cdot\Tilde{z}^{\sharp}(f_{x}))\ar[r]\ar[d]&\textup{Spf}\,\zpb\ar[d]^{\tilde{z}}\\
    \widehat{\mathcal{Z}(p^{a}x)}_{z}\ar[r]&\widehat{\mathcal{N}}^{o}_{z}.}
\end{equation*}
Recall that by the definition of difference divisors, we have $f_{p^{a}x}=\prod\limits_{i=0}^{a}d_{p^{i}x}$ for all integer $a\geq0$, hence Lemma \ref{technical} implies that $d_{p^{a}x}\equiv(\textup{unit})\cdot p \,\,(\textup{mod}\,\,\mathfrak{m}_{z}^{2})$ for all $a\geq1$ since $d_{x}\in \mathfrak{m}_{z}\backslash\mathfrak{m}_{z}^{2}$.\par
For all integer $a\geq0$, the point $z\in\mathcal{D}(p^{a}x)(\mathbb{F})$ because the local equation $d_{p^{a}x}\in\mathfrak{m}_z$. Moreover, the local equation $d_{x}\in \mathfrak{m}_{z}\backslash\mathfrak{m}_{z}^{2}$ implies that the difference divisor $\mathcal{D}(p^{a}x)$ is regular at the point $z$.
\par
For $a=0$, we have proved that the difference divisor $\mathcal{D}(x)$ is formally smooth over $\zpb$ at $z$ if and only if $n_{z,x}\geq2$.
For all $a\geq1$, the equation $d_{p^{a}x}\equiv(\textup{unit})\cdot p \,\,(\textup{mod}\,\,\mathfrak{m}_{z}^{2})$, hence the difference divisor $\mathcal{D}(p^{a}x)$ is not formally smooth at the point $z$.
\par
Now we have shown (ii), (iii) and (iv), let's come back to prove (i). The one-side inclusion $\mathcal{D}(x)(\mathbb{F})\subset\mathcal{Z}(x)(\mathbb{F})$ follows from definition. Conversely, if $z^{\prime}\in\mathcal{Z}(x)(\mathbb{F})$. Let $x^{\prime}=p^{-l_{z^{\prime},x}+1}x$, then $z^{\prime}\in\mathcal{Z}(x^{\prime})(\mathbb{F})$ but $z^{\prime}\notin\mathcal{Z}(p^{-1}x^{\prime})(\mathbb{F})$. (ii) implies that $z^{\prime}\in\mathcal{D}(p^{l_{z^{\prime},x}-1}x^{\prime})(\mathbb{F})=\mathcal{D}(x)(\mathbb{F})$. Therefore we conclude that $\mathcal{D}(x)(\mathbb{F})=\mathcal{Z}(x)(\mathbb{F})$.\hfill$\qed$
\begin{example}
    We give some examples of the equality $\mathcal{D}(x)(\mathbb{F})=\mathcal{Z}(x)(\mathbb{F})$.
    \begin{itemize}
        \item [(a)] The orthogonal case: Let $H$ be the unique rank $4$ self-dual quadratic lattice over $\zp$ whose discriminant is not a square in $\zp^{\times}$. Let $x\in\mathbb{V}^{o}$ be a non-isotropic vector and $n=\nu_p(q_{\mathbb{V}^{o}}(x))$. Let $x^{\prime}=p^{-[n/2]}\cdot x$. Let $\mathscr{B}$ be the Bruhat-Tits building of $\textup{PGL}_2(\mathbb{Q}_{p^{2}})$. The element $x^{\prime}$ induces an automorphism of $\mathscr{B}$. Let $\mathscr{B}^{x^{\prime}}$ be the fixed set. For a vertex $[\Lambda]$ in the Bruhat-Tits building $\mathscr{B}$ of $\textup{PGL}_2(\mathbb{Q}_{p^{2}})$, let $d_{[\Lambda]}$ be the distance of $[\Lambda]$ to $\mathscr{B}^{x^{\prime}}$.
        \par
        Let $\mathcal{Z}(x)_{\mathbb{F}}=\mathcal{Z}(x)\times_{\zpb}\mathbb{F}$ and $\mathcal{D}(x)_{\mathbb{F}}=\mathcal{D}(x)\times_{\zpb}\mathbb{F}$. By \cite[Theorem 3.6]{Ter11}, we have the following equality of effective Cartier divisors on $\mathcal{N}^{o}_{\mathbb{F}}\coloneqq\mathcal{N}^{o}\times_{W}\mathbb{F}$,
        \begin{equation*}
        \mathcal{Z}(x)_{\mathbb{F}}=\sum\limits_{[\Lambda]:d_{[\Lambda]}\leq(n-1)/2}(1+p+\cdots+p^{(n-1)/2-d_{[\Lambda]}})\cdot\mathbb{P}_{[\Lambda]}+p^{n/2}\cdot s,
        \end{equation*}
        where $\mathbb{P}_{[\Lambda]}$ is a divisor associated to the vertex $[\Lambda]$ and isomorphic to the projective line $\mathbb{P}_{\mathbb{F}}^{1}$. The symbol $s$ represents another divisor which only depends on the element $x^{\prime}$ and is nonempty only if $n$ is even. Since $\mathcal{D}(x)=\mathcal{Z}(x)-\mathcal{Z}(p^{-1}x)$, we have
        \begin{equation*}
            \mathcal{D}(x)_{\mathbb{F}}=\begin{cases}
                \sum\limits_{[\Lambda]:d_{[\Lambda]}=(n-1)/2}\mathbb{P}_{[\Lambda]}+\sum\limits_{[\Lambda]:d_{[\Lambda]}\leq(n-3)/2}p^{(n-1)/2-d_{[\Lambda]}}\cdot\mathbb{P}_{[\Lambda]}+(p^{n/2}-p^{(n-2)/2})\cdot s, &\textup{if $n\geq1$;}\\
                s,&\textup{if $n=0$.}
            \end{cases}
        \end{equation*}
        Therefore $\mathcal{D}(x)(\mathbb{F})=\mathcal{Z}(x)(\mathbb{F})$.
        \item[(b)] The unitary case: Let $n=3$ and $F=\mathbb{Q}_p$. For an element $x\in\mathbb{V}^{u}$, the proof of (\cite[Proposition 2.13]{Ter13a}) implies that if $z\in\mathcal{Z}(x)(\mathbb{F})$, then the local equation of the difference divisor $\mathcal{D}(x)$ is not a unit at the point $z$, hence $\mathcal{Z}(x)(\mathbb{F})\subset\mathcal{D}(x)(\mathbb{F})$. Therefore $\mathcal{D}(x)(\mathbb{F})=\mathcal{Z}(x)(\mathbb{F})$.
    \end{itemize}
    \label{d=z}
\end{example}

\begin{corollary}
Let $x\in\mathbb{V}^{u}$ (resp. $x\in\mathbb{V}^{o}$) be an element such that $(x,x)\in\mathcal{O}_{E}$ (resp. $q_{\mathbb{V}^{o}}(x)\in\mathcal{O}_{F}$).
\begin{itemize}
    \item[(i)] The difference divisor $\mathcal{D}(x)$ is a regular formal scheme.
    \item[(ii)] The formal scheme $\mathcal{D}(x)$ is formally smooth over $\ofb$ (resp. $\zpb$) of relative dimension $n-2$ at a point $z\in\mathcal{D}(x)(\mathbb{F})$ if and only if $l_{z,x}=1$ and $n_{z,x}\geq2$.
    \item[(iii)] Let $z\in\mathcal{Z}(x)(\mathbb{F})$ be a point. Let $\mathcal{D}(x)_{\mathbb{F}}=\mathcal{D}(x)\times_{\ofb}\mathbb{F}$ and $\mathcal{Z}(x)_{\mathbb{F}}=\mathcal{Z}(x)\times_{\ofb}\mathbb{F}$ (resp. $\mathcal{D}(x)_{\mathbb{F}}=\mathcal{D}(x)\times_{\zpb}\mathbb{F}$ and $\mathcal{Z}(x)_{\mathbb{F}}=\mathcal{Z}(x)\times_{\zpb}\mathbb{F}$). Then 
    \begin{equation*}
        \textup{Tgt}_z(\mathcal{D}(x)_{\mathbb{F}})=\textup{Tgt}_z(\mathcal{Z}(x)_{\mathbb{F}}),
    \end{equation*}
    where $\textup{Tgt}_z(\mathcal{D}(x)_{\mathbb{F}})$ and $\textup{Tgt}_z(\mathcal{Z}(x)_{\mathbb{F}})$ are the tangent spaces of the formal scheme $\mathcal{D}(x)_{\mathbb{F}}$ and $\mathcal{Z}(x)_{\mathbb{F}}$ at the point $z$, respectively.
    \end{itemize}
\label{finalproof}
\end{corollary}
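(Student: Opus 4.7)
The plan is to reduce Corollary \ref{finalproof} to Theorem \ref{main-results} by descending along the chain $\mathcal{Z}(x) \supset \mathcal{Z}(\varpi^{-1}x) \supset \mathcal{Z}(\varpi^{-2}x) \supset \cdots$ (resp. the $p^{-i}x$ chain) until the hypothesis $z \notin \mathcal{Z}(\varpi^{-1}x)(\mathbb{F})$ required by Theorem \ref{main-results} is met. Given $z \in \mathcal{D}(x)(\mathbb{F})$, Theorem \ref{main-results}(i) places $z \in \mathcal{Z}(x)(\mathbb{F})$. Setting $l = l_{z,x}$ and $x^{\prime} = \varpi^{-l+1}x$ (resp. $p^{-l+1}x$), the definition of $l_{z,x}$ gives $z \in \mathcal{Z}(x^{\prime})(\mathbb{F})$ and $z \notin \mathcal{Z}(\varpi^{-1}x^{\prime})(\mathbb{F})$, while the identity $\mathcal{D}(x) = \mathcal{D}(\varpi^{l-1}x^{\prime})$ (resp. $\mathcal{D}(p^{l-1}x^{\prime})$) lets me invoke Theorem \ref{main-results} for $x^{\prime}$ with parameter $a = l - 1 \geq 0$.

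Parts (i) and (ii) would then follow by case analysis on $l$. If $l \geq 2$, then $a \geq 1$ and Theorem \ref{main-results}(iii) supplies regularity at $z$ and rules out formal smoothness. If $l = 1$, then $x^{\prime} = x$, hence $n_{z,x^{\prime}} = n_{z,x}$, and Theorem \ref{main-results}(iv) gives regularity together with the criterion $n_{z,x} \geq 2$ for formal smoothness of relative dimension $n-2$. Combining the two cases delivers (i) and the equivalence in (ii).

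For part (iii), I would compare the local equations of $\mathcal{Z}(x)$ and $\mathcal{D}(x)$ inside the power series ring $\widehat{\mathcal{O}}_{\mathcal{N},z} \cong \ofb[[t_1,\ldots,t_{n-1}]]$ (resp. $\zpb[[t_1,\ldots,t_{n-1}]]$). Telescoping $f_{\varpi^{-i}x,z} = d_{\varpi^{-i}x,z}\cdot f_{\varpi^{-i-1}x,z}$ from $i = 0$ through $i = l - 1$, and observing that $f_{\varpi^{-l}x,z}$ is a unit (since $z \notin \mathcal{Z}(\varpi^{-l}x)(\mathbb{F})$ by maximality of $l$), yields $f_{x,z} = (\textup{unit})\cdot\prod_{i=0}^{l-1} d_{\varpi^{-i}x,z}$. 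The case $l = 1$ is trivial: $f_{x,z}$ and $d_{x,z}$ differ by a unit, so $\mathcal{Z}(x)$ and $\mathcal{D}(x)$ coincide as formal subschemes near $z$ and the tangent spaces agree trivially. For $l \geq 2$, the crucial input is the sharp mod-$\mathfrak{m}_z^2$ congruence $d_{\varpi^{-i}x,z} \equiv (\textup{unit})\cdot\varpi \pmod{\mathfrak{m}_z^2}$ (resp. $\equiv (\textup{unit})\cdot p$) for $0 \leq i \leq l - 2$, which is precisely the statement established inside the proof of Theorem \ref{main-results}(iii), applied to $x^{\prime}$ with parameter $a = l - 1 - i \geq 1$. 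Specializing to $i = 0$ shows that the reduction of $d_{x,z}$ modulo $\varpi$ (resp. $p$) lies in the squared maximal ideal of $\mathbb{F}[[t_1,\ldots,t_{n-1}]]$, and the product formula then shows the same for $f_{x,z}$, which is a product of at least two elements of $\mathfrak{m}_z$. Both $\textup{Tgt}_z(\mathcal{D}(x)_{\mathbb{F}})$ and $\textup{Tgt}_z(\mathcal{Z}(x)_{\mathbb{F}})$ therefore equal the full tangent space $\textup{Tgt}_z(\mathcal{N}_{\mathbb{F}})$, which gives (iii).

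The substantive work has already been carried out in Theorem \ref{main-results}, particularly the mod-$\mathfrak{m}_z^2$ analysis of successive difference divisors. The only mild obstacle in the corollary is bookkeeping the telescoping and confirming that the terminal factor $f_{\varpi^{-l}x,z}$ is a unit, both of which are immediate from the definition of $l_{z,x}$; no further deformation-theoretic computation is needed.
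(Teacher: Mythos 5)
Your proposal is correct and follows essentially the same route as the paper: reduce to Theorem \ref{main-results} via $x' = \varpi^{-l_{z,x}+1}x$, split on $l_{z,x}=1$ versus $l_{z,x}\geq2$, and for part (iii) use the congruence $d_{x,z}\equiv(\textup{unit})\cdot\varpi\pmod{\mathfrak{m}_z^2}$ (resp. $\cdot p$) to show the tangent space of $\mathcal{D}(x)_{\mathbb{F}}$ at $z$ is the full tangent space of $\mathcal{N}_{\mathbb{F}}$. The only minor difference is in (iii) for $l_{z,x}\geq 2$: you separately verify that $f_{x,z}\in\mathfrak{m}_z^2$ via the telescoping product, whereas the paper gets $\textup{Tgt}_z(\mathcal{Z}(x)_{\mathbb{F}})=\textup{Tgt}_z(\mathcal{N}_{\mathbb{F}})$ for free from the chain of closed immersions $\mathcal{D}(x)\hookrightarrow\mathcal{Z}(x)\hookrightarrow\mathcal{N}$ once $\mathcal{D}(x)$ is shown to have full tangent space; your extra computation is correct but redundant. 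Also note that the congruence on $d_{x,z}$ needs no appeal to the internals of the proof of Theorem \ref{main-results}(iii): its stated conclusions (regular yet not formally smooth) already force $d_{x,z}\in\mathfrak{m}_z\setminus\mathfrak{m}_z^2$ with no unit linear $t_j$-coefficient, hence $d_{x,z}\equiv(\textup{unit})\cdot\varpi\pmod{\mathfrak{m}_z^2}$.
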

\begin{proof}
Let $x^{\prime}=\varpi^{-l_{z,x}+1}x$ throughout this proof. Notice that $z\in\mathcal{Z}(x^{\prime})(\mathbb{F})$ but $z\notin\mathcal{Z}(\varpi^{-1}x^{\prime})(\mathbb{F})$ (resp. $z\notin\mathcal{Z}(p^{-1}x^{\prime})(\mathbb{F})$).
\par
We first prove (i). Let $z\in\mathcal{D}(x)(\mathbb{F})$ be a point. By Theorem \ref{main-results} (iii) and (iv), the difference divisor $\mathcal{D}(x)=\mathcal{D}(\varpi^{l_{z,x}-1}x^{\prime})$ (resp. $\mathcal{D}(x)=\mathcal{D}(p^{l_{z,x}-1}x^{\prime})$) is regular at $z$. Since $z$ is an arbitrary point, we conclude that the formal scheme $\mathcal{D}(x)$ is regular.
\par
Now we prove (ii). Suppose that the difference divisor $\mathcal{D}(x)$ is formally smooth over $\ofb$ (resp. $\zpb$) at the point $z\in\mathcal{D}(x)(\mathbb{F})$. If $l_{z,x}\geq2$, the difference divisor $\mathcal{D}(x)=\mathcal{D}(\varpi^{l_{z,x}-1}x^{\prime})$ (resp. $\mathcal{D}(x)=\mathcal{D}(p^{l_{z,x}-1}x^{\prime})$) is not smooth over $\ofb$ (resp. $\zpb$) by Theorem \ref{main-results} (ii), hence we must have $l_{z,x}=1$. In the case $l_{z,x}=1$, we get $n_{z,x}\geq2$ by Theorem \ref{main-results} (iii). For the converse direction. Let's assume $l_{z,x}=1$ and $n_{z,x}\geq2$. Theorem \ref{main-results} (iii) implies that the difference divisor $\mathcal{D}(x)$ is formally smooth over $\ofb$ (resp. $\zpb$) at $z$. This concludes the whole proof.
\par
Finally we prove (iii). Let $\widehat{\mathcal{D}(x)}_z$ and $\widehat{\mathcal{Z}(x)}_z$ be the completion of the formal scheme $\mathcal{D}(x)$ and $\mathcal{Z}(x)$ at the point $z$ respectively. By Schlessinger's criterion in \cite[Theorem 2.11]{Sch68},
\begin{equation*}
    \textup{Tgt}_z(\mathcal{D}(x)_{\mathbb{F}})=\widehat{\mathcal{D}(x)}_z(\mathbb{F}[\epsilon]),\,\,\textup{Tgt}_z(\mathcal{Z}(x)_{\mathbb{F}})=\widehat{\mathcal{Z}(x)}_z(\mathbb{F}[\epsilon])
\end{equation*}
where $\mathbb{F}[\epsilon]=\mathbb{F}[X]/(X^2)$. The closed immersion $\mathcal{D}(x)\rightarrow\mathcal{Z}(x)$ implies that there is an injection $\textup{Tgt}_z(\mathcal{D}(x)_{\mathbb{F}})\rightarrow\textup{Tgt}_z(\mathcal{Z}(x)_{\mathbb{F}})$. We want to show that this is bijective. We separate the proof into two cases.
\begin{itemize}
    \item[(a)] The point $z\notin\mathcal{Z}(\varpi^{-1}x)(\mathbb{F})$ (resp. $z\notin\mathcal{Z}(p^{-1}x)(\mathbb{F})$). Then we have $\widehat{\mathcal{D}(x)}_z\simeq\widehat{\mathcal{Z}(x)}_z$. The bijitivity is obvious.
    \item[(b)] The point $z\in\mathcal{Z}(\varpi^{-1}x)(\mathbb{F})$ (resp. $z\in\mathcal{Z}(p^{-1}x)(\mathbb{F})$). Let $d_{x,z}\in\widehat{\mathcal{O}}_{\mathcal{N}^{u},z}$ (resp. $d_{x,z}\in\widehat{\mathcal{O}}_{\mathcal{N}^{o},z}$) be the local equation of the difference divisor. Since $l_{z,x}\geq2$, the difference divisor $\mathcal{D}(x)$ is not formally smooth over $\ofb$ (resp. $\zpb$) at the $z$, hence $d_{x,z}\equiv\varpi$\,mod $\mathfrak{m}_z^{2}$ (resp. $d_{x,z}\equiv p$\,mod $\mathfrak{m}_z^{2}$). Let $\Tilde{z}\in\widehat{\mathcal{N}}^{u}_z(\mathbb{F}[\epsilon])$ (resp. $\Tilde{z}\in\widehat{\mathcal{N}}^{o}_z(\mathbb{F}[\epsilon])$) be a point. Let $\Tilde{z}^{\sharp}:\widehat{\mathcal{O}}_{\mathcal{N}^{u},z}\rightarrow\mathbb{F}[\epsilon]$ (resp. $\Tilde{z}^{\sharp}:\widehat{\mathcal{O}}_{\mathcal{N}^{o},z}\rightarrow\mathbb{F}[\epsilon]$) be the corresponding ring homomorphism. We have $\Tilde{z}^{\sharp}(d_{x,z})=0$ because $\Tilde{z}^{\sharp}(\varpi)=0$ (resp. $\Tilde{z}^{\sharp}(p)=0$) and $\Tilde{z}^{\sharp}(\mathfrak{m}_z^{2})\subset(\epsilon^{2})=0$. Hence $\widehat{\mathcal{D}(x)}_z(\mathbb{F}[\epsilon])=\widehat{\mathcal{N}}^{u}_z(\mathbb{F}[\epsilon])\supset\widehat{\mathcal{Z}(x)}_z(\mathbb{F}[\epsilon])$ (resp. $\widehat{\mathcal{D}(x)}_z(\mathbb{F}[\epsilon])=\widehat{\mathcal{N}}^{o}_z(\mathbb{F}[\epsilon])\supset\widehat{\mathcal{Z}(x)}_z(\mathbb{F}[\epsilon])$). Therefore $\widehat{\mathcal{D}(x)}_z(\mathbb{F}[\epsilon])=\widehat{\mathcal{Z}(x)}_z(\mathbb{F}[\epsilon])$.
\end{itemize}
Therefore we have proved that $\textup{Tgt}_z(\mathcal{D}(x)_{\mathbb{F}})=\textup{Tgt}_z(\mathcal{Z}(x)_{\mathbb{F}})$.
\end{proof}
~\\~\\
\noindent\textbf{Conflict of interest statement.}\,\,Baiqing Zhu declares no conflicts of interest.\\
\noindent\textbf{Data availability statement.}\,\,No new data were created or analysed in this study. Data sharing is not applicable to this article.

\bibliographystyle{alpha}
\bibliography{reference}

\end{document}